\newtheorem*{itheorem}{Theorem}
\theoremstyle{definition}
\newtheorem{definition}{Definition}[section]
\newtheorem{remark}[definition]{Remark}
\theoremstyle{plain}
\newtheorem{lemma}[definition]{Lemma}
\newtheorem{proposition}[definition]{Proposition}
\newtheorem{corollary}[definition]{Corollary}
\newtheorem{theorem}[definition]{Theorem}
\DeclareMathAlphabet{\pazocal}{OMS}{zplm}{m}{n}
\def\calO{\pazocal{O}}
\def\calP{\pazocal{P}}
\def\calQ{\pazocal{Q}}
\def\calS{\pazocal{S}}
\DeclareMathAlphabet{\mathbbold}{U}{bbold}{m}{n}
\def\k{\mathbbold{k}}
\DeclareMathOperator{\Com}{Com}
\DeclareMathOperator{\Perm}{Perm}
\DeclareMathOperator{\NAP}{NAP}
\begin{document}

\title{Distributive lattices of varieties of Novikov algebras}

\author{Vladimir Dotsenko}
\address{Institut de Recherche Math\'ematique Avanc\'ee, UMR 7501, Universit\'e de Strasbourg et CNRS, 7 rue Ren\'e-Descartes, 67000 Strasbourg CEDEX, France}
\email{vdotsenko@unistra.fr}

\author{Bekzat Zhakhayev}
\address{Institute of Mathematics and Mathematical Modeling, Pushkin St. 125, 050010 Almaty, Kazakhstan}
\email{bekzat.kopzhasar@gmail.com}

\dedicatory{To Leonid Arkadievich Bokut on the occasion of his 87th birthday}

\begin{abstract}
We prove that a variety of Novikov algebras has a distributive lattice of subvarieties if and only if the lattice of its subvarieties defined by identities of degree three is distributive, thus answering, in the case of Novikov algebras, a question of Bokut from about fifty years ago. As a byproduct, we classify all Koszul operads with one binary generator of which the Novikov operad is a quotient.
\end{abstract}

\maketitle

\section{Introduction}

Recall that a vector space over a field $\k$ equipped with a bilinear product $x,y\mapsto xy$ is called a \emph{(left) Novikov algebra} if the following identities hold for all $x,y,z\in N$:
\begin{gather*}
(x,y,z)=(x,z,y),\\
x(yz)=y(xz),
\end{gather*}
where $(x,y,z)=(xy)z-x(yz)$ is the associator of $x,y,z$. The term ``Novikov algebra'' was coined by Osborn \cite{MR1163779}. In fact, the identities of Novikov algebras seem to have first appeared in the study of Hamiltonian operators in the formal calculus of variations by Gelfand and Dorfman \cite{MR554407}, and then rediscovered by Balinskii and Novikov in the context of classification of linear Poisson brackets of hydrodynamical type~\cite{MR802121}. 

The main theorem of this article is a classification theorem for varieties of Novikov algebras whose lattice of subvarieties is distributive. The problem of classifying all varieties of algebras with a distributive lattice of subvarieties is recorded in the 1976 edition of Dniester Notebook by L. A. Bokut (see the easily accessible English translation of a later edition \cite[Question 1.179]{MR2203726}). Over a field of zero characteristic, this problem was solved for varieties of associative algebras by Anan'in and Kemer \cite{MR422112} and for varieties of alternative algebras and right-alternative algebras by Martirosyan \cite{MR654648,MR771888}. All those results can be stated in the following appealing way: in each of the cases of associative algebras, alternative algebras, and right-alternative algebras, a variety of algebras of tghat type has a distributive lattice of subvarieties if and only if the lattice of its subvarieties defined by identities of degree three is distributive. Note that this is not at all a general phenomenon: for instance, this is not true for the variety of Lie algebras, where the first obstruction to distributivity appears among identities of degree six. However, our main result asserts that this is the case for varieties of Novikov algebras. Specifically, we prove the following theorem.  
\begin{itheorem}[{Th.~\ref{th:maindistr}}]
The lattice of subvarieties of a variety of Novikov algebras is distributive if and only if all algebras of that variety satisfy the identities
 \[
\alpha a^2a+\beta aa^2,\quad
\gamma((a,a,b)-(b,a,a))+\delta(a(ab)-ba^2)
 \]
for some $((\alpha:\beta),(\gamma:\delta))\in\mathbb{P}^1\times\mathbb{P}^1$.
\end{itheorem}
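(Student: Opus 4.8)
The plan is to translate the lattice problem into operad theory and then into representation theory of the symmetric groups. A variety of Novikov algebras is the same thing as the category of algebras over a quotient operad $\calP$ of the Novikov operad $\mathrm{Nov}$, and the subvarieties of that variety correspond contravariantly to the operadic ideals $I\trianglelefteq\calP$. Since for two ideals one has $(I\cap J)(n)=I(n)\cap J(n)$ and $(I+J)(n)=I(n)+J(n)$ (the sum of ideals again being an ideal), the assignment $I\mapsto(I(n))_{n\ge 1}$ is an embedding of the ideal lattice into the product $\prod_n L(\calP(n))$ of the lattices of $S_n$-submodules of the components. Over a field of characteristic zero each $\calP(n)$ is semisimple, and the submodule lattice of a semisimple module is distributive precisely when the module is multiplicity-free. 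Thus the backbone of the argument is the general statement, which I would record as a lemma: \emph{if every component $\calP(n)$ is multiplicity-free as an $S_n$-module, then the lattice of subvarieties is distributive}, being a sublattice of a product of distributive lattices.

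For the implication from distributivity to the identities I would look at the subvarieties cut out by degree-three identities, which correspond to $S_3$-submodules of $\calP(3)$; a multiplicity-two isotypic component of $\calP(3)$ produces three distinct lines in a two-dimensional multiplicity space and hence a diamond sublattice $M_3$ of subvarieties, so distributivity forces $\calP(3)$ to be multiplicity-free. A direct computation in $\mathrm{Nov}(3)$ then becomes necessary: reducing the twelve multilinear monomials of shapes $(xy)z$ and $x(yz)$ modulo the two multilinearized Novikov identities, I expect to find $\dim\mathrm{Nov}(3)=6$ with $\mathrm{Nov}(3)\cong\mathrm{triv}^{\oplus2}\oplus\mathrm{std}^{\oplus2}$, the sign representation not occurring. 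Consequently a quotient $\calP(3)=\mathrm{Nov}(3)/K$ is multiplicity-free exactly when $K$ meets both the (two-dimensional) trivial isotypic multiplicity space and the (two-dimensional) standard isotypic multiplicity space in at least a line. The trivial isotypic is spanned by the symmetrizations of $a^2a=(aa)a$ and $aa^2=a(aa)$, so cutting it down is exactly imposing $\alpha a^2a+\beta aa^2$ for some $(\alpha:\beta)\in\mathbb{P}^1$; the standard isotypic, of content $a^2b$, is cut down precisely by an identity $\gamma((a,a,b)-(b,a,a))+\delta(a(ab)-ba^2)$ for some $(\gamma:\delta)\in\mathbb{P}^1$. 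Matching these two families of lines with the two $\mathbb{P}^1$ factors gives the equivalence between arity-three multiplicity-freeness and the two displayed identities.

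The substantive implication is that these two identities already force multiplicity-freeness in \emph{every} arity. I would fix $((\alpha:\beta),(\gamma:\delta))$, let $\calP$ be the corresponding quotient of $\mathrm{Nov}$, and study it through shuffle operads and operadic Gröbner bases. The aim is to exhibit, for each parameter value, a quadratic Gröbner basis: this simultaneously proves that $\calP$ is Koszul and furnishes an explicit monomial normal-form basis of each $\calP(n)$, from which the Frobenius character can be read off and checked to be multiplicity-free for all $n$. By the lemma of the first paragraph this yields distributivity, and the advertised classification of the one-binary-generator Koszul operads attached to $\mathrm{Nov}$ falls out of the same Gröbner analysis together with quadratic duality.

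The main obstacle I anticipate is precisely this last step: controlling the Gröbner basis uniformly over $\mathbb{P}^1\times\mathbb{P}^1$. The leading terms and the reductions of the $S$-polynomials depend on $(\alpha:\beta)$ and $(\gamma:\delta)$, and one must verify that no specialization introduces new generators in higher arity, which would both destroy Koszulness and risk creating extra isotypic multiplicities; the finitely many degenerate loci (such as $(\alpha:\beta)=(1:0)$ or $(0:1)$) will need separate treatment. Establishing multiplicity-freeness of $\calP(n)$ for all $n$ and all parameters at once, rather than for a single convenient representative, is where the real work lies.
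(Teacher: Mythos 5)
Your lattice-theoretic frame and the arity-three analysis are sound and agree with the paper: the criterion you re-derive (distributivity of the subvariety lattice is equivalent to every component being a multiplicity-free $S_n$-module) is exactly Proposition \ref{prop:distr}, the decomposition $\mathrm{Nov}(3)\cong V_3^{\oplus 2}\oplus V_{2,1}^{\oplus 2}$ is correct, and your identification of the two families of lines in the multiplicity spaces with the parameters $(\alpha:\beta)$ and $(\gamma:\delta)$ gives the ``only if'' direction just as in Section \ref{sec:mainth}. The genuine gap is in your plan for the substantive implication. You propose to prove multiplicity-freeness of $\calO_\rho(n)$ for all $n$ by exhibiting, for each parameter value, a \emph{quadratic} Gr\"obner basis, which would simultaneously prove Koszulness. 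This cannot work: a quadratic Gr\"obner basis forces Koszulness, but the operads $\calO_\rho$ are non-Koszul for all but two values of $\rho$. The paper proves this explicitly (Proposition \ref{prop:fifth}): for $\rho=((1:-1),(\gamma:\delta))$ with $\delta\ne 0$ the Poincar\'e series is $t+t^2+\tfrac12 t^3+\sum_{k\ge 4}t^k/k!$ and its compositional inverse has a negative coefficient at $t^{11}$, while for generic $\rho$ one has $\calO_\rho(n)=0$ for $n\ge 4$ and the inverse of $t+t^2+\tfrac12 t^3$ has a positive coefficient at $t^{10}$; only $((0:1),(0:1))$ (giving $\NAP^!$) and $((1:-1),(1:0))$ (giving $\Perm$) are Koszul. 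So the ``degenerate loci'' you planned to treat separately are in fact the \emph{only} loci where your method could succeed, and your expectation that the Koszul classification ``falls out of the same Gr\"obner analysis'' is inverted: most quotients in the family fail Koszulness, which is why the paper needs the Ginzburg--Kapranov positivity test and, in Proposition \ref{prop:sixth}, a semicontinuity argument rather than a uniform Gr\"obner basis.

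There is a secondary defect even if you relax to non-quadratic Gr\"obner bases: shuffle-operad normal forms yield dimensions of components but not their $S_n$-characters, since the passage to shuffle operads forgets the symmetric group actions; multiplicity-freeness cannot simply be ``read off'' a monomial basis. The paper's actual route avoids this entirely: it embeds the free Novikov algebra into the free commutative associative differential algebra (Dzhumadildaev--L\"ofwall) and works with Novikov differential monomials, killing orbits of monomials by applying derivations, multiplying by derivatives, and determinant arguments (Theorems \ref{th:rel1} and \ref{th:rel2}, with the induction Lemmas \ref{lm:21conseq} and \ref{lem3}), then verifies that the surviving modules are genuinely nonzero using the concrete algebras $A$ and $B_\delta$ and just three targeted Gr\"obner computations for dimension lower bounds (Corollary \ref{cor:dim}). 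Combining the two one-parameter analyses in Theorem \ref{th:rel12} shows each $\calO_\rho(n)$, $n\ge 4$, is $0$, $V_n$, or $V_n\oplus V_{n-1,1}$ --- multiplicity-free in every case --- which is the content your proposal leaves unproved.
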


The abovementioned result of Anan'in and Kemer was refined by Drenski and Vladimirova \cite{MR866654} who studied in great detail varieties of associative algebras defined by identities of degree and their lattices of subvarieties. These latter results were recently used by Bremner and the first author of this paper to classify Koszul quotients of the associative operad in \cite{MR4576938}. Similarly, we were able to use the main result of the present paper to classify Koszul quotients of the Novikov operad. Since the Novikov operad is isomorphic to its Koszul dual, this also gives a classification of Koszul operads with one binary generator of which the Novikov operad is a quotient. Recall that Dzhumadildaev \cite{MR2834140} proved that the Novikov operad is not Koszul, so this result describes all ways in which a Novikov algebra can be regarded as an algebra over an Koszul operad. 
(This may be compared with a similar problem of Loday \cite{MR3013086} asking to determine Koszul operads that act on the algebra of octonions, a question that motivated the paper~\cite{MR4576938}.) Specifically, we prove the following theorem.

\begin{itheorem}[{Th.~\ref{th:Koszul}}]
The following Koszul operads with one binary generator admit the (right) Novikov operad as a quotient:
\begin{itemize}
\item the operad of (left) nonassociative permutative algebras $\NAP$ defined by the identity $a_1(a_2a_3)-a_2(a_1a_3)=0$,
\item the (right) pre-Lie operad defined by the identity $(a_1,a_2,a_3)=(a_1,a_3,a_2)$,
\item each operad in the parametric family depending on the parameter $(\gamma:\delta)\in\mathbb{P}^1$ defined by the identity 
\begin{multline*}
\gamma((a_1,a_2,a_3)+(a_3,a_2,a_1)-(a_2,a_1,a_3)-(a_2,a_3,a_1))+\\
\delta((a_1a_2)a_3+(a_3a_2)a_1-(a_1a_3)a_2-(a_3a_1)a_2),
\end{multline*}
\item each operad in the parametric family depending on the parameter $(\alpha:\beta)\in\mathbb{P}^1$ defined by the identity 
\begin{multline*}
\alpha((a_1a_2)a_3+(a_2a_3)a_1+(a_3a_1)a_2+(a_1a_3)a_2+(a_2a_1)a_3+(a_3a_2)a_1)+\\
\beta(a_1(a_2a_3)+a_2(a_3a_1)+a_3(a_1a_2)+a_1(a_3a_2)+a_2(a_1a_3)+a_3(a_2a_1)).
\end{multline*}
\item the magmatic operad of absolutely free nonassociative algebras.
\end{itemize}
\end{itheorem}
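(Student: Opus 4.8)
The plan is to translate the classification into the combinatorics of quadratic operads and then to lean on the self-duality $\mathcal{N}^!\cong\mathcal{N}$. Write $\mathrm{Mag}$ for the free operad on one binary generator $\mu$, so that $\mathrm{Mag}(3)$ is the free $\k[S_3]$-module on the two trees $\mu\circ_1\mu$ and $\mu\circ_2\mu$, of dimension twelve, and every quadratic operad with a single binary generator has the form $\mathrm{Mag}/(R)$ for an $S_3$-submodule $R\subseteq\mathrm{Mag}(3)$. The Novikov operad is $\mathcal{N}=\mathrm{Mag}/(R_{\mathcal N})$, where $R_{\mathcal N}$ is spanned by the $S_3$-orbits of the two Novikov relations; a first computation identifies the isotypic decompositions of $R_{\mathcal N}$ and of $\mathrm{Mag}(3)/R_{\mathcal N}$ and pins down which isotypic components occur with multiplicity two. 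These are exactly the components responsible for the two $\mathbb{P}^1$-families, since a one-dimensional subrepresentation of a multiplicity-two component is the same datum as a point of $\mathbb{P}^1$. Because $\mathcal{N}$ is Koszul self-dual, the operation $R\mapsto R^\perp$ is an involution on the lattice of quadratic operads that exchanges the operads surjecting onto $\mathcal{N}$ with the quadratic quotients of $\mathcal{N}$ and preserves the Koszul property, so it is equivalent, and more convenient, to classify the Koszul quadratic quotients of $\mathcal{N}$ and then transport the answer back by Koszul duality.

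A quadratic quotient of $\mathcal{N}$ is precisely a subvariety of the variety of Novikov algebras cut out by identities of degree three, which places the problem squarely in the framework of Theorem~\ref{th:maindistr}. The crucial bridge to prove is that, for such a quotient $\mathcal{Q}$, the Koszul property is governed by the lattice of subvarieties of the associated variety: I would show that if $\mathcal{Q}$ is Koszul then that lattice is distributive, so that Theorem~\ref{th:maindistr} applies and forces the defining identities into the two-parameter family recorded there. Combined with the dual statement this confines the candidates to the finite list of the theorem—the two $\mathbb{P}^1$-families together with the distinguished operads $\NAP$, pre-Lie, and the free magmatic operad—and reduces the problem to the two separate tasks of confirming that each candidate is Koszul and of excluding everything else.

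For the existence half it remains to prove that each listed operad is Koszul; having a single binary generator is built into the construction. Koszulness is automatic for the magmatic operad and classical for $\NAP$ and the pre-Lie operad, so the real work is the two parametric families: here I would pass to the associated shuffle operad and produce a quadratic Gröbner basis with respect to a path-lexicographic order on tree monomials, checking that every S-polynomial reduces to zero, so that the operad is PBW and hence Koszul; equivalently one verifies the operadic form of Backelin's criterion, namely that the lattice of subspaces generated by the placements of the relations is distributive, the first nontrivial instance being arity four. For the completeness half I would rule out all remaining submodules by exhibiting non-Koszulness, most efficiently through the Ginzburg--Kapranov functional equation relating the generating series of an operad and its Koszul dual, and, where the numerical test is inconclusive, by locating a non-distributive sublattice in arity four.

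The step I expect to be the main obstacle is the uniform treatment of the two $\mathbb{P}^1$-families, which are moduli of operads rather than single operads. The leading terms of the relations, and hence the combinatorial shape of a Gröbner basis, can jump at the special values of the parameters where a structure constant degenerates—precisely the points on the projective lines at which $\NAP$ and the pre-Lie operad sit—so the generic parameter and these finitely many specializations must be analysed separately while checking that the Koszul property persists across the entire family. The Ginzburg--Kapranov test should be the efficient way to flag exactly which specializations, if any, demand a dedicated rewriting argument, thereby keeping the case analysis finite.
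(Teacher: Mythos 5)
Your global architecture does match the paper's: both classify the Koszul quadratic quotients of the Novikov operad (equivalently, subvarieties cut out by degree-three identities) and then transport the answer through the self-duality of Novikov, since $R\mapsto R^\perp$ reverses quotient maps and preserves Koszulness. But your central bridge lemma --- ``if $\calQ$ is Koszul then the lattice of subvarieties of the associated variety is distributive'' --- is a genuine gap. No such implication exists: Koszulness neither implies nor is implied by multiplicity-freeness of the $S_n$-components (the associative and pre-Lie operads are Koszul, yet already their arity-three components contain $V_{2,1}$ with multiplicity two), and nothing about being a quotient of Novikov rescues the claim; the only way to confirm it in this setting is to carry out the full case analysis the bridge was meant to replace. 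Worse, even if granted, the bridge does not ``confine the candidates to the finite list'': by Theorem~\ref{th:maindistr}, distributivity holds for the entire two-parameter family $\calO_\rho$, $\rho\in\mathbb{P}^1\times\mathbb{P}^1$, of which only the two points $((0:1),(0:1))$ and $((1:-1),(1:0))$ yield Koszul operads ($\NAP^!$ and $\Perm$, Proposition~\ref{prop:fifth}); ruling out the rest requires precisely the dimension computations of Theorem~\ref{th:rel12} together with positivity tests whose obstructions appear only at $t^{10}$ and $t^{11}$. Conversely, the quotient killing both copies of the two-dimensional module --- which your bridge would discard for free, since its arity-three component is $V_3^2$ --- is in fact the hardest non-Koszulness case in the paper: there the plain Ginzburg--Kapranov test is inconclusive (the first $1000$ coefficients have good signs), and Proposition~\ref{prop:seventh} needs a weight-graded refinement of the functional equation detecting a bad coefficient at $t^{20}$. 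That no soft lattice-theoretic argument handles this case is strong evidence your bridge cannot be proved.

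Two secondary problems. First, your plan for the parametric families (``produce a quadratic Gr\"obner basis \ldots\ checking that every S-polynomial reduces to zero'') fails for the family dual to the $(\gamma:\delta)$ item: the paper states explicitly that for $\gamma\ne\delta$ no quadratic Gr\"obner basis with the desired normal forms exists, and instead sandwiches the Poincar\'e series of $\calS_{\gamma,\delta}^!$ between a lower bound from semicontinuity over $\k[s]$ (specializing at the quadratic-GB point $s=0$, i.e.\ $\gamma=-\delta$) and an upper bound from a terminating rewriting procedure that is not a Gr\"obner basis, concluding via Proposition~\ref{prop:dim2}; you correctly anticipate that leading terms jump at special parameters, but your proposed fix does not cover this mechanism. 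Second, you conflate two different lattices: Backelin-type distributivity of subspace lattices (which is equivalent to Koszulness) and the lattice of subvarieties of Theorem~\ref{th:maindistr} (which encodes multiplicity-freeness of the components); ``locating a non-distributive sublattice in arity four'' in the subvariety sense is not a non-Koszulness criterion, and in the Backelin sense it is a computation the paper never performs. The paper's actual route is to enumerate all nine multiplicity types (Propositions~\ref{prop:first}--\ref{prop:ninth}), settle each by GK positivity, weighted GK series, polarized Gr\"obner bases, connected-sum arguments, or semicontinuity, and only then dualize.
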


This paper is organized as follows. In Section \ref{sec:recollections} we give some necessary recollections of definitions and results we use. In Section \ref{sec:distrthm}, we undertake a systematic study of quotients of the Novikov operad: we obtain information on $S_n$-module structures on their components, which allows us to prove our main result, the classification of varieties of Novikov algebras whose lattice of subvarieties is distributive. In Section \ref{sec:koszulthm}, we use the results obtained in the previous section to classify all Koszul operads admitting the Novikov operad as a quotient. Finally, in Appendix we prove several technical computational results that we use in the paper.

\subsection*{Acknowledgements} We would like to dedicate this article to Leonid Arkadievich Bokut on the occasion of his 87th birthday. His  passion for algebra in all its richness has been a constant inspiration for several generations of mathematicians, and we are excited to have contributed to the investigation of the very interesting question that he raised, that of describing varieties of algebras whose lattice of subvarieties is distributive. We wish him a wonderful birthday and many happy returns of the day.

The first author was supported by Institut Universitaire de France. The second author was supported by the Kazakhstan Presidential Bolashak Scholarship Program that supported his stay at Institut de Recherche Mathématique Avancée at the University of Strasbourg. The final draft of this paper was completed while the first author was visiting the Banach center in Warsaw during the Simons semester ``Knots, homologies, and physics''. They wish to express their gratitude to those institutions for hospitality and excellent working conditions. 

\section{Recollections}\label{sec:recollections}

Throughout this paper, all vector spaces are defined over an arbitrary field $\k$ of characteristic zero. By an \emph{algebra} we understand a vector space $V$ equipped with several multilinear structure operations $f_i\colon V^{\otimes n_i}\to V$; here $n_i$ is the \emph{arity} of the operation $f_i$. If one fixes a set of structure operations $S$, all algebras with such operations form a category, and one has the forgetful functor from that category to the category of vector spaces. That functor admits a left adjoint, applying that functor to a vector space $U$ is the \emph{absolutely free algebra} generated by $U$, and denoted $F_S\langle U\rangle$. That algebra has a basis of \emph{monomials} that are iterations of the structure operations applied to elements of a basis of $U$; a linear combination of monomials in the absolutely free algebra will be referred to as a \emph{polynomial}. A \emph{polynomial identity} in $m$ variables in an algebra $V$ is a polynomial in the absolutely free algebra $F_S\langle \k^m\rangle$ that vanishes under any algebra morphism $F_S\langle \k^m\rangle\to V$ corresponding, via the adjunction, to a linear map $\k^m\to V$ (or, in plain words, vanishes under any substitutions of elements of $V$ instead of its arguments). A \emph{variety of algebras} is a subcategory all algebras with the given set of structure operations where a certain set of polynomial identities is satisfied. We refer the reader to~\cite{MR668355} for general information on polynomial identities.

Recall that a \emph{lattice} is a poset in which every two elements $a,b$ the set of elements that are less than both of them admits the unique maximal element $a\wedge b$ and the set of elements that are greater than both of them admits the unique minimal element $a\vee b$. For a variety of algebras $\mathfrak{M}$, all its subvarieties form a lattice with respect to inclusion. Here $\mathfrak{M}_1\wedge\mathfrak{M}_2$ consists of all algebras where all identities defining each of the varieties $\mathfrak{M}_1$ and $\mathfrak{M}_2$ are satisfied, and $\mathfrak{M}_1\vee\mathfrak{M}_2$ consists of all algebras where all identities that hold in both varieties $\mathfrak{M}_1$ and $\mathfrak{M}_2$ are satisfied. 

Recall that over a field of characteristic zero every representation of the symmetric group $S_n$ is completely reducible, and that its irreducible representations $V_\pi$ are indexed by partitions $\pi$ of $n$ (that is, $\pi=(m_1,\ldots,m_k)$ with $m_1\geq \ldots\geq m_k$ and $m_1\cdots+m_k=n$). The reader is invited to consult \cite{MR3583300} for a detailed introduction to representation theory of symmetric groups in the context of polynomial identities.

A polynomial identity in $F_S\langle \k^m\rangle$ is said to be \emph{multihomogeneous} of degree $(d_1,\ldots,d_m)\in\mathbb{N}^m$ if it is a linear combination of monomials that contain the $i$-th generator $d_i$ times for all $i=1,\ldots,m$. In particular, a polynomial identity is said to be \emph{multilinear} if it is multihomogeneous of degree $(1,1,\ldots,1)$. The following result is well known (its first part is true over any infinite field, not necessarily of zero characteristic). 

\begin{proposition}\label{prop:multlin}
Every identity is equivalent to a system of multihomogeneous ones in the same variables. Moreover, every identity is equivalent to a system of multilinear ones (in a larger number of variables).
\end{proposition}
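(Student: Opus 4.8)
The plan is to establish the two assertions separately, each by a standard scaling-and-substitution argument that exploits the multilinearity of all structure operations.

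For the first assertion, I would start from a given identity $f(x_1,\dots,x_m)$ and write it as the sum $f=\sum_{\mathbf d} f_{\mathbf d}$ of its multihomogeneous components, where $f_{\mathbf d}$ has multidegree $\mathbf d=(d_1,\dots,d_m)$. Since each structure operation $f_i\colon V^{\otimes n_i}\to V$ is multilinear, substituting $\lambda_i x_i$ for $x_i$ with scalars $\lambda_i\in\k$ rescales each component by the monomial $\lambda_1^{d_1}\cdots\lambda_m^{d_m}$. If $f$ is an identity then so is every such rescaled polynomial, because $\lambda_i x_i$ is again an element of the algebra. Fixing all but one of the $\lambda_i$ and letting the remaining scalar range over sufficiently many distinct elements of $\k$, a Vandermonde determinant argument shows that each component of $f$ homogeneous in that single variable is separately an identity; iterating over $x_1,\dots,x_m$ yields that every $f_{\mathbf d}$ is an identity. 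Conversely $f=\sum_{\mathbf d}f_{\mathbf d}$, so $f$ is equivalent to the system $\{f_{\mathbf d}\}$. This step uses only that $\k$ is infinite, which is exactly why the first assertion holds over any infinite field.

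For the second assertion, I would apply complete polarization to a multihomogeneous identity $f$ of multidegree $(d_1,\dots,d_m)$. Replace each $x_i$ by a sum $x_i^{(1)}+\dots+x_i^{(d_i)}$ of $d_i$ fresh variables, and let $\widetilde f$ denote the multilinear component, linear in each $x_i^{(j)}$, of the resulting polynomial $f\bigl(x_1^{(1)}+\dots+x_1^{(d_1)},\dots,x_m^{(1)}+\dots+x_m^{(d_m)}\bigr)$. If $f$ is an identity, then this substitution instance is an identity, and by the first part so is its multihomogeneous component $\widetilde f$. For the converse I would specialize $x_i^{(j)}\mapsto x_i$ for all $j$: a direct count shows that this collapses $\widetilde f$ back to $d_1!\cdots d_m!\cdot f$. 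As $\k$ has characteristic zero, the coefficient $d_1!\cdots d_m!$ is invertible, so $f$ is a consequence of $\widetilde f$, and the two are equivalent. Applying this to each multihomogeneous component produced in the first step expresses the original identity as an equivalent system of multilinear identities.

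The argument is essentially formal, so there is no deep obstacle; the only delicate points are bookkeeping ones. The first is ensuring enough distinct scalars for the Vandermonde matrix to be invertible, which is precisely where the size of the field enters. The second is verifying that the final specialization recovers $f$ with the nonzero factor $d_1!\cdots d_m!$, which is the sole place characteristic zero is used and explains why only the multilinearization step, and not the multihomogeneity step, requires it.
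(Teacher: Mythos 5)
Your proof is correct and follows essentially the same route as the paper: the paper states the multihomogeneous decomposition as well known (your Vandermonde scaling argument being the standard justification over an infinite field) and obtains multilinearity by polarization, using characteristic zero to reverse the process. The only cosmetic difference is that the paper linearizes one degree at a time by induction, applying derivations $\Delta_{a_i\mapsto b}$ --- whose effect on $f$ is exactly the component of $f(a_i\mapsto a_i+b)$ linear in $b$ --- whereas you perform complete polarization in a single step and recover $f$ with the factor $d_1!\cdots d_m!$; these are the stepwise and one-shot versions of the same argument.
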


The passage from multihomogeneous to multilinear is done by using derivations $\Delta_{a_i\mapsto b}$ of the free algebra (that send the generator $a_i$ to a new generator $b$ and all other generators to zero). Such a derivation sends an identity of degree $d_i>1$ in $a_i$ to an equivalent identity of degree $d_i-1$ in $a_i$, and an inductive argument completes the proof. These and similar derivations will be extensively used in this paper. 

There is also a useful argument (which we shall also use extensively in this paper) going from multilinear identities to multihomogeneous ones which can be traced to the ``Aronhold polarization process'' \cite{MR1579064}. 

\begin{proposition}\label{prop:sym-id}
Let $f$ be a multihomogeneous polynomial identity of degree $(d_1,\ldots,d_n)$ in variables $a_1,\ldots,a_n$, and suppose that for some $k\le n$ we have $d_1=\ldots=d_k=1$ and, additionally, $f$ is symmetric in $a_1,\ldots, a_k$. Then the multihomogeneous identity obtained from $f$by setting $a_1=\cdots=a_k$ is equivalent to $f$.
\end{proposition}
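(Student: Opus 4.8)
The plan is to establish the two implications of the equivalence separately. Write $g$ for the multihomogeneous identity obtained from $f$ by the substitution $a_1=\cdots=a_k=a$, where $a$ is a single new variable, so that $g$ has degree $k$ in $a$ (and its original degrees $d_{k+1},\ldots,d_n$ in the remaining variables). One direction is immediate: since $g$ is obtained from $f$ by substituting variables, any algebra satisfying $f$ also satisfies $g$, so $f$ implies $g$. The content of the proposition is the reverse implication, which is exactly the Aronhold polarization step: recovering $f$ from $g$.

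For the reverse direction I would substitute a generic linear combination of fresh variables for $a$ and isolate the relevant multihomogeneous component. Concretely, introduce scalar parameters $\lambda_1,\ldots,\lambda_k$ and substitute $a\mapsto\lambda_1a_1+\cdots+\lambda_ka_k$ into $g$, keeping $a_{k+1},\ldots,a_n$ untouched. Since $f$, and hence $g$, is multilinear in the merged arguments, expanding the substitution gives
\[
g(\lambda_1a_1+\cdots+\lambda_ka_k)=\sum_{j_1,\ldots,j_k=1}^{k}\lambda_{j_1}\cdots\lambda_{j_k}\,f(a_{j_1},\ldots,a_{j_k}),
\]
where I suppress the arguments $a_{k+1},\ldots,a_n$. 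Extracting the coefficient of $\lambda_1\cdots\lambda_k$ — that is, the part multilinear in $a_1,\ldots,a_k$ — only the tuples $(j_1,\ldots,j_k)$ that are permutations of $(1,\ldots,k)$ survive, and for each $\tau\in S_k$ the parameter monomial is exactly $\lambda_1\cdots\lambda_k$. Thus this component equals $\sum_{\tau\in S_k}f(a_{\tau(1)},\ldots,a_{\tau(k)})$. Here the symmetry hypothesis is decisive: every summand equals $f(a_1,\ldots,a_k)$, so the component is $k!\,f$, and since we are in characteristic zero we may divide by $k!$ to recover $f$ exactly.

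It remains to argue that this extracted component is a consequence of $g$, which is the technical crux. After the substitution the resulting polynomial vanishes under every substitution of algebra elements for $a_1,\ldots,a_n$ and every choice of scalars $\lambda_1,\ldots,\lambda_k\in\k$, because $g$ is assumed to be an identity. As $\k$ is infinite, the coefficient of each monomial in the parameters $\lambda_i$ is then separately a polynomial identity in $a_1,\ldots,a_n$ — this is precisely the Vandermonde-type separation underlying Proposition~\ref{prop:multlin}. Applying it to the coefficient of $\lambda_1\cdots\lambda_k$ shows that $k!\,f$, and hence $f$, is an identity. The only delicate point is this parameter-separation; the characteristic-zero hypothesis enters solely to make the factor $k!$ invertible, and everything else is a direct expansion.
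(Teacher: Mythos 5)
Your proof is correct. The paper does not actually include a proof of Proposition~\ref{prop:sym-id} --- it is stated with only a citation to the classical ``Aronhold polarization process'' --- and your argument (substituting a generic linear combination $\lambda_1a_1+\cdots+\lambda_ka_k$ for the merged variable, using the infinitude of the field to separate the $\lambda$-multihomogeneous components as in Proposition~\ref{prop:multlin}, and then using the symmetry of $f$ together with characteristic zero to recover $f$ from the multilinear component $k!\,f$) is exactly that standard polarization argument, so your proposal coincides with the proof the citation refers to.
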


Proposition \ref{prop:multlin} leads to a way of thinking of varieties of algebras in terms of \emph{operads}. An operad is a collection of representations of symmetric groups equipped with operations that mimic substitutions of multilinear maps and satisfy the same properties that such substitutions satisfy. Operads were first defined by J.~P.~May in 1971 in his work on iterated loop spaces \cite{MR0420610}, the same notion seems to have been first introduced under a much more technical name of a ``clone of multilinear operations'' in Artamonov's 1969 paper \cite{MR0237408}. Operads are in one-to-one correspondence with varieties of algebras; however, the language of operads allows to use some methods that are not available on the level of algebras. Perhaps one of the most powerful method of that sort is the theory of operadic Gr\"obner bases \cite{MR2667136}, which goes via the notion of a \emph{shuffle operad} that cannot be defined intrinsically on the level of varieties of algebras. 
For example, if we take the associative operad, as a symmetric operad it is generated by a single operation $a_1,a_2\mapsto a_1a_2$ subject to the single relation $(a_1a_2)a_3=a_1(a_2a_3)$. In the universe of shuffle operads, one has to forget the symmetric groups actions, and write linear bases both for generators and for relations in terms of shuffle tree monomials \cite[Sec.~5.3]{MR3642294}, \cite[Sec.~8.2]{MR2954392}, which gives two generators and six relations in the case of the associative operad. We refer the reader to \cite{MR2954392} for general information on operads, to \cite{MR3642294} for a hands-on introduction to operadic Gr\"obner bases, and to \cite[Sec.~2]{DU22} for a discussion of translation between the language of varieties of algebras and the language of operads.

An important class of varieties of algebras consists of varieties whose subvarieties form a distributive lattice. Recall that a lattice is said to be \emph{distributive} if it satisfies $(x\vee y)\wedge z=(x\wedge z)\vee(y\wedge z)$. The following criterion of distributivity in terms of representations of symmetric groups proved in \cite{MR495500} (and rephrased here using operads) will be extensively used in this paper.

\begin{proposition}\label{prop:distr}
Let $\mathfrak{M}$ be a variety of algebras, and $\calO_\mathfrak{M}$ be an operad describing that variety. 
The lattice of subvarieties of $\mathfrak{M}$ is distributive if and only if for each $n$ the $S_n$-module $\calO_\mathfrak{M}(n)$ contains each irreducible representation with multiplicity at most one.
\end{proposition}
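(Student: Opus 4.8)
The plan is to pass from varieties to operads and to express distributivity of the subvariety lattice entirely in terms of the $S_n$-module structure of $\calO_\mathfrak{M}$. By Proposition~\ref{prop:multlin} a variety is determined by its multilinear identities, and the multilinear identities of degree $n$ form an $S_n$-submodule of the arity-$n$ component of the free operad. Taking these modulo the identities of $\mathfrak{M}$, I would identify the subvarieties of $\mathfrak{M}$ with the operadic ideals of $\calO=\calO_\mathfrak{M}$: a subvariety $\mathfrak{N}\subseteq\mathfrak{M}$ corresponds to the collection $I_\mathfrak{N}=\{I_\mathfrak{N}(n)\}$ of $S_n$-submodules $I_\mathfrak{N}(n)\subseteq\calO(n)$ recording the additional identities of $\mathfrak{N}$, closure under substitution of identities being exactly closure under operadic composition. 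A larger ideal means more identities, hence a smaller variety, so this is an anti-isomorphism of lattices under which $\mathfrak{N}_1\vee\mathfrak{N}_2$ corresponds to $I_{\mathfrak{N}_1}\cap I_{\mathfrak{N}_2}$ and $\mathfrak{N}_1\wedge\mathfrak{N}_2$ to $I_{\mathfrak{N}_1}+I_{\mathfrak{N}_2}$. Since distributivity of a lattice is equivalent to distributivity of its opposite, it suffices to prove that the lattice of operadic ideals of $\calO$ is distributive if and only if each $\calO(n)$ is multiplicity-free. The first thing I would record is that both lattice operations on ideals are computed arity by arity: one checks directly that $(I+J)(n)=I(n)+J(n)$ and $(I\cap J)(n)=I(n)\cap J(n)$ are again ideals, because operadic composition is multilinear and raises arity. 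Consequently the map $I\mapsto(I(n))_n$ is a lattice embedding of the ideal lattice into the product over $n$ of the lattices of $S_n$-submodules of $\calO(n)$.

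For the easy direction, suppose every $\calO(n)$ is multiplicity-free. Since $\k$ has characteristic zero, $\calO(n)$ is a semisimple $S_n$-module with each isotypic component irreducible, so its lattice of submodules is a finite Boolean lattice, in particular distributive. A product of distributive lattices is distributive, and every sublattice of a distributive lattice is distributive; hence the ideal lattice, and therefore the subvariety lattice, is distributive.

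For the converse, suppose some $\calO(n)$ contains an irreducible $V_\pi$ with multiplicity at least two, so that its $V_\pi$-isotypic component contains a copy of $V_\pi\oplus V_\pi$. By Schur's lemma the irreducible submodules isomorphic to $V_\pi$ inside this summand are parametrized by the lines of a two-dimensional multiplicity space, and since $\k$ is infinite I can choose three pairwise-distinct such submodules $W_1,W_2,W_3$ with $W_i\cap W_j=0$ and $W_i+W_j=V_\pi\oplus V_\pi$ for $i\ne j$ (for instance the two coordinate copies and the diagonal). Let $\langle W_i\rangle$ denote the operadic ideal generated by $W_i$. The key point is that, because composition raises arity, $\langle W_i\rangle(m)=0$ for $m<n$ and $\langle W_i\rangle(n)=W_i$; combining this with the arity-wise computation of the lattice operations gives, in arity $n$,
\[
\bigl((\langle W_1\rangle+\langle W_2\rangle)\cap\langle W_3\rangle\bigr)(n)=(W_1+W_2)\cap W_3=W_3\ne 0,
\]
while
\[
\bigl((\langle W_1\rangle\cap\langle W_3\rangle)+(\langle W_2\rangle\cap\langle W_3\rangle)\bigr)(n)=(W_1\cap W_3)+(W_2\cap W_3)=0.
\]
Thus the distributive law fails for $\langle W_1\rangle,\langle W_2\rangle,\langle W_3\rangle$, so the ideal lattice, and hence the subvariety lattice, is not distributive.

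The main obstacle is precisely this last step. One must verify carefully that passing to the ideal generated by $W_i$ does not enlarge the arity-$n$ component: a priori an ideal could acquire new degree-$n$ elements as composites of its lower-arity components, and it is exactly the vanishing of those components together with the arity-raising nature of operadic composition that forces $\langle W_i\rangle(n)=W_i$. This is what guarantees that the non-distributive ``diamond'' visible among the submodules of $\calO(n)$ genuinely survives at the level of operadic ideals, and hence among the subvarieties of $\mathfrak{M}$.
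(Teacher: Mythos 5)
Your argument is correct, and in fact the paper offers no proof of this proposition at all: it is quoted from the literature (the reference \cite{MR495500}), so there is nothing internal to compare against line by line. What you have written is essentially the standard argument behind that citation: the anti-isomorphism between subvarieties of $\mathfrak{M}$ and operadic ideals of $\calO_\mathfrak{M}$ (valid in characteristic zero via Proposition~\ref{prop:multlin}), the observation that sums and intersections of ideals are computed arity by arity so that the ideal lattice embeds into a product of submodule lattices, the Boolean (hence distributive) structure of the submodule lattice of a multiplicity-free semisimple module for one direction, and the diamond $W_1,W_2,W_3$ inside $V_\pi\oplus V_\pi$ for the other. You also correctly isolate the one step needing care, namely $\langle W_i\rangle(n)=W_i$.

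One caveat you should make explicit: that key step, and indeed the proposition as literally stated, requires $\calO_\mathfrak{M}(1)=\k\,\mathrm{id}$, i.e.\ no unary structure operations. Your justification ``composition raises arity'' is only true for compositions avoiding arity-one elements; if $\calO(1)$ is larger, the ideal generated by $W_i$ can acquire new arity-$n$ elements of the form $\mu\circ w$ or $w\circ(\nu_1,\ldots,\nu_n)$ with $\mu,\nu_j\in\calO(1)$, and the equality $\langle W_i\rangle(n)=W_i$ breaks. Worse, the criterion itself then fails: for the variety with one unary operation $f$ and the identity $f(f(x))=0$, one has $\calO(1)=\k\,\mathrm{id}\oplus\k f$ (so the trivial $S_1$-module occurs with multiplicity two) while the ideal lattice is the three-element chain $0\subset(f)\subset\calO(1)$, which is distributive. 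So the multiplicity-free condition is not necessary in the presence of unary operations, and your converse direction is exactly where the hypothesis enters. Since the paper applies the proposition only to quotients of the Novikov operad, where $\calO(1)=\k$, this is a matter of stating the standing assumption rather than a flaw in the intended setting; with that hypothesis added, your proof is complete and correct.
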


The language of operads is also useful in questions of homological or homotopical nature, where the theory of Koszul duality for operads \cite{MR1301191} has particular prominence. This theory is only applicable if an operad is Koszul, and determining that is often a very nontrivial question. To prove that an operad is Koszul, the easiest and most general known approach is to use operadic Gr\"obner bases: a shuffle operad that has a quadratic Gr\"obner basis is known to be Koszul \cite[Sec.~6.4]{MR3642294}, though the converse is false. Moreover, the same argument can be used to show that an operad presented by a convergent quadratic rewriting system \cite[Sec.~2.6]{MR3642294} is Koszul. Finding a suitable rewriting system is sometimes a matter of luck, as it heavily depends on the choice of a presentation by generators and relations. (It is worth noting that, for operads generated by one binary operation, there is a useful ``polarization trick'' \cite{MR2225770} that introduces a  presentation by generators and relations which is sometimes preferable: it amounts to considering the generators $a_1\cdot a_2=a_1a_2+a_2a_1$ and $[a_1,a_2]=a_1a_2-a_2a_1$.) 

To prove that an operad is not Koszul, one often ends up using Poincar\'e series, that is exponential generating functions of Euler characteristics of components of our operad. For an operad $\calP$ concentrated in homological degree zero, the Poincar\'e series coincides with the Hilbert series
 \[
f_{\calP}(t)=\sum_{n\ge 1}\frac{\dim\calO(n)}{n!}t^n.
 \] 
By a direct inspection, one sees that the Poincar\'e series of the Koszul complex of a quadratic operad generated by binary operations of homological degree zero is equal to $-f_{\calP^!}(-f_{\calP}(t))$. Since the Euler characteristics of a chain complex and its homology are equal, this implies that for a Koszul operad $\calP$, one has
 \[
-f_{\calP^!}(-f_{\calP}(t))=t,
 \]
so the series $f_{\calP}(t)$ and $-f_{\calP^!}(-t)$ are compositional inverses of one another. This leads to a useful positivity test of Ginzburg and Kapranov \cite{MR1301191}.   

\begin{proposition}[Positivity test]\label{prop:positivity}
Let $\calP$ be a quadratic operad generated by binary operations of homological degree zero. Denote by $a_n$ the coefficient of $t^n$ in the compositional inverse of the Poincar\'e series of that operad. If the operad $\calP$ is Koszul, then $(-1)^{n-1}a_n\ge 0$ for all $n\ge 1$.
\end{proposition}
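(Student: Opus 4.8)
The plan is to observe that essentially all of the substance has already been assembled in the discussion preceding the statement: the computation of the Euler characteristic of the Koszul complex shows that for a Koszul operad $\calP$ the series $f_{\calP}(t)$ and $-f_{\calP^!}(-t)$ are mutually inverse under composition. Consequently, the compositional inverse of the Poincar\'e series $f_{\calP}(t)$ is \emph{identified} with the explicit series $-f_{\calP^!}(-t)$, and the proof reduces to a coefficient extraction. So the strategy is not to prove a new analytic fact but to read off the signs of the coefficients of a series whose positivity properties we understand.

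Concretely, first I would write the Hilbert series of the Koszul dual as $f_{\calP^!}(t)=\sum_{n\ge 1}\frac{\dim\calP^!(n)}{n!}t^n$, which is legitimate because $\calP^!$ is again a quadratic operad generated by binary operations of homological degree zero, hence concentrated in homological degree zero with finite-dimensional components. Substituting $-t$ and negating yields
\[
-f_{\calP^!}(-t)=\sum_{n\ge 1}(-1)^{n-1}\frac{\dim\calP^!(n)}{n!}\,t^n .
\]
Since the Koszulness of $\calP$ identifies this with the compositional inverse of $f_{\calP}(t)$, comparing coefficients of $t^n$ gives directly $a_n=(-1)^{n-1}\frac{\dim\calP^!(n)}{n!}$.

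From this formula the conclusion is immediate: multiplying by $(-1)^{n-1}$ gives $(-1)^{n-1}a_n=\frac{\dim\calP^!(n)}{n!}$, which is nonnegative because dimensions of vector spaces are nonnegative and $n!>0$. This establishes $(-1)^{n-1}a_n\ge 0$ for all $n\ge 1$, as required.

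The proof as such has no genuine obstacle; the only points that require a moment of care are bookkeeping ones. The first is the sign manipulation $(-1)^{n}\mapsto(-1)^{n-1}$ coming from the two negations in $-f_{\calP^!}(-t)$, which must be tracked precisely so that the alternating sign in the statement matches the honest nonnegativity of the Koszul-dual dimensions. The second is the implicit claim that the Koszul dual of a quadratic operad generated by binary operations of homological degree zero is itself of this type, so that $f_{\calP^!}$ has manifestly nonnegative coefficients; under the homological-degree conventions fixed earlier in the excerpt this is part of the standard setup, and it is exactly what converts the formula for $a_n$ into a positivity statement.
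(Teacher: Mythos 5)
Your proof is correct and coincides with the paper's own justification: the paper states this proposition as an immediate consequence of the preceding discussion, where Euler-characteristic considerations for the Koszul complex show that $f_{\calP}(t)$ and $-f_{\calP^!}(-t)$ are compositional inverses, exactly the identification you use. Your sign bookkeeping $a_n=(-1)^{n-1}\frac{\dim\calP^!(n)}{n!}$ and the observation that $\calP^!$ is again a quadratic operad with binary generators in homological degree zero (hence has nonnegative Hilbert coefficients) are precisely the intended argument.
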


There is also the following useful sufficient condition of Koszulness in terms of Poincar\'e series; the reader is invited to consult \cite[Prop.~2.4]{MR4576938} for a proof.

\begin{proposition}\label{prop:dim2}
Let $\calP$ be a quadratic operad generated by binary operations of homological degree zero. Suppose that $\calP(n)=0$ for $n\ge 4$, and that
 \[
-f_{\calP^!}(-f_{\calP}(t))=t.
 \]
Then the operad $\calP$ is Koszul.
\end{proposition}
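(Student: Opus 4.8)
The plan is to prove Koszulness by exhibiting the Ginzburg--Kapranov Koszul complex of $\calP$ and showing it is acyclic; the hypothesis $\calP(n)=0$ for $n\ge 4$ is precisely what makes this complex short enough that its Euler characteristic controls its homology. Write $\calK$ for the Koszul dual cooperad of $\calP$ and $\kappa\colon\calK\to\calP$ for the Koszul twisting morphism, and recall that $\calP$ is Koszul if and only if the Koszul complex $(\calP\circ\calK,d_\kappa)$ is acyclic, i.e.\ its homology is the trivial operad concentrated in arity one. I would grade this complex in each arity by the weight of the left-hand $\calP$-factor. Since $\calP$ is generated by binary operations and vanishes in arities $\ge 4$, the space $\calP(k)$ is nonzero only for $k\in\{1,2,3\}$, so in every arity the Koszul complex is concentrated in the three homological degrees $0,1,2$, with components $C_0=\calK(n)$, $C_1=(\calP(2)\circ\calK)(n)$ and $C_2=(\calP(3)\circ\calK)(n)$, and with $d_\kappa$ raising the $\calP$-weight by one.

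Next I would translate the numerical hypothesis into a statement about this complex. As recalled before Proposition~\ref{prop:positivity}, the generating function $-f_{\calP^!}(-f_\calP(t))$ is exactly the generating function of the Euler characteristics of the components of the Koszul complex (note $\dim\calP^!(n)=\dim\calK(n)$). The hypothesis $-f_{\calP^!}(-f_\calP(t))=t$ therefore says that the alternating sum $\dim C_0-\dim C_1+\dim C_2$ vanishes in every arity $n\ge 2$.

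The heart of the argument is to upgrade ``Euler characteristic zero'' to ``acyclic''. For a three-term complex this requires exactness at two of the three terms, and I would establish it at the two extreme terms, leaving the middle term to the Euler characteristic count. At $C_0$ (homological degree $0$) exactness in arities $\ge 2$ should follow from conilpotency of $\calK$ together with the fact that $\kappa$ restricts to an isomorphism in weight one, so that the component of $d_\kappa$ splitting off a single binary cogenerator is injective on $\calK(n)$; at $C_2$ (homological degree $2$) exactness should follow from $\calP$ being generated in arity two, which makes the corresponding component of $d_\kappa$ surjective onto $\calP(3)\circ\calK$. Granting these, in each arity $n\ge 2$ the complex $0\to C_0\to C_1\to C_2\to 0$ is exact at $C_0$ and $C_2$, and since $\dim C_0-\dim C_1+\dim C_2=0$ a dimension count forces exactness at $C_1$ as well; the arities $n\le 2$ are immediate since the complex has at most two terms there. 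Hence the Koszul complex is acyclic and $\calP$ is Koszul.

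The main obstacle I anticipate is the careful verification of exactness at the two extreme terms in full generality, that is, checking that neither end of the short Koszul complex can carry spurious homology. This is where the quadratic presentation of $\calP$ (binary generators, giving surjectivity at the top degree) and the definition of the Koszul dual cooperad (cofreeness on the weight-one cogenerators, giving injectivity at the bottom degree) must be invoked precisely; once these are in place, the numerical identity does all the remaining work through the Euler characteristic, with no further case analysis needed.
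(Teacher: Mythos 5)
You should first know that the paper does not actually prove this proposition: it explicitly defers the proof to \cite[Prop.~2.4]{MR4576938}, so the comparison is with that argument. Your skeleton is the natural one and matches the expected shape of any proof: for $\calP(n)=0$, $n\ge 4$, the Koszul complex $\calP\circ_\kappa\calK$ (with $\calK=\calP^{\scriptstyle\text{\rm !`}}$) has exactly three rows $C_0,C_1,C_2$ indexed by the weight of the $\calP$-factor, the hypothesis on Poincar\'e series says precisely that the Euler characteristic of each arity component vanishes, and acyclicity would follow from exactness at the two extreme rows. Your argument at the $C_0$ end is correct: there the differential is the root co-decomposition $\calK\to sE\circ(\calK,\calK)$ followed by the weight-one isomorphism $\kappa$, and the root co-decomposition is injective in positive weights because $\calK$ embeds into the cofree conilpotent cooperad, where splitting off the root vertex is injective.

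The genuine gap is your claim that exactness at $C_2$ ``follows from $\calP$ being generated in arity two''. Surjectivity of $\gamma\colon E\circ_{(1)}E\to\calP(3)$ is not what is needed: the component of $d_\kappa$ landing in $C_2$ does not act as $\gamma\otimes\mathrm{id}$ on arbitrary decorations, because an element $\mu\otimes(\eta_1,\eta_2,\eta_3)$ of $\calP(3)\circ\calK$ can only be reached from $\alpha\otimes(\xi_1,\xi_2)$ by splitting a cogenerator off the root of some $\xi_i\in\calK$, and the image of the root co-decomposition is a proper subspace cut out by the corelations ($\calK$ is not cofree). Worse, the claim is false as a structural statement about truncated quadratic operads, by a dimension count: take $E=\k[S_2]$, so $\dim F(E)(3)=12$ and $\dim F(E)(4)=120$; the weight-three consequences of $R$ have dimension at most $20\dim R$, so the truncation $\calP(4)=0$ forces $\dim R\ge 6$. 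Taking $\dim R=6$ (so $\dim\calP(3)=6$) with $R$ generic, both $\calP$ and $\calP^!$ vanish in arities $\ge 4$, and then in arity $5$ one computes $\dim C_1(5)=40\dim R+10\dim\calK(4)=240$ while $\dim C_2(5)=10\dim\calP(3)\dim R+60\dim\calP(3)=720$, so $d\colon C_1(5)\to C_2(5)$ cannot be surjective and the Koszul complex has homology at the $C_2$ end. Such operads of course violate the numerical hypothesis, but that is exactly the point: exactness at the top row, unlike the bottom one, must be \emph{extracted} from the hypothesis $-f_{\calP^!}(-f_\calP(t))=t$ (or from a finer analysis of the truncation condition), and your proof never invokes the hypothesis for this step. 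Nor can the step be dropped: bottom exactness together with vanishing Euler characteristic only yields $\dim H^1=\dim H^2$ in each arity, which is perfectly compatible with both being nonzero. So the one step you dismiss with ``generated in arity two'' is where the entire content of the cited proof lies, and as written your argument does not establish the proposition.
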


\section{Consequences of degree three identities and distributivity}\label{sec:distrthm}

In this section, we shall use Proposition \ref{prop:distr} to classify all varieties of Novikov algebras whose lattice of subvarieties is distributive. Since the arity three component of the Novikov operad is a direct sum of irreducible modules $V_3$ and $V_{2,1}$, each with multiplicity two (see, e.g. \cite{MR3241181}), in order to obtain a distributive lattice one must quotient out a copy of each of them. We study the corresponding quotients individually, and then use the corresponding results to show that there are no further obstructions for distributivity. For that, we shall use represent Novikov algebras as subalgebras of commutative associative differential algebras (this goes back to work of I.~M.~Gelfand and I.~Ja.~Dorfman~\cite{MR554407} who attribute this construction to S.~I.~Gelfand). It is well known that if $A$ is a commutative associative algebra with a derivation $a\mapsto a'$, the product $a'b$ makes $A$ into a Novikov algebra. Moreover, it is proved by Dzhumadildaev and L\"ofwall \cite[Th.~7.8]{MR1918188} that the free Novikov algebra can be realized as a subalgebra of the free commutative associative differential algebra (spanned by all \emph{differential monomials} $a_{k_1}^{(i_1)}a_{k_2}^{(i_2)}\cdots a_{k_n}^{(i_n)}$, see \cite{Kolchin,Ritt}) spanned by the differential monomials for which $i_1+\cdots+i_n=n-1$. We shall refer to these as \emph{Novikov differential monomials}. This result means that, when working with free Novikov algebras, we may perform various calculations in free commutative associative differential algebras, and their basis and structure constants are much more intuitive than those of free Novikov algebras. (In fact, it was proved by Bokut, Chen, and Zhang \cite{BCZ18} that every Novikov algebra embeds into an appropriate differential enveloping algebra, so one can faithfully represent any Novikov algebra this way.) This was already used in \cite{MR4549103} to show that every collection of identites in Novikov algebras follows from finitely many of them. It is crucial to important to preserve the defining property of Novikov differential monomials: when deriving new identities from an identity $f=0$, we can replace it by $f'a$, $fa'$, or by a result of substitution $ab'$ instead of one of the variables.

\subsection{Quotienting out a copy of the trivial module}\label{sec:triv}

Let $(\alpha:\beta)\in\mathbb{P}^1$. We consider the operad $\calP_{\alpha,\beta}$ that is the quotient of the Novikov operad by the ideal generated by the identity 
 \[
\alpha a^2a+\beta aa^2=0.
 \]
In the differential realization, this is the identity
\begin{equation}\label{eq:3}
\alpha a''a^2+(\alpha+\beta) (a')^2a =0,
\end{equation}
We shall now determine how the $S_n$-module structure of $\calP_{\alpha,\beta}(n)$ depends on $(\alpha:\beta)$. Clearly, for all $(\alpha:\beta)\in\mathbb{P}^1$, we have 
 \[
\calP_{\alpha,\beta}(1)\cong V_{1}, \quad \calP_{\alpha,\beta}(2)\cong V_{2}\oplus V_{1,1}, \quad \calP_{\alpha,\beta}(3)\cong V_{3}\oplus V_{2,1}^2.
 \]

Namely, we prove the following theorem.

\begin{theorem}\label{th:rel1}
Let $(\alpha:\beta)\in\mathbb{P}^1$, and let $n\ge 4$. The $S_n$-module structure of $\calP_{\alpha,\beta}(n)$ is described as follows:
\begin{itemize}
\item for $(\alpha:\beta)=(0:1)$, we have $\calP_{\alpha,\beta}(n)\cong V_n\oplus V_{n-1,1}$, and these modules are generated by linearizations of $a^{(n-1)}a^{n-1}$ and $a^{(n-1)}ba^{n-2}-b^{(n-1)}a^{n-1}$, respectively. 
\item for $(\alpha\colon\beta)=(1:1)$, we have
 \[
\calP_{\alpha,\beta}(n)\cong
\begin{cases}
V_{3,1}\oplus V_{2,2}\oplus V_{2,1,1}, \,\,\,\,\quad n=4,\\
\qquad\quad V_{2,2,1}, \qquad\qquad n=5,\\
\qquad\qquad 0, \qquad\qquad\quad \! n\ge 6,
\end{cases}
 \]
and these modules are generated by linearizations of
 \[
a''a'ab-a''b'a^2, \ a''a'b^2-a''b'ab-b''a'ab+b''b'a^2,
 \] 
 \[
a''b'ca-a''c'ba-b''a'ca+b''c'a^2+c''a'ba-c''b'a^2, 
 \]
and
 \[
(a''b'c-a''c'b-b''a'c+b''c'a+c''a'b-c''b'a)(a'b-b'a)
 \]
respectively.
\item for $(\alpha\colon\beta)=(1:-1)$, we have $\calP_{\alpha,\beta}(n)\cong V_n\oplus V_{n-1,1}$, and these modules are generated by   linearizations of $(a')^{n-1}a$ and $(a')^{n-1}b-b'(a')^{n-2}a$, respectively.
\item otherwise, we have $\calP_{\alpha,\beta}(n)=0$.
\end{itemize}
\end{theorem}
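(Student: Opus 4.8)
The plan is to work entirely in the differential realization of the free Novikov algebra, where the relation $\alpha a^2a+\beta aa^2=0$ becomes the differential identity \eqref{eq:3}, namely $\alpha a''a^2+(\alpha+\beta)(a')^2a=0$. The key structural observation is that the generating relation is symmetric in its single variable (a degree-three identity in one variable $a$), so by Proposition~\ref{prop:sym-id} it is equivalent to its full multilinearization, a multilinear identity in three variables that is symmetric in those variables. The strategy is then to understand, for each value of $(\alpha:\beta)$, which Novikov differential monomials of weight $n-1$ (those $a_{k_1}^{(i_1)}\cdots a_{k_n}^{(i_n)}$ with $\sum i_j=n-1$) survive modulo the ideal generated by \eqref{eq:3}, and to read off the $S_n$-module structure from an explicit spanning set.

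First I would handle the four cases by analyzing the coefficients $\alpha$ and $\alpha+\beta$ in \eqref{eq:3}. The three special projective points are exactly those where one of these coefficients vanishes or where they coincide in a degenerate way: $(\alpha:\beta)=(0:1)$ kills the $a''a^2$ term leaving $(a')^2a=0$; $(\alpha:\beta)=(1:-1)$ kills the $(a')^2a$ term leaving $a''a^2=0$; and $(\alpha:\beta)=(1:1)$ is the symmetric point where both coefficients are nonzero and equal, producing the generic relation $a''a^2+2(a')^2a=0$. In the two boundary cases $(0:1)$ and $(1:-1)$, the surviving monomials are highly constrained: setting $(a')^2a=0$ (and its polarizations) forces any Novikov differential monomial to collapse unless at most one factor is undifferentiated and the derivatives are concentrated, which is precisely why the answer is $V_n\oplus V_{n-1,1}$ with the stated generators $a^{(n-1)}a^{n-1}$, $a^{(n-1)}ba^{n-2}-b^{(n-1)}a^{n-1}$ in one case and $(a')^{n-1}a$, $(a')^{n-1}b-b'(a')^{n-2}a$ in the other. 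In the generic case (all other $(\alpha:\beta)$) both relations are independently forced, the monomials collapse completely, and one gets $\calP_{\alpha,\beta}(n)=0$ for $n\ge 4$.

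The main obstacle, and the most delicate case, is $(\alpha:\beta)=(1:1)$, where the relation $a''a^2+2(a')^2a=0$ does not simply annihilate individual monomials but imposes a single symmetric linear dependence. Here I expect the bulk of the work to lie in carefully derivating the relation — applying the operations $f\mapsto f'a$, $f\mapsto fa'$, and the substitution $ab'$ as licensed in the preceding paragraph — to generate enough consequences to pin down the quotient, and then verifying that the explicit polynomials listed (the generators of $V_{3,1}\oplus V_{2,2}\oplus V_{2,1,1}$ in degree $4$, of $V_{2,2,1}$ in degree $5$, and nothing in degree $\ge 6$) indeed span and are linearly independent. The degree-$5$ computation showing that only $V_{2,2,1}$ survives, and the sharp vanishing for $n\ge 6$, will require the most care; I would organize this as an explicit finite computation on the finite-dimensional spaces $\calP_{1,1}(n)$ for small $n$, likely deferring the heaviest verification to the Appendix as the paper's structure suggests. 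To confirm that the stated modules are correct rather than merely upper bounds, I would compute the characters (or equivalently dimensions together with the $S_n$-action on the given generators) and match them against the claimed decomposition, using that over a field of characteristic zero the multiplicities are determined by these characters.
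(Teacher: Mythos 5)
Your skeleton matches the paper's (differential realization, case analysis on $(\alpha:\beta)$, consequences via derivations and substitutions, finite computations for $(1:1)$ in low arity), but there are two genuine gaps. First, your mechanism for locating the exceptional points is incorrect, and with it the whole genericity argument. You claim the special values are "exactly those where one of these coefficients vanishes or where they coincide": the coefficients of \eqref{eq:3} are $\alpha$ and $\alpha+\beta$, so vanishing does give $(0:1)$ and $(1:-1)$, but coincidence $\alpha=\alpha+\beta$ picks out $(1:0)$ --- which is \emph{generic} --- and nothing in the coefficients of \eqref{eq:3} distinguishes $(1:1)$. In the paper, the locus $\alpha=\beta$ only emerges from determinant computations on systems of derived consequences: the one-variable system \eqref{eq:mat1} has determinant $\alpha^2(\alpha+\beta)$, the seven-by-seven two-variable system \eqref{eq:mat2} has determinant $6\alpha^2(\beta-\alpha)(\beta+\alpha)$ (this is where $(1:1)$ first appears), and the systems \eqref{eq:mat3}, \eqref{eq:mat4} with determinants proportional to $\alpha-\beta$ complete the generic vanishing via $a'''bcd=a''b'cd=a'b'c'd=0$. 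Your phrase ``both relations are independently forced'' is precisely the claim that needs proof, and it cannot be read as an arity-$3$ statement (the quotient still has $\calP_{\alpha,\beta}(3)\cong V_3\oplus V_{2,1}^2$); what must be shown is the vanishing of the arity-$4$ consequences $a'''a^3=a''a'a^2=(a')^3a=0$ and their multilinearizations, which is exactly the linear algebra your plan omits.

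Second, your lower-bound strategy does not work for the infinite families. For $(0:1)$ and $(1:-1)$ the claim that $V_n\oplus V_{n-1,1}$ survives for \emph{all} $n$ is not a finite computation, and ``compute the characters of the quotient and match'' is circular, since those characters are the unknown. The paper's device is to exhibit recognizable further quotients with known dimensions: quotienting $\calP_{0,1}$ by $a(bc)=0$ yields $\NAP^!$, and quotienting $\calP_{1,-1}$ by $(a,b,c)=0$ yields $\Perm$, each with $n$-dimensional arity-$n$ component, giving the matching lower bound (alternatively, one evaluates in explicit algebras, as the paper does for Theorem \ref{th:rel2}). Relatedly, your description of the surviving monomials is garbled --- in the $(0:1)$ case the survivors $a^{(n-1)}a^{n-1}$ have $n-1$ \emph{un}differentiated factors with the derivatives concentrated, while in the $(1:-1)$ case the survivors $(a')^{n-1}a$ have the derivatives spread out --- and the spanning (upper-bound) statement itself requires an induction of the type of Lemma \ref{lm:21conseq}, deducing from $a''b'cd=0$ the vanishing of all monomials with $k_1\ge 2$, $k_2\ge 1$. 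Finally, for $(1:1)$ your finite-computation plan is sound in spirit for arities $4$ and $5$ (the paper uses software there too), but for $n\ge 6$ you still need the reduction that vanishing in arity $6$ forces vanishing in all higher arities, together with the explicit annihilation of every arity-$6$ differential monomial; neither step is present in your proposal.
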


\begin{proof}
Multiplying \eqref{eq:3} by $a'$, we get
\begin{equation}\label{eq:A05}
\alpha a''a'a^2+(\alpha+\beta) (a')^2a=0.
\end{equation}
Taking the derivative of \eqref{eq:3} and multiplying by $a$, we get
\begin{equation}\label{eq:A06}
\alpha (a'''a^3+2a''a'a^2)+(\alpha+\beta) (2a''a'a^2+(a')^3a)=0.
\end{equation}
Applying the derivation $\Delta_{a\mapsto a'a}$ to \eqref{eq:3}, we get 
\begin{equation}\label{eq:A07}
\alpha (a'''a^3+5a''a'a^2)+(\alpha+\beta )(2a''a'a^2+3(a')^3a)=0.
\end{equation}
Overall, we obtain 
\begin{equation}\label{eq:mat1}
\begin{pmatrix}
0&\alpha& \alpha+\beta\\
\alpha&4\alpha+2\beta&\alpha+\beta\\
\alpha&7\alpha+2\beta&3\alpha+3\beta
\end{pmatrix}
\begin{pmatrix}
a'''a^3\\
a''a'a^2\\
(a')^3a
\end{pmatrix}=0.
\end{equation}
Note that
 \[
\det 
\begin{pmatrix}
0&\alpha& \alpha+\beta\\
\alpha&4\alpha+2\beta&\alpha+\beta\\
\alpha&7\alpha+2\beta&3\alpha+3\beta
\end{pmatrix}=
\alpha^2(\alpha+\beta),
 \]
so for $(\alpha:\beta)$ different from $(0:1)$ and $(1:-1)$, we have $a'''a^3=a''a'a^2=(a')^3a=0$. Let us assume for the time being that
$(\alpha:\beta)$ is different from $(0:1)$ and $(1:-1)$, deferring these cases to Propositions \ref{prop:case01} and \ref{prop:case1-1} respectively. Partial multilinearizations of the identities we obtained are the identities
\begin{gather}
b''a'a^2+a''b'a^2+2a''a'ab=0,\label{eq:A16}\\
3(a')^2b'a+(a')^3b=0,\label{eq:A17}\\
b'''a^3+3a'''a^2b=0. \label{eq:A18}
\end{gather}
Multiplying \eqref{eq:3} by $b'$, we get
\begin{equation}\label{eq:A19}
\alpha a''b'a^2+(\alpha+\beta) (a')^2b'a=0.
\end{equation}
Taking the derivative of \eqref{eq:3} and multiplying by $b$, we get
\begin{equation}\label{eq:A20}
\alpha (a'''a^2b+2a''a'ab)+(\alpha+\beta)(2 a''a'ab+(a')^3b)=0.
\end{equation}
Applying the derivation $\Delta_{a\mapsto a'b}$ to \eqref{eq:3}, we get 
\begin{equation}\label{eq:A21}
\alpha (a'''a^2b+2a''b'a^2+b''a'a^2+2a''a'ab)+(\alpha+\beta) (2a''a'ab+2(a')^2b'a+(a')^3b)=0.
\end{equation}
Applying the derivation $\Delta_{a\mapsto b'a}$ to \eqref{eq:3}, we get 
\begin{equation}\label{eq:A22}
\alpha (b'''a^3+2b''a'a^2+3a''b'a^2)+(\alpha+\beta)(2b''a'a^2+3(a')^2b'a)=0.
\end{equation}

Overall, we obtain
\begin{equation}\label{eq:mat2}
\begin{pmatrix}
0&0&2&1&1&0&0\\
0&0&0&0&0&1&3\\
3&1&0&0&0&0&0\\
0&0&0&\alpha&0&0&\alpha+\beta\\
\alpha&0&4\alpha+2\beta&0&0&\alpha+\beta&0\\
\alpha&0&4\alpha+2\beta&2\alpha&\alpha&\alpha+\beta&2\alpha+2\beta\\
0&\alpha&0&3\alpha&4\alpha+2\beta&0&3\alpha+3\beta
\end{pmatrix}
\begin{pmatrix}
a'''a^2b\\
b'''a^3\\
a''a'ab\\
a''b'a^2\\
b''a'a^2\\
(a')^3b\\
b'(a')^2a
\end{pmatrix}=0.
\end{equation}
Note that 
 \[
\det
\begin{pmatrix}
0&0&2&1&1&0&0\\
0&0&0&0&0&1&3\\
3&1&0&0&0&0&0\\
0&0&0&\alpha&0&0&\alpha+\beta\\
\alpha&0&4\alpha+2\beta&0&0&\alpha+\beta&0\\
\alpha&0&4\alpha+2\beta&2\alpha&\alpha&\alpha+\beta&2\alpha+2\beta\\
0&\alpha&0&3\alpha&4\alpha+2\beta&0&3\alpha+3\beta
\end{pmatrix}\\=
6\alpha^2(\beta-\alpha)(\beta+\alpha),
 \]
so if $(\alpha:\beta)$ is additionally different from $(1:1)$, we obtain 
 \[
a'''a^2b=b'''a^3=a''a'ab=a''b'a^2=b''a'a^2=(a')^3b=b'(a')^2a=0.
 \]
Let us additionally assume that $(\alpha:\beta)\ne(1:1)$. From $a'''a^3=a'''a^2b=b'''a^3=0$ one immediately deduces the identity
$a'''bcd=0$, since it is symmetric in $b,c,d$, and hence follows from its versions where at most two letters are different. For the same reason, from $(a')^3a=(a')^3b=b'(a')^2a=0$, one immediately deduces $a'b'c'd=0$.

Applying the derivation $\Delta_{a\mapsto b}$ to $b''a'a^2=0$, one obtains 
\begin{equation}\label{eq:A39}
b''b'a^2+2b''a'ab=0.
\end{equation}
Applying the derivation $\Delta_{a\mapsto b}$ to $a''a'ab=0$, one obtains 
\begin{equation}\label{eq:A40}
b''a'ab+a''b'ab+a''a'b^2=0.
\end{equation}
Applying the derivation $\Delta_{a\mapsto b}$ to $a''b'a^2=0$, one obtains 
\begin{equation}\label{eq:A41}
b''b'a^2+2a''b'ab=0.
\end{equation}
Applying the derivation $\Delta_{a\mapsto b'b}$ to \eqref{eq:3} and using $a'b'c'd=0$, one obtains 
\begin{equation}\label{eq:A42}
\alpha (3b''b'a^2+2a''b'ab)+(\alpha+\beta) (2b''a'ab)=0.
\end{equation}
Overall, we obtain 
\begin{equation}\label{eq:mat3}
\begin{pmatrix}
1&1&1&0\\
0&2&0&1\\
0&0&2&1\\
0&2\alpha&2\alpha+2\beta&3\alpha
\end{pmatrix}
\begin{pmatrix}
a''a'b^2\\
a''b'ab\\
b''a'ab\\
b''b'a^2
\end{pmatrix}=0.
\end{equation}
Since 
 \[
\det
\begin{pmatrix}
1&1&1&0\\
0&2&0&1\\
0&0&2&1\\
0&2\alpha&2\alpha+2\beta&3\alpha
\end{pmatrix}
=4(\alpha-\beta)\ne 0,
 \]
we have $a''a'b^2=a''b'ab=b''a'ab=b''b'a^2=0$.

Applying the derivation $\Delta_{b\mapsto c}$ to $b''a'ab=0$, one obtains 
\begin{equation}\label{eq:A49}
c''a'ab+b''a'ac=0.
\end{equation}
Applying the derivation $\Delta_{b\mapsto c}$ to $b''b'a^2=0$, one obtains 
\begin{equation}\label{eq:A50}
c''b'a^2+b''c'a^2=0.
\end{equation}
Applying the derivation $\Delta_{b\mapsto c}$ to $a''b'ab=0$, one obtains 
\begin{equation}\label{eq:A51}
a''c'ab+a''b'ac=0.
\end{equation}
Applying the derivation $\Delta_{a\mapsto b'c}$ to \eqref{eq:3}, and using $a'''bcd=a'b'c'd=0$ and \eqref{eq:A50}, one obtains 
\begin{equation}\label{eq:A52}
\alpha (b''c'a^2+2a''b'ac)+(\alpha+\beta) (2b''a'ac)=0.
\end{equation}
Applying the derivation $\Delta_{a\mapsto b}$ to \eqref{eq:3}, multiplying by $c'$, and using $a'b'c'd=0$, one obtains, recalling that $\alpha\ne 0$, 
\begin{equation}\label{eq:A54}
b''c'a^2+2a''c'ab=0.
\end{equation}
Applying the derivation $\Delta_{a\mapsto c}$ to $b''a'a^2=0$, one obtains 
\begin{equation}\label{eq:A56}
b''c'a^2+2b''a'ac=0.
\end{equation}
Overall, we have 
\begin{equation}\label{eq:mat4}
\begin{pmatrix}
0&0&1&1&0&0\\
0&0&0&0&1&1\\
1&1&0&0&0&0\\
2\alpha&0&2\alpha+2\beta&0&\alpha&0\\
0&2&0&0&1&0\\
0&0&2&0&1&0
\end{pmatrix}
\begin{pmatrix}
a''b'ac\\
a''c'ab\\
b''a'ac\\
c''a'ab\\
b''c'a^2\\
c''b'a^2
\end{pmatrix}=0
\end{equation}
Since 
  \[
\det
\begin{pmatrix}
0&0&1&1&0&0\\
0&0&0&0&1&1\\
1&1&0&0&0&0\\
2\alpha&0&2\alpha+2\beta&0&\alpha&0\\
0&2&0&0&1&0\\
0&0&2&0&1&0
\end{pmatrix}
=4(\beta-\alpha)\ne 0,
 \]
we have 
 \[
a''b'ac=
a''c'ab=
b''a'ac=
c''a'ab=
b''c'a^2=
c''b'a^2=0.
 \] 
Finally, from 
 \[
a''a'a^2=a''a'ab=a''ba^2=b''a'a^2=a''b'ac=b''a'ac=b''c'a^2=0
 \] 
one immediately deduces $a''b'cd=0$, since it is symmetric in $c,d$, and hence follows from its versions where at most three letters are different. This shows that in the ``generic'' case $\alpha(\alpha-\beta)(\alpha +\beta)\neq 0$ we have $\calP_{\alpha,\beta}(n)=0$ for $n>4$.

We shall now return to the case $(\alpha:\beta)=(1:1)$ that was temporarily put aside. Recall that in this case we have $a'''a^3=a''a'a^2=(a')^3a=0$, and Equation \eqref{eq:mat2} becomes
 \[
\begin{pmatrix}
0&0&2&1&1&0&0\\
0&0&0&0&0&1&3\\
3&1&0&0&0&0&0\\
0&0&0&1&0&0&2\\
1&0&6&0&0&2&0\\
1&0&6&2&1&2&4\\
0&1&0&3&6&0&6
\end{pmatrix}
\begin{pmatrix}
a'''a^2b\\
b'''a^3\\
a''a'ab\\
a''b'a^2\\
b''a'a^2\\
(a')^3b\\
b'(a')^2a
\end{pmatrix}=0.
 \]
Elementary row operations easily give us monomial relations $a'''a^2b=b'''a^3=b''a'a^2=0$, and three slightly more complicated relations, namely
\begin{gather}
a''b'a^2+2a''a'ab=0,\label{eq:A114}\\
a''b'a^2+2b'(a')^2a=0,\label{eq:A115}\\
(a')^3b+3b'(a')^2a. \label{eq:A116}
\end{gather}

To show that $\calP_{1,1}(n)=0$ for $n\ge 6$, it is enough to show that for $n=6$, which in turn would follow from the fact that $a_1^{(k_1)}a_2^{(k_2)}\cdots a_6^{(k_6)}=0$ whenever $k_1+\cdots +k_n=5$, $k_1\geq \ldots \geq k_n\geq 0$. 

First, we note that $a'''a^3=a'''a^2b=b'''a^3=0$ imply
$a'''bcd=0$, since it is symmetric in $b,c,d$, and hence follows from its versions where at most two letters are different. This immediately implies $a^{(3)}b'c'def=0$ (multiplying by derivatives), 
 \[
a^{(4)}bcde=(a'''bcd)'e-(a'''cde)b'-(a'''bde)c'-(a'''bce)d'=0, 
 \]
which in turn implies $a^{(4)}b'cdef=0$ and  
 \[
a^{(5)}bcdef=(a^{(4)}bcde)'f-(a^{(4)}cdef)b'-(a^{(4)}bdef)c'-(a^{(4)}bcef)d'-(a^{(4)}bcdf)e'=0. 
 \]
We also have 
 \[
0=(a'''b'cde)'f=a^{(4)}b'cdef+a'''b''cdef+a'''b'c'def+a'''b'cd'ef+a'''b'cde'f,
 \]
implying $a'''b''cdef=0$. 

If we multiply \eqref{eq:A116} by $a'$ and using $(a')^3a=0$, we obtain $(a')^4b=0$. On the other hand, if we multiply $(a')^3a=0$ by $b'$, we obtain $(a')^3b'a=0$. From $(a')^3a=(a')^4b=(a')^3b'a=0$ one immediately deduces 
$a'b'c'd'e=0$, since it is symmetric in $a,b,c,d$, and hence follows from its versions where at most two letters are different. This implies $a'b'c'd'e'f=0$. 
Substuting $a=a'f$ into $a'b'c'd'e=0$, we obtain
 \[
0=(a'f)'b'c'd'e=a''b'c'd'ef+a'b'c'd'ef',
 \]
implying $a''b'c'd'ef=0$.

Taking the derivative of $a''a'a^2=0$ and multiplying by $a$,  we obtain, using $a'''bcd=0$ and $a''a'a^2=0$,
$(a'')^2a^3=0$. Multiplying $b''a'a^2=0$ by $a'$, we obtain $b''(a')^2a^2=0$, which we can in turn use to simplify the result of applying the derivation $\Delta_{a\mapsto a'a}$ to $b''a'a^2=0$, obtainining $a''b''a^3=0$. Furthermore, this latter relation can be used to simplify the result of applying the derivation $\Delta_{a\mapsto b}$ to $(a'')^2a^3=0$, obtaining $(a'')^2a^2b=0$. Applying the derivation $\Delta_{a\mapsto b'a}$ to $b''a'a^2=0$ and simplifying, we obtain $(b'')^2a^3=0$, which we can use to simplify the result of applying the derivation $\Delta_{a\mapsto b}$ to $a''b''a^3=0$, obtaining $a''b''a^2b=0$. Furthermore, we can use that latter equation to simplify the result of applying the derivation $\Delta_{a\mapsto b}$ to $(a'')^2a^2b=0$, obtaining $(a'')^2ab^2=0$.
From 
 \[
(a'')^2a^3=a''b''a^3=(a'')^2a^2b=(b'')^2a^3=a''b''a^2b=(a'')^2ab^2=0,
 \]
one immediately deduces $a''b''cde=0$, since it is symmetric in $a,b$ and in $c,d,e$, and hence follows from its versions where at most two letters are different. This immediately implies $a''b''c'def=0$. All these identities imply that $\calP_{1,1}(n)=0$ for $n\ge 6$, as required. 

To prove the claims about $\calP_{1,1}(4)$ and $\calP_{1,1}(5)$, some further calculations are needed. In arity $4$, differential Novikov monomials correspond to partitions of $3$, that is $(3), (2,1), (1,1,1)$. We already established that $a'''bcd=0$, so it is enough to consider the submodules generated by $S_4$-orbits of $a''b'cd$ and $a'b'c'd$. In the Novikov operad these monomials generate $S_4$-submodules isomorphic to $V_4\oplus V_{3,1}^2\oplus V_{2,2}\oplus V_{2,1,1}$ and $V_4\oplus V_{3,1}$, respectively. Relations $a''a'a^2=(a')^3a=0$ imply that there are no copies of $V_4$ in $\calP_{\alpha,\beta}$. Relation $b''a'a^2=0$ quotients out one copy of $V_{3,1}$. What remains is precisely $V_{3,1}\oplus V_{2,2}\oplus V_{2,1,1}$, and to conclude that no further elements vanish in the quotient, one may compute the dimension of $\calP_{1,1}(4)$ using the \texttt{albert} program \cite{albert} or the operad Gr\"obner basis calculator \cite{OpGb}. A similar argument applies in arity $5$: differential Novikov monomials correspond to partitions of $4$, that is $(4), (3,1), (2,2), (2,1,1), (1,1,1,1)$, and we already established that $a^{(4)}bcde=a^{(3)}b'cde=a''b''cde=a'b'c'd'e=0$, so we should focus on the $S_5$-orbit of the monomial $a''b'c'de$. In the Novikov operad this monomial generates an $S_5$-submodules isomorphic to $V_5\oplus V_{4,1}^2\oplus V_{3,2}^2\oplus V_{3,1,1}\oplus V_{2,2,1}$. Our previous computations easily show that the versions of this monomial where at most two letters are different vanish in the quotient, implying quotienting out $V_5\oplus V_{4,1}^2\oplus V_{3,2}^2$. Computing the dimension of $\calP_{1,1}(5)$ using the abovementioned software, we find that it is equal to five, implying that it is the module $V_{2,2,1}$ survives in the quotient (since the dimension of $V_{3,1,1}$ is six).  

\smallskip

Let us return to the cases $(\alpha:\beta)=(0:1)$ and $(\alpha:\beta)=(1:-1)$. 

\begin{proposition}\label{prop:case01}
For $(\alpha:\beta)=(0:1)$, we have $\calP_{\alpha,\beta}(n)\cong V_n\oplus V_{n-1,1}$, and these modules are generated by linearizations of $a^{(n-1)}a^{n-1}$ and $a^{(n-1)}ba^{n-2}-b^{(n-1)}a^{n-1}$, respectively.
\end{proposition}

\begin{proof}In this case, Equation \eqref{eq:mat1} becomes
\[
\begin{pmatrix}
0&0& \beta\\
0&2\beta&\beta\\
0&2\beta&3\beta
\end{pmatrix}
\begin{pmatrix}
a'''a^3\\
a''a'a^2\\
(a')^3a
\end{pmatrix}=0,
 \]
implying $a''a'a^2=(a')^3a=0$. Partial multilinearizations of these identities are the identities
\begin{gather}
b''a'a^2+a''b'a^2+2a''a'ab=0,\label{eq:A90}\\
3(a')^2b'a+(a')^3b=0.\label{eq:A91}
\end{gather}
In equation \eqref{eq:mat2}, we should suppress the third row of the matrix since it corresponds to the identity \eqref{eq:A18} that we no longer have, so we get
 \[
\begin{pmatrix}
0&0&2&1&1&0&0\\
0&0&0&0&0&1&3\\
0&0&0&0&0&0&\beta\\
0&0&2\beta&0&0&\beta&0\\
0&0&2\beta&0&0&\beta&2\beta\\
0&0&0&0&2\beta&0&3\beta
\end{pmatrix}
\begin{pmatrix}
a'''a^2b\\
b'''a^3\\
a''a'ab\\
a''b'a^2\\
b''a'a^2\\
(a')^3b\\
b'(a')^2a
\end{pmatrix}=0,
 \]
easily implying $b'(a')^2a=(a')^3b=a''a'ab=b''a'a^2=a''b'a^2=0$. From $(a')^3a=(a')^3b=b'(a')^2a=0$, one immediately deduces $a'b'c'd=0$, since it is symmetric in $a,b,c$, and hence follows from its versions where at most two letters are different. 

Obtaining Equation \eqref{eq:mat3} did not use any relations using third derivatives (exactly the ones that we do not have), and the determinant of the corresponding matrix is equal to $-4\beta\ne 0$, so we obtain as before $a''a'b^2=a''b'ab=b''a'ab=b''b'a^2=0$. Furthermore, obtaining Equation \eqref{eq:mat4} did not use any relations using third derivatives (exactly the ones that we do not have), and the determinant of the corresponding matrix is equal to $4\beta\ne 0$, so we obtain as before 
 \[
a''b'ac=
a''c'ab=
b''a'ac=
c''a'ab=
b''c'a^2=
c''b'a^2=0,
 \] 
and hence also $a''b'cd=0$, since it is symmetric in $c,d$, and hence follows from its versions where at most three letters are different. 

\begin{lemma}\label{lm:21conseq}
In any Novikov algebra, the identity $a''b'cd=0$ implies 
 \[
a_1^{(k_1)}a_2^{(k_2)}\cdots a_n^{(k_n)}=0
 \]
for all $k_1+\cdots +k_n=n-1$, $k_1\geq \ldots \geq k_n\geq 0$ with $k_1\geq 2$, $k_2\geq 1$. 
\end{lemma}
\begin{proof}
We  prove this statement by induction on $n\ge 4$. The basis of induction is the identity $a''b'cd=0$ that we have. To prove the step of induction, we argue as follows. Assume that all such monomials of arity strictly less than $n\geq 5$ vanish, and consider a monomial $u=a_1^{(k_1)}a_2^{(k_2)}\cdots a_n^{(k_n)}$ of arity $n$. Since $k_1+\cdots +k_n=n-1$, $k_1\geq \ldots \geq k_n\geq 0$, we have $k_n=0$. Let us choose the maximal $p$ such that $k_p>0$, and complete the argument by induction on $k_p$. If $k_p=1$, then we can write $u$ as the product of the Novikov monomial $a_1^{(k_1)}a_2^{(k_2)}\cdots a_{i-1}^{(k_{i-1})}a_{i+1}^{(k_{i+1})} \cdots a_n^{(k_n)}$ and $a_i'$, and use the induction hypothesis for smaller $n$. Suppose that $k_p\geq2$. Then $a_1^{(k_1)}\cdots a_i^{(k_i-1)} \cdots a_{n-1}^{(k_{n-1})}$ is a Novikov monomial that vanishes by the induction hypothesis on $n$. Let us substitute $a_p:= a'_pa_n$ into that monomial. We obtain
 \[
a_1^{(k_1)}\cdots a_p^{(k_p)}\cdots a_{n}^{(k_n)}+
\left( \sum_{s=1}^{k_p-1} \binom{k_p-1}{s} a_p^{(k_p-s)} a_{n}^{(s)}\right) a_1^{(k_1)}\cdots a_{i-1}^{(k_{i-1})} a_{i+1}^{(k_{i+1})}\cdots a_{n-1}^{(k_{n-1})}
 \]
Note that for all $s=1,\ldots,k_p-1$ we have $\max(k_p-s,s)<k_p$, so the induction hypothesis applies to those terms, and $a_1^{(k_1)}\cdots a_p^{(k_p)}\cdots a_{n}^{(k_n)}$ vanishes, as needed.
\end{proof}

According to Lemma \ref{lm:21conseq}, we see that if a differential Novikov monomial $a_1^{(k_1)}\cdots a_n^{(k_n)}$ with $n\ge 4$ and $k_1\geq\ldots\geq k_n$ is \emph{a priori} nonzero in $\calP_{0,1}$, then either $k_1=\cdots=k_{n-1}=1$, $k_n=0$ or $k_1=n-1$, $k_2=\cdots=k_n=0$. However, we also have $a'b'c'd=0$, which immediately implies that $a_1'\cdots a_{n-1}'a_n=0$. Overall, this shows that $\calP_{1,-1}(n)$ is spanned by cosets of the $S_n$-orbit of $a_1^{(n-1)}a_2\cdots a_n$, so its dimension is at most $n$. To show that it is exactly $n$, note that the quotient of the operad $\calP_{0,1}(n)$ by the relation $a(bc)=0$ is the operad whose component of arity $n$ is clearly $n$-dimensional (it is the Koszul dual of the operad $\NAP$ of (right) non-associative permutative algebras \cite{MR2244257}). 
\end{proof}

\begin{proposition}\label{prop:case1-1}
For $(\alpha\colon\beta)=(1:-1)$, we have $\calP_{\alpha,\beta}(n)\cong V_n\oplus V_{n-1,1}$, and these modules are generated by   linearizations of $(a')^{n-1}a$ and $(a')^{n-1}b-b'(a')^{n-2}a$, respectively.
\end{proposition}

\begin{proof}
In this case, Equation \eqref{eq:mat1} becomes
 \[
\begin{pmatrix}
0&1&0\\
1&2&0\\
1&5&0
\end{pmatrix}
\begin{pmatrix}
a'''a^3\\
a''a'a^2\\
(a')^3a
\end{pmatrix}=0,
 \]
implying $a'''a^3=a''a'a^2=0$. Partial multilinearizations of these identities are the identities
\begin{gather*}
b'''a^3+3a'''a^2b=0,\\
b''a'a^2+a''b'a^2+2a''a'ab=0.
\end{gather*}
In equation \eqref{eq:mat2}, we should suppress the second row of the matrix since it corresponds to the identity \eqref{eq:A17} that we no longer have, so we get
 \[
\begin{pmatrix}
0&0&2&1&1&0&0\\
0&0&0&0&0&1&3\\
3&1&0&0&0&0&0\\
0&0&0&1&0&0&0\\
1&0&2&0&0&0&0\\
1&0&2&2&1&0&0\\
0&1&0&3&2&0&0
\end{pmatrix}
\begin{pmatrix}
a'''a^2b\\
b'''a^3\\
a''a'ab\\
a''b'a^2\\
b''a'a^2\\
(a')^3b\\
b'(a')^2a
\end{pmatrix}=0,
 \]
easily implying $a'''a^2b=b'''a^3=a''a'ab=a''b'a^2=b''a'a^2=0$. From $a'''a^3=a'''a^2b=b'''a^3=0$ one immediately deduces $a'''bcd=0$, since it is symmetric in $b,c,d$, and hence follows from its versions where at most two letters are different.

Moreover, even though Equations \eqref{eq:mat3} and \eqref{eq:mat4} were obtained using the equation $a'b'c'd=0$ which we no longer have, one notices that this equation is always used with the coefficient $\alpha+\beta$ which vanishes in our case. Since the determinants of the corresponding matrices are proportional to $\alpha-\beta$, they do not vanish, and we obtain as before $a''b'cd=0$. According to Lemma \ref{lm:21conseq}, we see that if a differential Novikov monomial $a_1^{(k_1)}\cdots a_n^{(k_n)}$ with $n\ge 4$ and $k_1\geq\ldots\geq k_n$ is \emph{a priori} nonzero in $\calP_{1,-1}$, then either $k_1=\cdots=k_{n-1}=1$, $k_n=0$ or $k_1=n-1$, $k_2=\cdots=k_n=0$. However, we also have $a'''bcd=0$, which implies by an easy induction using the result of Lemma \ref{lm:21conseq} that $a_1^{(n)}a_2\cdots a_n=0$. Overall, this shows that $\calP_{1,-1}(n)$ is spanned by cosets of the $S_n$-orbit of $a_1'\cdots a_{n-1}'a_n$, so its dimension is at most $n$. To show that it is exactly $n$, note that the quotient of the operad $\calP_{1,-1}(n)$ by the relation $(a,b,c)=0$ is the (left) associative permutative operad $\Perm$ whose component of arity $n$ is well known to be $n$-dimensional. 
\end{proof}
\end{proof}

\subsection{Quotienting out a copy of the two-dimensional module}\label{sec:2dim}

Let $(\gamma:\delta)\in\mathbb{P}^1$. We consider the operad $\calQ_{\gamma,\delta}$ that is the quotient of the Novikov operad by the ideal generated by the identity 
 \[
\gamma((a,a,b)-(b,a,a))+\delta(a(ab)-a(ba))=0.
 \]
In the differential realization, this is the identity
\begin{equation}\label{eq:4}
\gamma (a''ab-b''a^2)+\delta ((a')^2b-a'b'a)=0
\end{equation}
We shall now determine how the $S_n$-module structure of $\calQ_{\gamma,\delta}(n)$ depends on $(\gamma:\delta)$. Clearly, for all $(\gamma:\delta)\in\mathbb{P}^1$, we have 
 \[
\calQ_{\gamma,\delta}(1)\cong V_{1}, \quad \calQ_{\gamma,\delta}(2)\cong V_{2}\oplus V_{1,1}, \quad \calQ_{\gamma,\delta}(3)\cong V_{3}^2\oplus V_{2,1}.
 \]
We shall now prove the following theorem.

\begin{theorem}\label{th:rel2}
For all $(\gamma:\delta)\in\mathbb{P}^1$ and all $n\ge 4$, we have a $S_n$-module isomorphism 
 \[
\calQ_{\gamma,\delta}(n)\cong V_n^2\oplus V_{n-1,1}.
 \]
Moreover,  
\begin{itemize}
\item for $(\gamma:\delta)\ne(0:1)$, these modules are generated by linearizations of $(a')^{n-1}a$, $a''(a')^{n-3}a^2$, and $(a')^{n-1}b-b'(a')^{n-2}a$, respectively, 
\item for $(\gamma:\delta)=(0:1)$, these modules are generated by linearizations of $(a')^{n-1}a$, $a^{(n-1)}a^{n-1}$, and $a^{(n-1)}a^{n-2}b-b^{(n-1)}a^{n-1}$, respectively. 
\end{itemize}
\end{theorem}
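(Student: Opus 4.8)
The plan is to follow the blueprint of the proof of Theorem~\ref{th:rel1}, working throughout in the differential realization and producing new identities from Equation~\eqref{eq:4} by the three standard devices: multiplying by a first derivative, differentiating and multiplying by a letter, and applying a derivation $\Delta_{a\mapsto\,\cdot\,}$. First I would generate, for each relevant multidegree in arity four, enough consequences of~\eqref{eq:4} to assemble a square matrix whose rows are the coordinate vectors of the derived relations in the basis of differential Novikov monomials of that multidegree, and then compute the determinants as homogeneous polynomials in $\gamma,\delta$. Unlike the situation for $\calP_{\alpha,\beta}$, where three exceptional values arose, I expect these determinants to be, up to nonzero scalars, powers of $\gamma$, so that $(0:1)$ is the unique degenerate value; for $\gamma\ne 0$ the matrices are invertible and pin down a definite set of vanishing low-arity monomials, whereas at $\gamma=0$ some rows drop out (the third-derivative relations carrying the factor $\gamma$) and must be replaced by the relation $(a')^2b=a'b'a$.

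Next I would extract a propagation lemma in the spirit of Lemma~\ref{lm:21conseq}. Starting from the basic vanishing relations obtained in arity four, an induction on the arity, substituting $a_p\mapsto a_p'a_n$ or differentiating a shorter monomial and clearing the surplus derivatives by the already-established relations exactly as in Lemma~\ref{lm:21conseq}, should show that every differential Novikov monomial $a_1^{(k_1)}\cdots a_n^{(k_n)}$ vanishes in $\calQ_{\gamma,\delta}$ unless its multiset of derivation orders is $(1^{n-1},0)$ or $(2,1^{n-3},0,0)$ when $\gamma\ne 0$, and $(1^{n-1},0)$ or $(n-1,0^{n-1})$ when $(\gamma:\delta)=(0:1)$. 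Reading off the surviving $S_n$-orbits then bounds the module type from above: in each regime one of the two surviving shapes contributes the full permutation module $V_n\oplus V_{n-1,1}$ while the other contributes only its totally symmetric part $V_n$, so that $\calQ_{\gamma,\delta}(n)$ is a quotient of $V_n^2\oplus V_{n-1,1}$ and $\dim\calQ_{\gamma,\delta}(n)\le n+1$. The generators displayed in the statement are exactly the symmetric monomial and the standard-module difference of the relevant shapes, and the two regimes assign these two roles to different shapes, which is why the generator lists genuinely differ.

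It then remains to prove the matching lower bound $\dim\calQ_{\gamma,\delta}(n)\ge n+1$, and this is where I expect the main difficulty. In contrast with Propositions~\ref{prop:case01} and~\ref{prop:case1-1}, there is no evident auxiliary operad arising as a further quotient that would pin the dimension for free: the permutative operad $\Perm$, for instance, does not satisfy~\eqref{eq:4} when $\delta\ne 0$, since there $a^2b\ne ba^2$, and likewise the Koszul dual of $\NAP$ is not available here. I would therefore establish linear independence of the $n+1$ surviving monomials directly, by showing that the relation insertions attain the expected corank $n+1$ in the arity-$n$ component of the Novikov operad, equivalently that the derived relations form a complete rewriting system whose normal forms are exactly those monomials. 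The exact dimensions in arities four and five can be confirmed with the \texttt{albert} program~\cite{albert} or the operadic Gr\"obner basis calculator~\cite{OpGb}, as in the proof of Theorem~\ref{th:rel1}, after which the normal-form structure propagates the count to all $n$.

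Finally, to avoid repeating the lower bound separately at the degenerate value, I would invoke upper semicontinuity of the quotient dimension: the entries of the relation matrices are linear in $(\gamma,\delta)$, so $\dim\calQ_{\gamma,\delta}(n)$ is minimized at generic parameters and can only jump upward on the closed locus $\gamma=0$. Having shown $\dim\calQ_{\gamma,\delta}(n)=n+1$ for $\gamma\ne 0$ and $\dim\calQ_{0,1}(n)\le n+1$ from the degenerate form of the propagation lemma, we conclude equality at $(0:1)$ as well. The principal obstacles are thus twofold: first, carrying out the propagation lemma and the identification of the surviving submodule of each shape separately in the two regimes, since the generating shapes genuinely differ between $\gamma\ne 0$ and $(0:1)$; and second, the lower bound itself, which, unlike in Theorem~\ref{th:rel1}, is not handed to us by a known quotient operad and must be obtained by a direct normal-form argument.
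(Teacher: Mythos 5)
Your proposal contains a genuine error at its core step. For $\gamma\ne 0$ it is \emph{not} true that every differential Novikov monomial vanishes unless its shape is $(1^{n-1},0)$ or $(2,1^{n-3},0,0)$: already in arity four one has, in the paper's notation, $a'''a^2b=-(2+\delta)a''b'a^2$ (Equation~\eqref{eq:I16}), and $a''b'a^2$ is generically nonzero in $\calQ_{\gamma,\delta}(4)$, so the shape $(3,0,0,0)$ does not die. What is actually true (Lemma~\ref{lem3}) is the weaker statement that a monomial vanishes once \emph{two} letters each carry at least two derivatives; the shapes $(k,1^{n-k-1},0^{k})$ with $k\ge 3$ survive and are eliminated from the count not by vanishing but by the recursion of Lemma~\ref{lem6}, $a^{(n-1)}a^{n-2}b+(n-2+\delta)a^{(n-2)}a^{n-2}b'=0$, which collapses them onto the $k=2$ shape with $\delta$-dependent coefficients. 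This also undermines your determinant strategy in two ways: no invertible matrix of consequences can force vanishing in multidegrees where the quotient is nonzero, and the expectation that the determinants are powers of $\gamma$ is unfounded --- the coefficients $2+\delta$, $n-2+\delta$ above show that factors in $\delta$ necessarily enter, which is precisely why the paper must treat $(\gamma:\delta)=(1:0)$ and $(1:-1)$ separately. (Your description of the degenerate case $(0:1)$, by contrast, matches the paper: Lemma~\ref{lm:symmetryII} gives $a''b'cd=0$ and Lemma~\ref{lm:21conseq} then confines everything to the shapes $(1^{n-1},0)$ and $(n-1,0^{n-1})$.)

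The second gap is the lower bound, which you correctly identify as the crux but do not actually supply: ``showing that the relation insertions attain the expected corank'' via a complete rewriting system is a restatement of the goal, and the claim that low-arity computer checks ``propagate the count to all $n$'' is exactly what a confluence argument would have to prove. The paper's route is different and concrete: for $\delta\notin\{0,-1\}$ it evaluates candidate identities in two explicit algebras in the variety --- the one-dimensional algebra $A$ with $ee=e$ and the two-dimensional algebra $B_\delta$ of Proposition~\ref{prop:Bdelta} --- to show all three modules survive, and it disposes of the exceptional points $(1:0)$, $(1:-1)$, $(0:1)$ by explicit operadic Gr\"obner bases (Corollary~\ref{cor:dim}). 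One genuinely good idea in your proposal is the semicontinuity argument: since the arity-$n$ space of consequences depends polynomially on $(\gamma,\delta)$, the quotient dimension can only jump up at special parameters, so the uniform upper bound $n+1$ plus the generic value $n+1$ would settle \emph{all} exceptional points at once, subsuming the paper's Appendix~C computations; but as it stands this rests on the generic lower bound you have not established.
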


\begin{proof}
Let us first consider a Novikov algebra satisfying the polynomial identity $\gamma (a''ab-b''a^2)+\delta ((a')^2b-a'b'a)=0$ with $(\gamma:\delta)\ne(0:1)$. Without loss of generality, we shall assume that $\gamma=1$ and work with the identity
\begin{equation}\label{eq:I}
a''ab-b''a^2+\delta ((a')^2b-a'b'a)=0.
\end{equation}

\begin{lemma}\label{lm:symmetryI}
If $(\gamma:\delta)\ne(0:1)$, we have 
\begin{equation}
a''b'cd-a''c'bd=0.\label{eq:I62}
\end{equation}
\end{lemma}

Proof of this lemma, once put in a human-readable form, is slightly technical, and is deferred to Appendix \ref{app:symmetryI}.

\begin{lemma}\label{lem3}
Suppose that $k_1+\cdots +k_n=n-1$, $k_1\geq \ldots \geq k_n\geq 0$ and $k_1\geq 2$, $k_2\geq 2$. If $(\gamma:\delta)\ne(0:1)$, we have
 \[
a_1^{(k_1)}a_2^{(k_2)}\cdots a_n^{(k_n)}=0.
 \]
\end{lemma}
\begin{proof}
We  prove this statement by induction on $n\ge 5$. The basis of induction is proved in Lemma \ref{lm:symmetryI}. To prove the step of induction, we argue as follows. Assume that all such monomials of arity strictly less than $n\geq 6$ vanish, and consider a monomial $u=a_1^{(k_1)}a_2^{(k_2)}\cdots a_n^{(k_n)}$ of arity $n$. Since $k_1+\cdots +k_n=n-1$, $k_1\geq \ldots \geq k_n\geq 0$, we have $k_n=0$. Let us choose the maximal $p$ such that $k_p>0$, and complete the argument by induction on $k_p$. If $k_p=1$, then we can write $u$ as the product of the Novikov monomial $a_1^{(k_1)}a_2^{(k_2)}\cdots a_{p-1}^{(k_{p-1})}a_{p+1}^{(k_{p+1})} \cdots a_n^{(k_n)}$ and $a_p'$, and use the induction hypothesis for smaller $n$. Suppose that $k_p\geq2$. Then $a_1^{(k_1)}\cdots a_p^{(k_p-1)} \cdots a_{n-1}^{(k_{n-1})}$ is a Novikov monomial that vanishes by the induction hypothesis on $n$. Let us substitute $a_p:= a'_pa_n$ into that monomial. We obtain
 \[
a_1^{(k_1)}\cdots a_p^{(k_p)}\cdots a_{n}^{(k_n)}+
\left( \sum_{s=1}^{k_p-1} \binom{k_p-1}{s} a_p^{(k_p-s)} a_{n}^{(s)}\right) a_1^{(k_1)}\cdots a_{p-1}^{(k_{p-1})} a_{p+1}^{(k_{p+1})}\cdots a_{n-1}^{(k_{n-1})}
 \]
Note that for all $s=1,\ldots,k_p-1$ we have $\max(k_p-s,s)<k_p$, so the induction hypothesis applies to those terms, and $a_1^{(k_1)}\cdots a_p^{(k_p)}\cdots a_{n}^{(k_n)}$ vanishes, as needed.
\end{proof}

This lemma already implies that for $\gamma\ne 0$, the component $\calQ_{\gamma,\delta}(n)$ with $n\ge 4$ is a sum of $S_n$-submodules spanned by orbits of $a_1^{(k)}a_2'\cdots a_{n-k}'a_{n-k+1}\cdots a_n$ with $1\le k\le n-1$. Let us show that each such submodule is at most $n$-dimensional. For that, we prove the following lemma. 

\begin{lemma}\label{lem5}
If $(\gamma:\delta)\ne(0:1)$, then for all $n\geq 4$, we have
$$
a_{1}^{(n-2)}a'_{2}a_3a_4\cdots a_n-a_{1}^{(n-2)}a_{2}a'_{3}a_4\cdots a_n=0.
$$
\end{lemma}
\begin{proof}
Induction on $n$. The basis of induction is proved in Lemma \ref{lm:symmetryI}. Assume that 
\begin{equation}\label{eq:I65}
u=a_1^{(n-2)}a'_2a_3\cdots a_n-a_1^{(n-2)}a_2a'_3a_4\cdots a_n=0.
\end{equation}
We have 
\begin{multline*}
u'a_{n+1}-\sum_{i=4}^n u(a_1,\ldots,a_{i-1},a_{n+1},a_{i+1},\ldots,a_n)a_i'=\\
a_1^{(n-1)}a'_2a_3\cdots a_na_{n+1}-a_1^{(n-1)}a_2a'_3a_4\cdots a_na_{n+1}+\\
a_1^{(n-2)}a''_2a_3\cdots a_na_{n+1}-a_1^{(n-2)}a_2a''_3a_4\cdots a_na_{n+1},
\end{multline*}
and the last two terms vanish thanks to Lemma \ref{lem3}, proving the step of induction.
\end{proof}

Since we can multiply each such relation by an arbitrary number of derivatives, it follows that the $S_n$-submodule spanned by the orbit of $a_1^{(k)}a_2'\cdots a_{n-k}'a_{n-k+1}\cdots a_n$ is spanned as a vector space by the $n$ elements of the orbit where the $k$-th derivative is applied to $a_i$, $1\leq i\leq n$. In the Novikov operad, each such module contains a copy of $V_n$ and a copy of $V_{n-1,1}$. 

\begin{lemma}\label{lem6}
If $(\gamma:\delta)\ne(0:1)$, then for all $n\geq 4$, we have
 \[
a^{(n-1)}a^{n-2}b+(n-2+\delta)a^{(n-2)}a^{n-2}b'=0.
 \]
\end{lemma}

\begin{proof}
Induction on $n$. Let us establish the basis of induction, which is the equation
\begin{equation}\label{eq:I16}
a'''a^2b+(2 +\delta)a''b'a^2=0.
\end{equation}

Substituting $b:=a'b$ into \eqref{eq:I}, we obtain 
 \[
a''a'ab-a'''a^2b-2a''b'a^2-b''a'a^2+\delta ((a')^3b-(a')^2b'a-a''a'ab)=0.
 \]
Multiplying \eqref{eq:I} by $a'$, we obtain 
 \[
a''a'ab-b''a'a^2+\delta ((a')^3b-(a')^2b'a)=0.
 \]
Subtracting these two, we obtain
\begin{equation}\label{eq:I15}
a'''a^2b+2 a''b'a^2+\delta a''a'ab=0,
\end{equation}
which, thanks to Lemma \ref{lm:sym1aaab}, implies \eqref{eq:I16}.

Let us show how to prove the step of induction. Suppose that
\begin{equation}\label{eq:I67}
a^{(n-1)}a^{n-2}b+(n-2+\delta)a^{(n-2)}b'a^{n-2}=0.
\end{equation}
Taking the derivative of \eqref{eq:I67} and multiplying by $a$ we obtain
\begin{multline*}
a^{(n)}a^{n-1}b+(n-2)a^{(n-1)}a'a^{n-2}b+a^{(n-1)}a^{n-1}b'+\\
(n-2+\delta) (a^{(n-1)}b'a^{n-1}+a^{(n-2)}b''a^{n-1}+(n-2) a^{(n-2)}b'a'a^{n-2})=0. 
\end{multline*}
Since $a^{(n-2)}b''a^{n-1}=0$ because of Lemma \ref{lem3} and 
 \[
(n-2)a^{(n-1)}a'a^{n-2}b+(n-2+\delta)(n-2) a^{(n-2)}b'a'a^{n-2}=0
 \] 
because of the induction hypothesis, we have 
 \[
a^{(n)}a^{n-1}b+(n-1+\delta)a^{(n-1)}b'a^{n-1}=0,
 \]
as needed. 
\end{proof}

Using Lemma \ref{lem5}, we can rewrite the result of Lemma \ref{lem6} as
 \[
a^{(n-1)}a^{n-2}b+(n-2+\delta)a^{(n-2)}a^{n-3}ba'=0,
 \]
or, otherwise speaking, 
 \[
a^{(n-1)}b=-(n-2+\delta)a^{(n-2)}ba^{n-3}a'.
 \]
This equation can be iterated, obtaining  
\begin{multline*}
a^{(n-1)}ba^{n-2}=(n-2+\delta)(n-3+\delta)a^{(n-3)}ba^{n-4}(a')^2=\\
-(n-2+\delta)(n-3+\delta)(n-4+\delta)a^{(n-4)}ba^{n-5}(a')^3=\ldots\\
\ldots=(-1)^{n-3}\prod_{k=2}^{n-2}(n-k+\delta)a^{(2)}ba(a')^{n-3}.
\end{multline*}
This implies that at most two copies of $V_{n-1,1}$ and at most two different copies of $V_{n}$ survive in our quotient. In fact, only one copy of $V_{n-1,1}$ survives. This is true since Equation \eqref{eq:I} implies that
 \[
a^{(2)}ba(a')^{n-3}-b^{(2)}a^2(a')^{n-3}+\delta((a')^{n-1}b-(a')^{n-2}b'a)=0,
 \]
literally relating those two copies.

Finally, let us show that both copies of $V_n$ (generated by linearizations of $(a')^{n-1}a$ and $a''(a')^{n-3}a^2$), and the copy of $V_{n-1,1}$ (generated by $(a')^{n-1}b-b'(a')^{n-2}a$) survive in the quotient. For that, we shall consider two particular Novikov algebras. The first one, $A$, is the one-dimensional simple Novikov algebra: it has a basis $e$ such that $ee=e$; clearly, this algebra belongs to the variety we consider. The second one is the algebra $B_\delta$ with a basis $e,f$ and the multiplication table
 \[
ee=0, \quad ef=-\delta e,\quad
fe=e, \quad ff=f.
 \]
It is proved in Proposition \ref{prop:Bdelta} that $B_\delta$ is a Novikov algebra that belongs to the variety we consider. We note that the identity $(a')^{n-1}b-b'(a')^{n-2}a=0$ corresponds to the identity $a (a\cdots (ab)\cdots)=b(a(a\cdots aa^2\cdots))$ in Novikov algebras. If we consider the algebra $B_\delta$ and set $a=f$, $b=e$, we get $e=-\delta e$, which is false for $\delta\ne -1$, so the submodule $V_{n-1,1}$ survives in the quotient. 
Suppose we have $\lambda (a')^{n-1}a+\mu a''(a')^{n-3}a^2=0$ in the quotient. We note that this identity corresponds to the identity
\begin{equation}\label{eq:suspect}
\lambda a(a\cdots aa^2)+\mu a(a\cdots (a(a,a,a))\cdots)=0
\end{equation}
in Novikov algebras. If we consider the algebra $A$ and set $a=e$, Equation \eqref{eq:suspect} becomes $\lambda e=0$, so $\lambda=0$. In the algebra $B_\delta$, we have 
\begin{gather*}
(e+f)(e+f)=(-\delta+1)e+f,\\
(e+f)((-\delta+1)e+f)=(-\delta-\delta+1)e+f=(-2\delta+1)e+f,\\
(e+f)((-2\delta+1)e+f)=(-\delta-2\delta+1)e+f=(-3\delta+1)e+f,
\end{gather*}
and by induction we easily obtain $L_{e+f}^n(e+f)=(-n\delta+1)e+f$. On the other hand, 
\begin{gather*}
((-\delta+1)e+f)(e+f)=(-\delta(-\delta+1)+1)e+f=(\delta^2-\delta+1)e+f,\\
(e+f,e+f,e+f)=(\delta^2+\delta)e,\\
(e+f)(e+f,e+f,e+f)=(\delta^2+\delta)(e+f)e=(\delta^2+\delta)e,
\end{gather*}
and by induction we easily obtain $R_{e+f}^k(e+f,e+f,e+f)=(\delta^2+\delta)e$.
Thus, if we consider the algebra $B_\delta$ and set $a=e+f$, Equation \eqref{eq:suspect} becomes $\lambda((-n\delta+1)e+f)+\mu (\delta^2+\delta)e=0$. Since we already established that $\lambda=0$, this implies $\mu=0$ for  $\delta\notin\{0,-1\}$.

The cases $(\gamma:\delta)=(1:-1)$ and $(\gamma:\delta)=(1:0)$ need to be considered separately. The linear independence of the corresponding modules follows from Corollary \ref{cor:dim} established using operadic Gr\"obner bases.

Let us now consider the case $(\gamma:\delta)=(0:1)$, that is the identity
\begin{equation}\label{eq:II}
(a')^2b-a'b'a=0.
\end{equation}

\begin{lemma}\label{lm:symmetryII}
If $(\gamma:\delta)=(0:1)$, we have 
\begin{equation}
a''b'cd=0.\label{eq:IIa}
\end{equation}
\end{lemma}

Proof of this lemma, once put in a human-readable form, is slightly technical, and is deferred to Appendix \ref{app:symmetryII}.

Using Lemmas \ref{lm:symmetryII} and \ref{lm:21conseq}, we conclude that for $(\gamma:\delta)=(0:1)$, we have  
 \[
a_1^{(k_1)}a_2^{(k_2)}\cdots a_n^{(k_n)}=0
 \]
whenever $k_1+\cdots +k_n=n-1$, $k_1\geq \ldots \geq k_n\geq 0$ and $k_1\geq 2$, $k_2\geq 1$. This means that for $n\ge 4$, it is enough to consider the submodules generated by $S_n$-orbits of $a_1^{(n-1)}a_2\cdots a_n$ and $a_1'\cdots a_{n-1}'a_n$, each at most $n$-dimensional. Moreover, multiplying \eqref{eq:II} by several copies of $a'$, we obtain $(a')^{n-1}b-(a')^{n-2}b'a$, proving that the copy of $V_{n-1,1}$ in the second of these submodule vanishes. Finally, to show that both copies of $V_n$ (generated by linearizations of $(a')^{n-1}a$ and $a^{(n-1)}a^{n-1}$), and the copy of $V_{n-1,1}$ (generated by $a^{(n-1)}a^{n-2}b-b^{(n-1)}a^{n-1}$) survive in the quotient, one can use Corollary \ref{cor:dim} established using operadic Gr\"obner bases. 
\end{proof}

\subsection{Combining the two identities}\label{sec:both}

For $\rho=((\alpha:\beta),(\gamma:\delta))\in\mathbb{P}^1\times\mathbb{P}^1$, let us denote by $\calO_\rho$ the quotient of the operad of Novikov algebras by the ideal generated by the identities
\begin{gather*}
\alpha a''a^2+(\alpha+\beta) (a')^2a =0,\\
\gamma (a''ab-b''a^2)+\delta ((a')^2b-a'b'a)=0.
\end{gather*}
We shall now determine how the $S_n$-module structure of $\calO_{\rho}(n)$ depends on $\rho$. Clearly, for all $\rho\in\mathbb{P}^1\times\mathbb{P}^1$, we have 
 \[
\calO_\rho(1)\cong V_{1},\quad \calO_\rho(2)\cong V_{2}\oplus V_{1,1},\quad \calO_\rho(3)\cong V_{3}\oplus V_{2,1}.
 \]

We shall now use Theorems \ref{th:rel1} and \ref{th:rel2} together and prove the following result.

\begin{theorem}\label{th:rel12}
Let $\rho\in\mathbb{P}^1\times\mathbb{P}^1$, and let $n\ge 4$. The $S_n$-module structure of $\calO_{\rho}(n)$ is described as follows:
\begin{itemize}

\item for $\rho=((0:1),(0:1))$, we have $\calO_\rho(n)=V_n\oplus V_{n-1,1}$. Moreover, $\calO_\rho\cong \NAP^!$,

\item for $\rho=((1:-1),(\gamma:\delta))$ with $\delta\ne 0$, we have $\calO_\rho(n)=V_n$, 

\item for $\rho=((1:-1),(1:0))$, we have $\calO_\rho(n)=V_n\oplus V_{n-1,1}$. Moreover, $\calO_\rho\cong \Perm$,

\item in all other cases, we have $\calO_\rho(n)=0$.
\end{itemize}
\end{theorem}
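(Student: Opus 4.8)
The plan is to combine Theorems~\ref{th:rel1} and~\ref{th:rel2}, using that $\calO_\rho$ is the quotient of the Novikov operad by the \emph{sum} of the two ideals defining $\calP_{\alpha,\beta}$ and $\calQ_{\gamma,\delta}$; consequently $\calO_\rho$ is at the same time a quotient of $\calP_{\alpha,\beta}$ and of $\calQ_{\gamma,\delta}$, so for every $n$ and every partition $\pi$ the multiplicity of $V_\pi$ in $\calO_\rho(n)$ is at most the minimum of its multiplicities in $\calP_{\alpha,\beta}(n)$ and in $\calQ_{\gamma,\delta}(n)$. This already settles most cases. By Theorem~\ref{th:rel1} we have $\calP_{\alpha,\beta}(n)=0$ for $n\ge 4$ unless $(\alpha:\beta)\in\{(0:1),(1:1),(1:-1)\}$, and then $\calO_\rho(n)=0$ too. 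For $(\alpha:\beta)=(1:1)$ we have $\calP_{1,1}(n)=0$ for $n\ge 6$; the module $\calP_{1,1}(5)\cong V_{2,2,1}$ shares no irreducible constituent with $\calQ_{\gamma,\delta}(5)\cong V_5^2\oplus V_{4,1}$, so $\calO_\rho(5)=0$; and in arity four the only common constituent of $\calP_{1,1}(4)\cong V_{3,1}\oplus V_{2,2}\oplus V_{2,1,1}$ and $\calQ_{\gamma,\delta}(4)\cong V_4^2\oplus V_{3,1}$ is $V_{3,1}$, whose vanishing in the common quotient I would confirm by a single dimension count via operadic Gröbner bases (as in Corollary~\ref{cor:dim}). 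Thus only $(\alpha:\beta)\in\{(0:1),(1:-1)\}$ remain.

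For those two values the method is to work inside $\calP_{\alpha,\beta}$ and track the explicit generators of $\calQ_{\gamma,\delta}(n)$ furnished by Theorem~\ref{th:rel2}: such a generator dies in $\calO_\rho(n)$ as soon as it already vanishes in $\calP_{\alpha,\beta}(n)$, and if all of them die then $\calO_\rho(n)=0$. The key is Lemma~\ref{lm:21conseq}: in both $\calP_{0,1}$ and $\calP_{1,-1}$ we have $a''b'cd=0$, so every differential monomial with $k_1\ge 2,\ k_2\ge 1$ vanishes, killing the generator $a''(a')^{n-3}a^2$. For $(\alpha:\beta)=(0:1)$ we moreover have $a'b'c'd=0$, which annihilates the two remaining generators $(a')^{n-1}a$ and $(a')^{n-1}b-b'(a')^{n-2}a$ as well; hence for $(\gamma:\delta)\ne(0:1)$ all three generators of $\calQ_{\gamma,\delta}(n)$ die and $\calO_\rho(n)=0$. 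For $\rho=((0:1),(0:1))$ I would instead observe that imposing identity~\eqref{eq:II} on $\calP_{0,1}$ forces $a(ab)=a(ba)=0$, hence $a(bc)=0$ after polarization; since the two terms of the imposed relation are themselves specializations of $a(bc)$, the ideal imposed on $\calP_{0,1}$ coincides with the one generated by $a(bc)$, and the proof of Proposition~\ref{prop:case01} identifies the corresponding quotient with $\NAP^!$. As $\NAP^!(n)$ is $n$-dimensional and $\calO_\rho(n)$ is a quotient of $\calP_{0,1}(n)\cong V_n\oplus V_{n-1,1}$ (itself $n$-dimensional), the quotient map is an isomorphism and $\calO_\rho(n)\cong V_n\oplus V_{n-1,1}$.

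The value $(\alpha:\beta)=(1:-1)$ is where the two identities genuinely interact, and $\calP_{1,-1}(n)\cong V_n\oplus V_{n-1,1}$ is spanned by the $S_n$-orbit of $a_1'\cdots a_{n-1}'a_n$, with $(a')^{n-1}a$ generating the copy of $V_n$ and $(a')^{n-1}b-b'(a')^{n-2}a$ generating $V_{n-1,1}$. I would then specialize, inside $\calP_{1,-1}$, the consequence of~\eqref{eq:I} relating the two copies of $V_{n-1,1}$ that was derived in the proof of Theorem~\ref{th:rel2}: its second-derivative terms vanish by Lemma~\ref{lm:21conseq}, so it collapses to $\delta\big((a')^{n-1}b-b'(a')^{n-2}a\big)=0$. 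When $\delta\ne 0$ this kills $V_{n-1,1}$, leaving $\calO_\rho(n)\subseteq V_n$; and $V_n$ does survive because the one-dimensional simple algebra $A$ (with $ee=e$) lies in the variety and sends $(a')^{n-1}a$ to $e\ne 0$, so $\calO_\rho(n)\cong V_n$. When $\delta=0$ the collapsed relation is vacuous, so $\calO_\rho(n)$ can only be cut down from $\calP_{1,-1}(n)\cong V_n\oplus V_{n-1,1}$; here I would argue that $\Perm$ is a quotient of $\calO_\rho$ (the imposed relation $(a,a,b)-(b,a,a)=0$ is a specialization of the associator, so it lies in the ideal that Proposition~\ref{prop:case1-1} uses to present $\Perm$ as a quotient of $\calP_{1,-1}$), and since $\Perm(n)$ is $n$-dimensional this surjection is an isomorphism, giving $\calO_\rho\cong\Perm$ and $\calO_\rho(n)\cong V_n\oplus V_{n-1,1}$.

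The routine part is the bookkeeping of which generators vanish, which is immediate once Lemma~\ref{lm:21conseq} is in hand. The genuinely delicate points are the \emph{positive} statements that the surviving constituents do not collapse further, and I expect these to be the main obstacle. For the two boundary operads they are handled structurally, by exhibiting $\NAP^!$ and $\Perm$ as quotients of $\calO_\rho$ of the correct total dimension, so that the quotient maps must be isomorphisms; for the family with $\delta\ne 0$ they are handled by evaluation on the explicit test algebra $A$; and the one remaining finite ambiguity, the copy of $V_{3,1}$ in arity four when $(\alpha:\beta)=(1:1)$, is settled by the operadic Gröbner basis dimension computation of Corollary~\ref{cor:dim}.
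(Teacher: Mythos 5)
Your overall architecture is exactly the paper's: the same common-quotient multiplicity bound, the same case split over $(\alpha:\beta)$, and in the cases $\rho=((0:1),(0:1))$, $\rho=((1:-1),(1:0))$, $\rho=((1:-1),(\gamma:\delta))$ with $\delta\ne 0$, and $\rho=((0:1),(\gamma:\delta))$ with $\gamma\ne 0$ your arguments coincide with the paper's up to cosmetic variation. (For instance, for $(1:-1)$ with $\delta\ne 0$ the paper simply multiplies \eqref{eq:4} by $a'$ and uses the vanishing of all other orbits in $\calP_{1,-1}$ to get $(a')^3b-(a')^2b'a=0$, rather than invoking the iterated consequence from the proof of Theorem~\ref{th:rel2}; note that the consequence you cite was derived under the normalization $\gamma=1$, so for the sub-case $(\gamma:\delta)=(0:1)$ you should instead multiply \eqref{eq:II} by $(a')^{n-3}$ directly — a trivial repair. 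The paper certifies survival of $V_n$ there via the quotient onto $\Com$, which is equivalent to your evaluation on the algebra $A$; and for $((1:-1),(1:0))$ the paper notes the two identities imply $(a,b,c)=0$ outright, giving ideal equality, where you use one containment plus a dimension count — both are fine.)

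The one genuine soft spot is the arity-$4$ step for $(\alpha:\beta)=(1:1)$. A ``single dimension count via operadic Gr\"obner bases'' does not settle the vanishing of the copy of $V_{3,1}$, because $(\gamma:\delta)$ ranges over $\mathbb{P}^1$: a Gr\"obner computation is performed at a fixed (or generic) parameter value, and dimensions of quotients are only upper semicontinuous, so they may jump at special parameters — indeed, Corollary~\ref{cor:dim} exists in the paper precisely because the generic argument of Theorem~\ref{th:rel2} degenerates at the three special values $(1:-1),(1:0),(0:1)$, each of which required its own computation. The paper closes this case without any computation: by Theorem~\ref{th:rel1}, the submodule of $\calP_{1,1}(4)$ isomorphic to $V_{3,1}$ is spanned by the linearizations of $a''a'ab-a''b'a^2$, and this element vanishes in the common quotient uniformly in the parameter — by Lemma~\ref{lm:symmetryI} (the identity $a''b'cd-a''c'bd=0$) when $\gamma\ne 0$, and by Lemma~\ref{lm:symmetryII} (the identity $a''b'cd=0$) when $\gamma=0$. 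Substituting this explicit-generator argument for your deferred computation closes the gap; everything else in your proposal stands.
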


\begin{proof}

Suppose that $\rho=((0:1),(0:1))$, so that our identities are $aa^2=0$ and $a(ab)-ba^2=0$. These imply $a(bc)=0$, and hence the discussion in the proof of Theorem \ref{th:rel1} implies $\calO_\rho\cong \NAP^!$. 

Suppose that $\rho=((1:-1),(1:0))$, so that our identities are $(a,a,a)=0$ and $(a,a,b)-(b,a,a)=0$. These imply $(a,b,c)=0$, and hence the discussion in the proof of Theorem \ref{th:rel1} implies $\calO_\rho\cong \Perm$. 

Suppose that $\rho=((1:-1),(\gamma:\delta))$ with $\delta\ne 0$. We already know from the proof of Theorem \ref{th:rel1} that the module $\calP_{1,-1}(n)$ is  spanned by cosets of the $S_n$-orbit of $a_1'\cdots a_{n-1}'a_n$, and that for $n\ge 4$ all other orbits of differential Novikov monomials vanish in $\calP_{1,-1}(n)$. Now, if we multiply 
 \[
\gamma (a''ab-b''a^2)+\delta ((a')^2b-a'b'a)=0
 \]
by $a'$ and simplify using the abovementioned vanishing, we obtain, since $\delta\ne 0$,
 \[
(a')^3b-(a')^2b'a=0,
 \] 
which implies that the quotient is at most one-dimensional (and spanned by the linearization of $(a')^{n-1}a$). It is clear that the one-dimensional module survives in the quotient, since in this case the operad $\Com$ of commutative algebras is a quotient of $\calO_\rho$. 

Let us complete the proof by showing that in all other cases we have $\calO_\rho(n)=0$ for $n\ge 4$.

Suppose that $\rho=((1:1),(\gamma:\delta))$. We know from Theorem \ref{th:rel1} that $\calP_{\alpha,\beta}(n)=0$ for $n\ge 6$, and also that 
\begin{gather*}
\calP_{\alpha,\beta}(4)\cong V_{3,1}\oplus V_{2,2}\oplus V_{2,1,1},\\
\calP_{\alpha,\beta}(5)\cong V_{2,2,1}.
\end{gather*}
At the same time, we know from Theorem \ref{th:rel2} that $\calQ_{\gamma,\delta}(n)\cong V_n\oplus V_{n-1,1}$. Since $\calO_\rho$ is a common quotient of these two operads, we clearly have $\calO_\rho(5)=0$. In arity $4$, we may \emph{a priori} have a copy of $V_{3,1}$. However, we know from Theorem \ref{th:rel1} that the submodule of $\calP_{\alpha,\beta}(4)$ isomorphic to $V_{3,1}$ is spanned by linearizations of $a''a'ab-a''b'a^2$. However, for $\gamma\ne 0$, according to Lemma \ref{lm:symmetryI}, we have $a''b'cd-a''c'bd=0$, implying $a''a'ab-a''b'a^2=0$, and for $\gamma=0$, according to Lemma \ref{lm:symmetryII}, we have $a''b'cd=0$, implying $a''a'ab-a''b'a^2=0$.

Suppose that $\rho=((0:1),(\gamma:\delta))$ with $\gamma\ne 0$. We already know from the proof of Theorem \ref{th:rel1} that $a''a'a^2=a'b'c'd=0$ in $\calP_{\alpha,\beta}$. At the same time, we know from Theorem \ref{th:rel2} that for $\gamma\ne 0$, the module $\calQ_{\gamma,\delta}(n)$ is spanned by linearizations of $(a')^{n-1}a$, $a''(a')^{n-3}a^2$, and $(a')^{n-1}b-b'(a')^{n-2}a$. It follows that all these linearizations vanish in $\calO_\rho(n)$.

Finally, if $\rho=((\alpha:\beta),(\gamma:\delta))$ with $\alpha(\alpha-\beta)(\alpha+\beta)\ne 0$, we have $\calP_{\alpha,\beta}(n)=0$, so the same is true for the component $\calO_\rho(n)=0$ of the quotient operad $\calO_\rho$. 
\end{proof}

\subsection{The main theorem}\label{sec:mainth}

We are now able to prove the main result of this article.

\begin{theorem}\label{th:maindistr}
The lattice of subvarieties of a variety of Novikov algebras is distributive if and only if all algebras of that variety satisfy the identities
 \[
\alpha a^2a+\beta aa^2,\quad
\gamma((a,a,b)-(b,a,a))+\delta(a(ab)-ba^2)
 \]
for some $((\alpha:\beta),(\gamma:\delta))\in\mathbb{P}^1\times\mathbb{P}^1$.
\end{theorem}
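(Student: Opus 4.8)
The plan is to use the distributivity criterion of Proposition~\ref{prop:distr}, which reduces the problem to a question about multiplicity-freeness of the $S_n$-modules $\calO_{\mathfrak{M}}(n)$. The key structural input is that the arity-three component of the Novikov operad decomposes as $V_3^2\oplus V_{2,1}^2$, so each of the irreducibles $V_3$ and $V_{2,1}$ already appears with multiplicity two. Consequently, in order for the lattice of subvarieties to stand any chance of being distributive, the defining identities of $\mathfrak{M}$ must already force a quotient in which each of these two multiplicities drops to at most one. This is exactly what the two one-parameter families of degree-three identities in the statement accomplish: the identity $\alpha a^2a+\beta aa^2$ quotients out a copy of the trivial-type module sitting inside the $V_3^2$ part, and the identity $\gamma((a,a,b)-(b,a,a))+\delta(a(ab)-ba^2)$ quotients out a copy of $V_{2,1}$. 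So the first step is to argue that \emph{any} distributive variety must satisfy such identities for some parameters $((\alpha:\beta),(\gamma:\delta))\in\mathbb{P}^1\times\mathbb{P}^1$: distributivity in arity three already forces the existence of the two projective parameters.

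For the converse direction, which is the substantive half, I would invoke the detailed classification already carried out in Theorems~\ref{th:rel1}, \ref{th:rel2}, and especially \ref{th:rel12}. The point is that the combined operad $\calO_\rho$ obtained by imposing \emph{both} degree-three identities has, by Theorem~\ref{th:rel12}, been computed explicitly: in every case its components $\calO_\rho(n)$ are multiplicity-free (they are $0$, or $V_n$, or $V_n\oplus V_{n-1,1}$, the latter two being manifestly multiplicity-free). Thus once a variety satisfies both families of degree-three identities for some $\rho$, the operad describing it is a further quotient of $\calO_\rho$, hence its arity-$n$ components are subquotients of multiplicity-free $S_n$-modules and therefore themselves multiplicity-free. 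Applying Proposition~\ref{prop:distr} then yields distributivity. The crucial conceptual claim here is that imposing the two degree-three identities already suppresses \emph{all} higher-arity obstructions to multiplicity-freeness; this is precisely the content of Theorem~\ref{th:rel12}, and it is the non-general phenomenon (contrasted with the Lie case in the introduction) that makes the theorem work.

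I expect the main obstacle to lie in the forward direction: showing that distributivity forces the degree-three identities to take exactly the stated parametric form. One must argue that the two multiplicities of $V_3$ (respectively $V_{2,1}$) in arity three each have to be cut down, and that the only way a variety can do this is by satisfying an identity lying in the appropriate one-dimensional projective family. Here I would make the degrees-of-freedom count precise: among the degree-three identities there is a two-parameter space of relations projecting onto each isotypic component, and distributivity forces the variety to contain a line in each, giving the two $\mathbb{P}^1$ parameters. One subtlety to handle carefully is that the identities in the statement are written in the ordinary (non-differential) Novikov presentation, whereas Theorems~\ref{th:rel1}--\ref{th:rel12} are phrased in the differential realization via the Dzhumadildaev--L\"ofwall embedding; I would need to verify that the translation between the two presentations matches the parameters $(\alpha:\beta)$ and $(\gamma:\delta)$ consistently, so that the two halves of the argument genuinely refer to the same families. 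Once that bookkeeping is in place, the theorem follows by combining Proposition~\ref{prop:distr} with the explicit module computations.
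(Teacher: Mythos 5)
Your proposal is correct and follows essentially the same route as the paper: the forward direction is the paper's appeal to Proposition~\ref{prop:distr} applied to the decomposition of the arity-three component of the Novikov operad as $V_3^2\oplus V_{2,1}^2$ (forcing a line, i.e.\ a point of $\mathbb{P}^1$, in each two-dimensional multiplicity space), and the converse is the paper's one-line invocation of Theorem~\ref{th:rel12}, since every component $\calO_\rho(n)$ listed there is multiplicity-free and this property passes to all further quotients. The translation bookkeeping you flag between the ordinary and the differential presentations is exactly what the paper carries out at the start of Sections~\ref{sec:triv} and \ref{sec:2dim}, where the identities are rewritten as \eqref{eq:3} and \eqref{eq:4} with matching parameters.
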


\begin{proof}
First of all, since the arity $3$ component of the Novikov operad is, as a $S_3$-module, isomorphic to $V_3^2\oplus V_{2,1}^2$, Proposition \ref{prop:distr} ensures that identities of this form are satisfied in $\mathfrak{M}$. Moreover, if those identities are satisfied, Theorem \ref{th:rel12} implies that the corresponding lattice is distributive. 
\end{proof}

For completeness, let us describe the corresponding distributive lattices. We shall do it by displaying the diagrams that indicate which of the modules of identities imply the other ones. 

If $\rho=((0:1),(0:1))$, the corresponding diagram is 
\vspace{0.7cm}
 \[
\begin{picture}(20,80)
\put(0,0){\circle*{3}} \put(0,20){\circle*{3}}
\put(0,40){\circle*{3}} \put(0,60){\circle*{3}}
\put(0,80){\circle*{3}}
\put(0,85){\circle*{1}} \put(0,90){\circle*{1}}
\put(0,95){\circle*{1}} 
\put(15,85){\circle*{1}} \put(15,90){\circle*{1}}
\put(15,95){\circle*{1}}
\put(30,20){\circle*{3}} \put(30,40){\circle*{3}}
\put(30,60){\circle*{3}} \put(30,80){\circle*{3}}
\put(30,85){\circle*{1}} \put(30,90){\circle*{1}}
\put(30,95){\circle*{1}}
\put(0,0){\vector(0,2){20}}\put(-12,-3){\tiny{$V_1$}}
\put(0,20){\vector(0,2){20}}\put(-12,17){\tiny{$V_2$}}
\put(0,40){\vector(0,2){20}}\put(-12,37){\tiny{$V_3$}}
\put(-12,57){\tiny{$V_4$}} 
\put(0,60){\vector(0,2){20}} \put(-12,77){\tiny{$V_5$}}
\put(0,0){\vector(3,2){30}}\put(35,19){\tiny{$V_{1,1}$}}
\put(30,20){\vector(0,2){20}}\put(35,39){\tiny{$V_{2,1}$}}
\put(30,40){\vector(0,2){20}}
\put(35,59){\tiny{$V_{3,1}$}}
\put(30,60){\vector(0,2){20}}
\put(35,79){\tiny{$V_{3,1}$}}
\put(0,20){\vector(3,2){30}}  
\put(0,40){\vector(3,2){30}} 
\put(30,20){\vector(-3,2){30}}
\put(30,40){\vector(-3,2){30}}
\put(0,60){\vector(3,2){30}} \put(30,60){\vector(-3,2){30}}
\put(0,60){\vector(3,2){30}} 
\end{picture}
 \]
The fact that either $V_n$ or $V_{n-1,1}$ implies $V_{n+1}\oplus V_{n,1}$ follows from the observation that if we substitute $a_1:=a_1'a_{n+1}$ instead of $a_1$ in the linearization of either the symmetrization of $a_1^{(n-1)}a_2\cdots a_n$ or $a_1^{(n-1)}a_2\cdots a_n-a_2^{(n-1)}a_1\cdot a_n$, the result clearly generates $\calO_\rho(n+1)$ as an $S_{n+1}$-module, since we know that most differential Novikov monomials vanish, and the result is proportional to $a_1^{(n)}a_2\cdots a_na_{n+1}$. 

If $\rho=((1:-1),(1:0))$, the corresponding diagram is 
\vspace{0.7cm}
 \[
\begin{picture}(20,80)
\put(0,0){\circle*{3}} \put(0,20){\circle*{3}}
\put(0,40){\circle*{3}} \put(0,60){\circle*{3}}
\put(0,80){\circle*{3}}
\put(0,85){\circle*{1}} \put(0,90){\circle*{1}}
\put(0,95){\circle*{1}} 
\put(15,85){\circle*{1}} \put(15,90){\circle*{1}}
\put(15,95){\circle*{1}}

\put(30,20){\circle*{3}} \put(30,40){\circle*{3}}
\put(30,60){\circle*{3}} \put(30,80){\circle*{3}}
\put(30,85){\circle*{1}} \put(30,90){\circle*{1}}
\put(30,95){\circle*{1}}

\put(0,0){\vector(0,2){20}}\put(-12,-3){\tiny{$V_1$}}
\put(0,20){\vector(0,2){20}}\put(-12,17){\tiny{$V_2$}}
\put(0,40){\vector(0,2){20}}\put(-12,37){\tiny{$V_3$}}
\put(-12,57){\tiny{$V_4$}} 
\put(0,60){\vector(0,2){20}} \put(-12,77){\tiny{$V_5$}}

\put(0,0){\vector(3,2){30}}\put(35,19){\tiny{$V_{1,1}$}}
\put(30,20){\vector(0,2){20}}\put(35,39){\tiny{$V_{2,1}$}}
\put(30,40){\vector(0,2){20}}
\put(35,59){\tiny{$V_{3,1}$}}
\put(30,60){\vector(0,2){20}}
\put(35,79){\tiny{$V_{4,1}$}}

\put(0,20){\vector(3,2){30}}  
\put(0,40){\vector(3,2){30}} 
\put(0,60){\vector(3,2){30}}
\put(0,60){\vector(3,2){30}} 
\end{picture}
 \]
The fact that $V_n$ implies $V_{n+1}\oplus V_{n,1}$ follows from noting that, if we denote by $u$ the linearization of $a(a')^{n-1}$, the product $u'a_{n+1}$ clearly generates $\calO_\rho(n+1)$ as an $S_{n+1}$-module, since we know that most differential Novikov monomials vanish, and the result is proportional to $a_1'\cdots a_n'a_{n+1}$. The fact that $V_{n-1,1}$ implies $V_{n,1}$ is proved by a similar calculation. The fact that $V_{m,1}$ does not imply $V_n$ is clear from the fact that our operad admits the operad $\Com$ as a quotient. 

If $\rho=((1:1),(\gamma:\delta))$, the corresponding diagram is 
\vspace{0.7cm}
 \[
\begin{picture}(20,80)
\put(0,0){\circle*{3}} \put(0,20){\circle*{3}}
\put(0,40){\circle*{3}} \put(0,60){\circle*{3}}
\put(0,80){\circle*{3}}
\put(0,85){\circle*{1}} \put(0,90){\circle*{1}}
\put(0,95){\circle*{1}} 
\put(30,20){\circle*{3}} \put(30,40){\circle*{3}}
\put(0,0){\vector(0,2){20}}\put(-12,-3){\tiny{$V_1$}}
\put(0,20){\vector(0,2){20}}\put(-12,17){\tiny{$V_2$}}
\put(0,40){\vector(0,2){20}}\put(-12,37){\tiny{$V_3$}}
\put(-12,57){\tiny{$V_4$}} 
\put(0,60){\vector(0,2){20}} \put(-12,77){\tiny{$V_5$}}
\put(0,0){\vector(3,2){30}}\put(35,19){\tiny{$V_{1,1}$}}
\put(30,20){\vector(0,2){20}}\put(35,39){\tiny{$V_{2,1}$}}
\put(0,20){\vector(3,2){30}}  
\end{picture}
 \]
The fact that $V_{1,1}$ and $V_{2,1}$ do not imply $V_n$ is clear from the fact that our operad admits the operad $\Com$ as a quotient.

In all other cases the corresponding diagram is, of course,
 \[
\begin{picture}(20,50)
\put(0,0){\circle*{3}} \put(0,20){\circle*{3}}
\put(0,40){\circle*{3}} 
\put(30,20){\circle*{3}} \put(30,40){\circle*{3}}
\put(0,0){\vector(0,2){20}}\put(-12,-3){\tiny{$V_1$}}
\put(0,20){\vector(0,2){20}}\put(-12,17){\tiny{$V_2$}}
\put(-12,37){\tiny{$V_3$}}
\put(0,0){\vector(3,2){30}}\put(35,19){\tiny{$V_{1,1}$}}
\put(35,39){\tiny{$V_{2,1}$}}
\put(0,20){\vector(3,2){30}} 
\put(30,20){\vector(-3,2){30}} \put(30,20){\vector(0,2){20}}
\end{picture}
 \]

\section{Koszul operads with the Novikov operad as a quotient}\label{sec:koszulthm}

In this section, we study the Koszul property of quotients of the Novikov operad. Let us start by remarking that the elements $a^2a$ and $aa^2$ considered in Theorem \ref{th:rel1}, once multilinearized, become
\begin{gather*} 
(a_1a_2)a_3+(a_2a_3)a_1+(a_3a_1)a_2+(a_2a_1)a_3+(a_3a_2)a_1+(a_1a_3)a_2),\\
a_1(a_2a_3)+a_2(a_3a_1)+a_3(a_1a_2)+a_1(a_3a_2)+a_2(a_1a_3)+a_3(a_2a_1).
\end{gather*}
Because of the Novikov identities, we have 
\begin{multline*}
(a_2a_1)a_3+(a_3a_2)a_1+(a_1a_3)a_2=\\
(a_2,a_1,a_3)+a_2(a_1a_3)+(a_3,a_2,a_1)+a_3(a_2a_1)+(a_1,a_3,a_2)+a_1(a_3a_2)=\\
(a_2,a_3,a_1)+a_1(a_2a_3)+(a_3,a_1,a_2)+a_2(a_3a_1)+(a_1,a_2,a_3)+a_3(a_1a_2)=\\
(a_1a_2)a_3+(a_2a_3)a_1+(a_3a_1)a_2,
\end{multline*}
and 
 \[
a_1(a_3a_2)+a_2(a_1a_3)+a_3(a_2a_1)=a_1(a_2a_3)+a_2(a_3a_1)+a_3(a_1a_2),
 \]
and so in the Novikov operad the former elements are proportional to 
\begin{gather*} 
(a_1a_2)a_3+(a_2a_3)a_1+(a_3a_1)a_2,\\
a_1(a_2a_3)+a_2(a_3a_1)+a_3(a_1a_2).
\end{gather*}
Also, the multilinearizations of the elements $(a,a,b)-(b,a,a)$ and $a(ab)-a(ba)$ considered in Theorem \ref{th:rel12}, are
\begin{gather*}
(a_1,a_2,a_3)+(a_2,a_1,a_3)-(a_3,a_1,a_2)-(a_3,a_2,a_1),\\
a_1(a_2a_3)+a_2(a_1a_3)-a_3(a_1a_2)-a_3(a_2a_1),\\
\end{gather*}
which, modulo the Novikov identities, simplify to 
\begin{gather*}
(a_1,a_2,a_3)+(a_2,a_1,a_3)-2(a_3,a_1,a_2),\\
2a_1(a_2a_3)-a_3(a_1a_2)-a_3(a_2a_1).
\end{gather*}

Finally, we record the polarized presentation of the Novikov operad it is proved by a direct calculation, and we omit the proof).

\begin{proposition}
The polarized presentation of the Novikov operad exhibits it as the quotient by the ideal generated by the elements 
\begin{gather*}
[[a_1,a_2],a_3]+[[a_2,a_3],a_1]+[[a_3,a_1],a_2],\\
[a_1,a_2]\cdot a_3+[a_2,a_3]\cdot a_1+[a_3,a_1]\cdot a_2,\\
2[a_1,a_2]\cdot a_3-[a_1\cdot a_3,a_2]-[[a_1,a_2],a_3]-[a_1,a_2\cdot a_3], \\
(a_1\cdot a_2)\cdot a_3-a_1\cdot (a_2\cdot a_3)-[a_1,a_3]\cdot a_2
\end{gather*}
\end{proposition}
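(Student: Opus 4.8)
Denote the four displayed generators by $R_1,R_2,R_3,R_4$, in the order listed. The plan is to treat the passage to the polarized generators $a_1\cdot a_2=a_1a_2+a_2a_1$ and $[a_1,a_2]=a_1a_2-a_2a_1$ as what it is, a linear change of generating basis: both $\{a_1a_2\}$ and $\{a_1\cdot a_2,[a_1,a_2]\}$ span the same arity-two component, namely the regular $S_2$-module $V_2\oplus V_{1,1}$, so the free operad on either generating set is the same operad $\calF$, with $a_ia_j=\tfrac12(a_i\cdot a_j+[a_i,a_j])$. It therefore suffices to show that the $S_3$-submodule $W$ of $\calF(3)$ spanned by the $S_3$-orbits of $R_1,R_2,R_3,R_4$ coincides with the submodule $W_N$ spanned by the orbits of the multilinearized Novikov relations $N_1=(a_1,a_2,a_3)-(a_1,a_3,a_2)$ and $N_2=a_1(a_2a_3)-a_2(a_1a_3)$. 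Everything happens in arity three, so this is a comparison of two subspaces of the $12$-dimensional space $\calF(3)$ with monomial basis $(a_{\sigma(1)}a_{\sigma(2)})a_{\sigma(3)}$ and $a_{\sigma(1)}(a_{\sigma(2)}a_{\sigma(3)})$, $\sigma\in S_3$.

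First I would pin down the module type of $W_N$. As an $S_3$-module one has $\calF(3)\cong 2\,\mathbb{k}[S_3]\cong 2V_3\oplus 4V_{2,1}\oplus 2V_{1,1}$ (each of the two planar tree shapes contributes a free orbit), while the Novikov operad has arity-three component $V_3^2\oplus V_{2,1}^2$; hence $W_N\cong 2V_{2,1}\oplus 2V_{1,1}$, of dimension $6$. In particular the entire sign-isotypic part of $\calF(3)$ lies in $W_N$. Next I would establish the inclusion $W\subseteq W_N$ by substituting $a_ia_j=\tfrac12(a_i\cdot a_j+[a_i,a_j])$ into each $R_i$, expanding in the monomial basis above, and checking that the result reduces to $0$ modulo the orbits of $N_1,N_2$. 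This is the bulk of the ``direct calculation'': once the polarization is substituted it is routine linear algebra (or a single row reduction) in the fixed $12$-dimensional space.

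It remains to prove $W=W_N$, for which, given $W\subseteq W_N$ and $\dim W_N=6$, it suffices to show $\dim W\ge 6$. Here I would argue by symmetry type. The Jacobiator $R_1$ and the cyclic sum $R_2$ are both totally antisymmetric under $S_3$ (each changes sign under $a_1\leftrightarrow a_2$), so they lie in the sign part; since, written in the polarized basis, $R_1$ involves only double-bracket monomials $[[\,\cdot\,,\,\cdot\,],\,\cdot\,]$ and $R_2$ only monomials $[\,\cdot\,,\,\cdot\,]\cdot\,\cdot$, they are manifestly linearly independent and together span the full $2V_{1,1}$. The elements $R_3,R_4$ have mixed symmetry, lie in $W_N$, and hence have vanishing $V_3$-component; the point to verify is that their $V_{2,1}$-components are two independent copies inside the multiplicity-two isotypic space $2V_{2,1}$ of $W_N$, so that their orbits generate all of it. Concretely this reduces to checking that the images of $R_3,R_4$ under a fixed projection onto the $V_{2,1}$-multiplicity space are linearly independent, i.e. a single nonvanishing $2\times2$ determinant. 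This last independence is the only genuinely non-formal step, and is where I expect the main (though still elementary) obstacle to lie; once it is confirmed we get $\dim W\ge 2+4=6$, whence $W=W_N$ and the polarized presentation is established. All the remaining checks are bookkeeping in a fixed $12$-dimensional space, which is precisely why the verification can be relegated to a direct calculation.
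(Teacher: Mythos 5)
Your plan is sound, and in fact there is nothing in the paper to compare it against: the authors state the proposition with the remark that ``it is proved by a direct calculation, and we omit the proof.'' Your proposal is a correct and well-organized way to carry out precisely that calculation. The bookkeeping checks out: $\calF(3)\cong 2\,\k[S_3]\cong 2V_3\oplus 4V_{2,1}\oplus 2V_{1,1}$; the decomposition $V_3^2\oplus V_{2,1}^2$ of the Novikov operad's arity-three component (which the paper cites) gives $W_N\cong 2V_{2,1}\oplus 2V_{1,1}$ of dimension $6$; and $R_1,R_2$ are indeed two independent sign vectors (each changes sign under $a_1\leftrightarrow a_2$, is cyclically invariant, and they live in disjoint monomial types $[[-,-],-]$ and $[-,-]\cdot -$), so they span the full sign-isotypic part. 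Two caveats. First, the steps you defer --- the expansion showing $W\subseteq W_N$ and the independence check for $R_3,R_4$ --- \emph{are} the entire content of the omitted direct calculation, so your text is a proof outline rather than a proof until they are executed. Second, your reduction of the last step to ``a single nonvanishing $2\times2$ determinant'' needs slightly more care than stated: what must be verified is that the supports $M_3,M_4$ (the smallest subspaces with the $V_{2,1}$-component of $R_i$ lying in $V_{2,1}\otimes M_i$; by Burnside's theorem the orbit of $R_i$ generates exactly $V_{2,1}\otimes M_i$ in the isotypic part) jointly span the two-dimensional multiplicity space of $W_N$. A single fixed projection onto a multiplicity space can annihilate a nonzero isotypic component, so either choose that projection generically or check the combined support directly. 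Alternatively, and most robustly, the whole argument collapses to one rank computation in $\k^{12}$: row-reduce the at most $24$ orbit vectors of $R_1,\dots,R_4$ in the monomial basis, verify rank $6$, and verify containment in the rank-$6$ span of the orbits of $N_1,N_2$; your representation-theoretic organization is a nice way to shrink that computation, but it is not needed for correctness.
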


Since the kernel of the quotient map is generated by an $S_3$-submodule of the arity three component, there are nine cases to consider: the multiplicity of the trivial module may be equal to $0$, $1$, or $2$, and the multiplicity of the irreducible two-dimensional module may be equal to $0$, $1$, or $2$.
\begin{proposition}[Multiplicities $(0,0)$]\label{prop:first}
The quotient of the Novikov operad by the zero ideal is not Koszul. 
\end{proposition}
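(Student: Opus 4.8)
The plan is to apply the numerical (Euler characteristic) obstruction to Koszulness recorded just before Proposition~\ref{prop:positivity}. Writing $\calN$ for the Novikov operad, which is quadratic and generated by a single binary operation in homological degree zero, Koszulness would force the compositional identity $-f_{\calN^!}(-f_{\calN}(t))=t$, and I would exhibit a coefficient where this identity fails.

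First I would pin down the Hilbert series $f_{\calN}(t)$. By the Dzhumadildaev--L\"ofwall realization recalled in Section~\ref{sec:distrthm}, a multilinear basis of $\calN(n)$ is given by the Novikov differential monomials $a_{\sigma(1)}^{(i_1)}\cdots a_{\sigma(n)}^{(i_n)}$ with $i_1+\cdots+i_n=n-1$. Since the underlying product is commutative, such a monomial is determined by the assignment to each of the $n$ variables of its number of derivatives, so $\dim\calN(n)$ equals the number of nonnegative integer solutions of $i_1+\cdots+i_n=n-1$, namely $\binom{2n-2}{n-1}$. As a sanity check this yields $1,2,6$ in arities $1,2,3$, matching the $S_3$-decomposition $V_3^2\oplus V_{2,1}^2$ of $\calN(3)$. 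Hence
\[
f_{\calN}(t)=\sum_{n\ge1}\frac{1}{n!}\binom{2n-2}{n-1}t^n=t+t^2+t^3+\tfrac56 t^4+\tfrac7{12}t^5+\cdots.
\]
I would then invoke the self-duality $\calN^!\cong\calN$ noted in the introduction, so that $f_{\calN^!}=f_{\calN}$ and the Koszulness test reduces to $-f_{\calN}(-f_{\calN}(t))=t$. Expanding this composite through degree five gives $-f_{\calN}(-f_{\calN}(t))=t+\tfrac16 t^5+O(t^6)$; since the degree-five coefficient is nonzero, the required identity fails and $\calN$ is not Koszul.

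The one conceptual point requiring care --- and the reason the weaker positivity test of Proposition~\ref{prop:positivity} is useless here --- is that for a self-dual operad the compositional inverse predicted by Koszulness is exactly $-f_{\calN}(-t)$, whose coefficients automatically carry the alternating signs demanded by the positivity test. Thus positivity holds to all orders irrespective of Koszulness, and one genuinely needs the full identity $-f_{\calN^!}(-f_{\calN}(t))=t$ to see the obstruction. Concretely, the compositional inverse of $f_{\calN}$ begins $t-t^2+t^3-\tfrac56 t^4+\tfrac5{12}t^5-\cdots$, differing from $-f_{\calN}(-t)=t-t^2+t^3-\tfrac56 t^4+\tfrac7{12}t^5-\cdots$ first in the degree-five coefficient ($\tfrac5{12}$ versus $\tfrac7{12}$); this is precisely the discrepancy producing the nonzero $\tfrac16 t^5$ term.

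I expect the main (and only) labor to be the routine but error-prone bookkeeping of the fifth-order composition of power series, together with the brief justification of the dimension count $\dim\calN(n)=\binom{2n-2}{n-1}$; once these are in place the conclusion is immediate, and the result agrees with Dzhumadildaev's theorem cited in the introduction.
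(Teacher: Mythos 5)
Your argument is correct, and all the numbers check out: the Dzhumadildaev--L\"ofwall realization does give $\dim\calN(n)=\binom{2n-2}{n-1}$ (the multilinear Novikov differential monomials $a_1^{(i_1)}\cdots a_n^{(i_n)}$ with $i_1+\cdots+i_n=n-1$ are pairwise distinct commutative monomials, hence a basis), so $f_{\calN}(t)=t+t^2+t^3+\tfrac56t^4+\tfrac7{12}t^5+\cdots$, and I have verified that with $f_{\calN^!}=f_{\calN}$ one indeed gets $-f_{\calN}(-f_{\calN}(t))=t+\tfrac16t^5+O(t^6)$, killing Koszulness. The comparison with the paper is somewhat degenerate: the paper's entire proof of this proposition is the citation ``This is established by Dzhumadildaev \cite{MR2834140}'', and what you have written is essentially a reconstruction of that cited argument (dimension formula, self-duality of the Novikov operad as asserted in the introduction, failure of the Ginzburg--Kapranov functional equation at order five). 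So you take the same route as the literature the paper defers to; your version buys self-containedness at the cost of the power-series bookkeeping, and correctly identifies why the authors' workhorse Proposition~\ref{prop:positivity} cannot be used for this one case among the nine.

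One caveat on your side remark: the claim that ``positivity holds to all orders irrespective of Koszulness'' is not justified. Self-duality guarantees alternating signs for the \emph{predicted} inverse $-f_{\calN}(-t)$, but says nothing automatic about the \emph{actual} compositional inverse of $f_{\calN}$, which is the series the positivity test examines; you have only checked its signs through degree five (where the coefficient $+\tfrac5{12}$ indeed passes the test). Fortunately this remark is decorative rather than load-bearing --- your proof rests solely on the verified failure of the full identity $-f_{\calN^!}(-f_{\calN}(t))=t$ at $t^5$, which is sound --- but as written the universal claim should be weakened to ``the positivity test is inconclusive at least through the order computed''.
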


\begin{proof}
This is established by Dzhumadildaev \cite{MR2834140}.
\end{proof}

\begin{proposition}[Multiplicities $(1,0)$]\label{prop:second}
The quotient of the Novikov operad by the ideal generated by 
 \[
\alpha((a_1a_2)a_3+(a_2a_3)a_1+(a_3a_1)a_2)+\beta(a_1(a_2a_3)+a_2(a_3a_1)+a_3(a_1a_2))
 \]
is not Koszul for any $(\alpha:\beta)\in\mathbb{P}^1$. 
\end{proposition}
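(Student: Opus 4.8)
The plan is to apply the positivity test of Ginzburg--Kapranov (Proposition~\ref{prop:positivity}). By the discussion preceding this proposition, the generator of the defining ideal is, modulo the Novikov identities, proportional to the multilinearization of $\alpha a^2a+\beta aa^2$, so the operad in question is precisely the operad $\calP_{\alpha,\beta}$ of Section~\ref{sec:triv}, and Theorem~\ref{th:rel1} describes all of its components. Since $\calP_{\alpha,\beta}$ is a quotient of the Novikov operad by an ideal generated in arity three, it is a quadratic operad generated by one binary operation in homological degree zero, so the test applies: were $\calP_{\alpha,\beta}$ Koszul, the coefficients $a_n$ of the compositional inverse of its Poincar\'e series would satisfy $(-1)^{n-1}a_n\ge 0$. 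I would therefore read off the dimensions $\dim\calP_{\alpha,\beta}(n)$ from Theorem~\ref{th:rel1}, form the Poincar\'e series, invert it, and exhibit a coefficient whose sign violates the test.

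First I would record the parameter-independent part: in arities one, two, three the components have dimensions $1,2,5$, so $f_{\calP_{\alpha,\beta}}(t)=t+t^2+\frac56t^3+O(t^4)$ and its compositional inverse begins $t-t^2+\frac76t^3+\cdots$ for every $(\alpha:\beta)$. The higher terms split into three cases. In the generic case $\alpha(\alpha-\beta)(\alpha+\beta)\ne 0$ with $(\alpha:\beta)\ne(1:1)$ one has $\calP_{\alpha,\beta}(n)=0$ for $n\ge 4$, so $f=t+t^2+\frac56t^3$; inverting gives $a_5=-\frac{17}{12}$, and $(-1)^{5-1}a_5<0$ breaks the test. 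For $(\alpha:\beta)\in\{(0:1),(1:-1)\}$ one has $\calP_{\alpha,\beta}(n)\cong V_n\oplus V_{n-1,1}$ of dimension $n$ for $n\ge 4$, which sums to the closed form $f=te^t+\frac13t^3$; here $a_5=-\frac{11}{24}<0$, again violating the test. For $(\alpha:\beta)=(1:1)$ the operad is finite-dimensional with $\dim\calP_{\alpha,\beta}(4)=8$ and $\dim\calP_{\alpha,\beta}(5)=5$, so $f=t+t^2+\frac56t^3+\frac13t^4+\frac1{24}t^5$; here $a_5=\frac{13}{24}>0$ is harmless, but the next coefficient is $a_6=\frac{35}{24}$, and $(-1)^{6-1}a_6<0$ breaks the test. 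In each case $\calP_{\alpha,\beta}$ is thus not Koszul.

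The computations are routine power-series inversions, so there is no conceptual obstacle; the point requiring genuine attention is the case analysis, since the offending coefficient sits at a different arity depending on the parameter --- at $n=5$ generically and in the two $V_n\oplus V_{n-1,1}$ cases, but only at $n=6$ for $(1:1)$, the fifth coefficient there being innocuous. I would also double-check the passage from the $S_n$-module data of Theorem~\ref{th:rel1} to dimensions (using $\dim V_{n-1,1}=n-1$, $\dim V_{3,1}=\dim V_{2,1,1}=3$, $\dim V_{2,2}=2$, $\dim V_{2,2,1}=5$) before inverting, as a single dimension slip would invalidate the sign conclusion.
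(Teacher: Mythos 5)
Your proposal is correct and follows essentially the same route as the paper: identify the quotient with $\calP_{\alpha,\beta}$, read off the Poincar\'e series in the three cases from Theorem~\ref{th:rel1}, and apply the positivity test of Proposition~\ref{prop:positivity}, exhibiting exactly the same offending coefficients ($-\tfrac{17}{12}$ at $t^5$ generically, $-\tfrac{11}{24}$ at $t^5$ for $(0:1)$ and $(1:-1)$, and $\tfrac{35}{24}$ at $t^6$ for $(1:1)$). Your closed form $te^t+\tfrac13 t^3$ and the dimension bookkeeping are correct (and in fact fix a small typo in the paper, where $\sum_{k\ge 4}\frac{t^k}{(k-1)}$ should read $\sum_{k\ge 4}\frac{t^k}{(k-1)!}$).
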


\begin{proof}
This is precisely the operad $\calP_{\alpha,\beta}$ studied in Theorem \ref{th:rel1}, where it is established that its Poincaré series is equal to 
\begin{itemize}
\item $t+t^2+\frac56t^3+\sum_{k\ge 4}\frac{t^k}{(k-1)}$ for $(\alpha:\beta)=(0:1)$ and $(\alpha\colon\beta)=(1:-1)$, and the compositional inverse of this series has a negative coefficient $-\frac{11}{24}$ at $t^5$,
\item $t+t^2+\frac56t^3+\frac{1}{3}t^4+\frac{1}{24}t^5$ for $(\alpha\colon\beta)=(1:1)$, and the compositional inverse of this series has a positive coefficient $\frac{35}{24}$ at $t^6$,
\item $t+t^2+\frac56t^3$ otherwise, and the compositional inverse of this series has a negative coefficient $-\frac{17}{12}$ at $t^5$.
\end{itemize}
In all of the above cases, by Proposition \ref{prop:positivity}, this operad is not Koszul.
\end{proof}

\begin{proposition}[Multiplicities $(2,0)$]\label{prop:third}
The quotient of the Novikov operad by the ideal generated by 
 \[
(a_1a_2)a_3+(a_2a_1)a_3+(a_3a_1)a_2,\qquad
a_1(a_2a_3)+a_2(a_3a_1)+a_3(a_1a_2)
 \]
is not Koszul.
\end{proposition}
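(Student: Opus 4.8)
The plan is to mimic the proof of Proposition~\ref{prop:second}: determine the Poincar\'e series of this quotient, compute its compositional inverse, and invoke the positivity test of Proposition~\ref{prop:positivity}. Write $\calO$ for the quotient operad in question.

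First I would show that $\calO(n)=0$ for all $n\ge 4$. The defining ideal is generated by the two elements displayed above, which together span the whole isotypic component of the trivial module (both copies of $V_3$) in arity three; in particular it contains the multilinearization of $\alpha a^2a+\beta aa^2$ for every $(\alpha:\beta)\in\mathbb{P}^1$. By Proposition~\ref{prop:sym-id} this multilinear element generates the same operadic ideal as the multihomogeneous identity $\alpha a^2a+\beta aa^2$, so $\calO$ is a quotient of $\calP_{\alpha,\beta}$ for each $(\alpha:\beta)$. Taking $(\alpha:\beta)$ with $\alpha(\alpha-\beta)(\alpha+\beta)\ne 0$, Theorem~\ref{th:rel1} gives $\calP_{\alpha,\beta}(n)=0$ for $n\ge 4$, and hence $\calO(n)=0$ for $n\ge 4$. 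Since neither copy of $V_{2,1}$ was quotiented out, $\calO(3)\cong V_{2,1}^2$ is four-dimensional, and the Poincar\'e series is the polynomial $f_\calO(t)=t+t^2+\tfrac23 t^3$.

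Next I would compute the compositional inverse $g(t)=\sum_{n\ge 1}a_nt^n$ of $f_\calO$, most efficiently via Lagrange inversion starting from $s/f_\calO(s)=(1+s+\tfrac23 s^2)^{-1}$, so that $a_n=\tfrac1n[s^{n-1}](1+s+\tfrac23 s^2)^{-n}$. The first coefficients $a_1=1$, $a_2=-1$, $a_3=\tfrac43$, $a_4=-\tfrac53$, $a_5=\tfrac43$ all satisfy the sign constraint of the positivity test, so the computation must be pushed one step further, to $a_6=\tfrac16[s^5](1+s+\tfrac23 s^2)^{-6}=\tfrac{14}{9}$.

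Finally, since $n=6$ is even, one has $(-1)^{n-1}a_6=-\tfrac{14}{9}<0$, contradicting the inequality of Proposition~\ref{prop:positivity}; hence $\calO$ is not Koszul. I expect the only genuine subtlety to be the vanishing $\calO(n)=0$ for $n\ge 4$, which turns $f_\calO$ into an honest polynomial; once that is established, evaluating the inverse series far enough to reach the sign violation at degree six is entirely mechanical, exactly as in Proposition~\ref{prop:second}.
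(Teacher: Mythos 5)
Your proof is correct and follows essentially the same route as the paper: both establish that the quotient vanishes in arities $n\ge 4$ using Theorem~\ref{th:rel1}, obtain the Poincar\'e series $t+t^2+\tfrac23 t^3$, and apply the positivity test of Proposition~\ref{prop:positivity} to the coefficient $\tfrac{14}{9}$ at $t^6$ of the compositional inverse. The only (harmless, and arguably cleaner) difference is the justification of the vanishing: you note that the quotient maps onto from $\calP_{\alpha,\beta}$ for any generic $(\alpha:\beta)$ with $\alpha(\alpha-\beta)(\alpha+\beta)\ne 0$, where Theorem~\ref{th:rel1} gives vanishing directly, whereas the paper quotients in two steps, first by the copy of the trivial module corresponding to $(1:1)$ and then by the remaining copy.
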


\begin{proof}
If we take the quotient by both copies of the trivial representation, we can first take the quotient by the copy corresponding to $(\alpha:\beta)=(1:1)$, which is one-dimensional in each arity starting from $4$, and then by the remaining copy. It follows that our operad vanishes from the arity $4$ onwards, and its Poincaré series is $t+t^2+\frac{2}{3}t^3$. Its compositional inverse has a positive coefficient $\frac{14}9$ at $t^6$. By Proposition \ref{prop:positivity}, this operad is not Koszul.
\end{proof}

\begin{proposition}[Multiplicities $(0,1)$]\label{prop:fourth}
The quotient of the Novikov operad by the ideal generated by 
 \[
\gamma((a_1,a_2,a_3)+(a_2,a_1,a_3)-2(a_3,a_1,a_2))+\delta(2a_1(a_2a_3)-a_3(a_1a_2)-a_3(a_2a_1))
 \]
is not Koszul. 
\end{proposition}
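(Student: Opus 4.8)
The plan is to follow the same strategy used in Propositions~\ref{prop:second} and~\ref{prop:third}: since the proposed quotient is a quadratic operad generated by a single binary operation of homological degree zero, we may apply the positivity test of Proposition~\ref{prop:positivity}. The first step is to identify the operad in question as $\calQ_{\gamma,\delta}$, the quotient of the Novikov operad by one copy of the two-dimensional irreducible module studied in Theorem~\ref{th:rel2}. Indeed, the multilinear element $\gamma((a_1,a_2,a_3)+(a_2,a_1,a_3)-2(a_3,a_1,a_2))+\delta(2a_1(a_2a_3)-a_3(a_1a_2)-a_3(a_2a_1))$ is precisely the multilinearization, modulo the Novikov identities, of the degree-three identity $\gamma((a,a,b)-(b,a,a))+\delta(a(ab)-a(ba))$ generating $\calQ_{\gamma,\delta}$, as recorded in the discussion at the start of this section.

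Once the identification is made, Theorem~\ref{th:rel2} hands us the dimensions of all components for free: for every $(\gamma:\delta)\in\mathbb{P}^1$ and every $n\ge 4$ we have $\calQ_{\gamma,\delta}(n)\cong V_n^2\oplus V_{n-1,1}$, so $\dim\calQ_{\gamma,\delta}(n)=2+(n-1)=n+1$ for $n\ge 4$, together with the known small-arity values $\dim\calQ_{\gamma,\delta}(1)=1$, $\dim\calQ_{\gamma,\delta}(2)=2$, and $\dim\calQ_{\gamma,\delta}(3)=\dim(V_3^2\oplus V_{2,1})=2+2=4$. This determines the Poincaré series
\[
f(t)=t+t^2+\tfrac{4}{6}t^3+\sum_{n\ge 4}\frac{n+1}{n!}t^n=t+t^2+\tfrac{2}{3}t^3+\sum_{n\ge 4}\frac{n+1}{n!}t^n,
\]
which is independent of the parameter $(\gamma:\delta)$. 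The second step is then a purely mechanical computation of the compositional inverse of $f(t)$ up to a low order in $t$, exhibiting a coefficient whose sign violates the alternation $(-1)^{n-1}a_n\ge 0$ required by Proposition~\ref{prop:positivity}; by analogy with the earlier propositions the first offending coefficient should appear around $t^5$ or $t^6$, and I would simply record its value.

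I expect the main obstacle to be a conceptual rather than a computational one: the statement must hold \emph{uniformly} for every $(\gamma:\delta)\in\mathbb{P}^1$, including the degenerate values $(1:-1)$, $(1:0)$, and $(0:1)$ that required separate treatment inside the proof of Theorem~\ref{th:rel2}. The key observation that dissolves this difficulty is that the Poincaré series of $\calQ_{\gamma,\delta}$ does \emph{not} depend on $(\gamma:\delta)$, since Theorem~\ref{th:rel2} gives the same $S_n$-module $V_n^2\oplus V_{n-1,1}$ in every case; hence a single compositional-inverse computation settles all parameter values at once. The only point requiring care is to confirm that the identification with $\calQ_{\gamma,\delta}$ is genuinely valid for all $(\gamma:\delta)$ and that the generator is quadratic and of homological degree zero, so that Proposition~\ref{prop:positivity} applies verbatim; granting this, non-Koszulness follows for the entire family.
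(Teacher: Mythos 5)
Your proposal is correct and follows essentially the same route as the paper: identify the quotient as the operad $\calQ_{\gamma,\delta}$ of Theorem~\ref{th:rel2}, use the parameter-independent dimensions $\dim\calQ_{\gamma,\delta}(n)=n+1$ to write the single Poincar\'e series $t+t^2+\sum_{k\ge 3}\frac{(k+1)t^k}{k!}$, and apply the positivity test of Proposition~\ref{prop:positivity} uniformly in $(\gamma:\delta)$. The only adjustment needed is quantitative: the first sign violation occurs not at $t^5$ or $t^6$ but at $t^8$, where the compositional inverse has the positive coefficient $\frac{13667}{5760}$, so your mechanical computation must be carried a couple of orders further than you anticipated, after which the argument concludes exactly as you describe.
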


\begin{proof}
This is the operad $\calQ_{\gamma,\delta}$ studied in Theorem \ref{th:rel2}, where it is established that for all $(\gamma:\delta)$, we have $\dim\calQ_{\gamma,\delta}(n)=n+1$, so the Poincaré series of our operad is given by 
 \[
t+t^2+\sum_{k\ge 3}\frac{(k+1)t^k}{k!}.
 \]
Its compositional inverse has a positive coefficient $\frac{13667}{5760}$ at $t^8$. By Proposition \ref{prop:positivity}, this operad is not Koszul.
\end{proof}

\begin{proposition}[Multiplicities $(1,1)$]\label{prop:fifth}
The quotient of the Novikov operad by the ideal generated by 
\begin{gather*}
\alpha((a_1a_2)a_3+(a_2a_1)a_3+(a_3a_1)a_2)+\beta(a_1(a_2a_3)+a_2(a_3a_1)+a_3(a_1a_2)),\\
\gamma((a_1,a_2,a_3)+(a_2,a_1,a_3)-2(a_3,a_1,a_2))+\delta(2a_1(a_2a_3)-a_3(a_1a_2)-a_3(a_2a_1))
\end{gather*}
is Koszul if and only if $((\alpha:\beta),(\gamma:\delta))$ is equal to $((0:1),(0:1))$ or $((1:-1),(1:0))$. 
\end{proposition}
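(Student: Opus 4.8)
The plan is to prove the two implications separately, reading off the $S_n$-module structure---and hence the Poincar\'e series---of each operad $\calO_\rho$ from Theorem~\ref{th:rel12}. For the ``if'' direction I would identify the two operads outright: Theorem~\ref{th:rel12} gives $\calO_{((0:1),(0:1))}\cong\NAP^!$ and $\calO_{((1:-1),(1:0))}\cong\Perm$, and since $\NAP$ and $\Perm$ are both known to be Koszul and an operad is Koszul precisely when its Koszul dual is, both of these operads are Koszul.

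For the ``only if'' direction I would show that every other $\rho$ produces a non-Koszul operad, splitting into the two regimes dictated by Theorem~\ref{th:rel12}. When $\rho=((1:-1),(\gamma:\delta))$ with $\delta\neq0$ one has $\dim\calO_\rho(n)=1$ for $n\ge4$, so that
\[
f_{\calO_\rho}(t)=t+t^2+\tfrac12 t^3+\sum_{n\ge4}\frac{t^n}{n!};
\]
I would then apply the positivity test of Proposition~\ref{prop:positivity}, computing the compositional inverse of $f_{\calO_\rho}$ and producing a coefficient $a_n$ with $(-1)^{n-1}a_n<0$.

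In every remaining case Theorem~\ref{th:rel12} gives $\calO_\rho(n)=0$ for $n\ge4$, so $f_{\calO_\rho}(t)=t+t^2+\tfrac12 t^3$. For an operad that vanishes beyond arity three, Proposition~\ref{prop:dim2} together with the necessity of the Euler identity (recalled in Section~\ref{sec:recollections}) makes Koszulness equivalent to $-f_{\calO_\rho^!}(-f_{\calO_\rho}(t))=t$. Expanding this identity shows that it already forces $\dim\calO_\rho^!(4)=60$; I would therefore compute the Hilbert series of the Koszul dual $\calO_\rho^!$ from the quadratic-dual presentation using operadic Gr\"obner bases and verify that the identity fails---concretely, that $\dim\calO_\rho^!(4)$ does not take the value $60$ demanded by Koszulness---so that $\calO_\rho$ is not Koszul.

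The main obstacle is the first regime of the ``only if'' direction. Because the compositional inverse of $f_{\calO_\rho}$ there has its dominant singularities off the negative real axis, the inverse's low-degree coefficients carry the ``correct'' alternating signs and the violation of positivity surfaces only in higher degree; locating it requires explicit computer algebra rather than an a priori sign estimate. The vanishing regime is comparatively benign, since Proposition~\ref{prop:dim2} reduces it to a single finite dimension count---though that count itself rests on a Gr\"obner-basis computation of the Koszul dual.
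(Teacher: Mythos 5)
Your ``if'' direction and your treatment of the regime $\rho=((1:-1),(\gamma:\delta))$ with $\delta\ne0$ coincide with the paper's proof: the paper identifies the two Koszul operads as $\NAP^!$ and $\Perm$ via Theorem \ref{th:rel12}, and in that regime applies Proposition \ref{prop:positivity} to $t+t^2+\frac12t^3+\sum_{k\ge4}\frac{t^k}{k!}$, locating the sign violation at $t^{11}$ (coefficient $-\frac{802543633}{39916800}$). Your arithmetic for the vanishing regime is also right as far as it goes: the functional equation does force $\dim\calO_\rho^!(4)=60$, and since Proposition \ref{prop:dim2} supplies the converse when $\calO_\rho(n)=0$ for $n\ge4$, Koszulness is indeed equivalent to the functional equation there.

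However, your plan for the vanishing regime diverges from the paper's and has a genuine gap. The paper simply applies Proposition \ref{prop:positivity} again: the compositional inverse of $t+t^2+\frac12t^3$ has a positive coefficient $\frac{715}{16}$ at $t^{10}$, and this one computation disposes of \emph{all} remaining parameter values at once, because it uses only the Hilbert series of $\calO_\rho$, which Theorem \ref{th:rel12} shows is the same for every such $\rho$. Your route instead requires the unverified claim that $\dim\calO_\rho^!(4)\ne 60$. Nothing guarantees the discrepancy surfaces already at arity $4$: non-Koszulness only forces the actual Hilbert series of $\calO_\rho^!$ to deviate from the predicted inverse series at \emph{some} arity, and the paper's sign data only pins a forced deviation down by arity $10$. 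If $\dim\calO_\rho^!(4)=60$ at some parameter, your check is silent, and the natural stopping rule (``if all dimensions matched through arity $9$, the arity-$10$ prediction would be negative'') is precisely the positivity test you set out to avoid.

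The second, more serious, problem is that the ``remaining cases'' form a genuinely multi-parameter family: all $((1:1),(\gamma:\delta))$, all $((0:1),(\gamma:\delta))$ with $\gamma\ne0$, and all $((\alpha:\beta),(\gamma:\delta))$ with $\alpha(\alpha-\beta)(\alpha+\beta)\ne0$. While $f_{\calO_\rho}$ is constant across this family, $f_{\calO_\rho^!}$ need not be: the dual is cut out by the annihilator $R^\perp$, which varies with the parameters, and dimensions of the dual are only semicontinuous in them --- compare the care taken over exactly this point in Lemma \ref{lm:lowerbound} and the non-flatness phenomenon in the Remark following Proposition \ref{prop:sixth}. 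A single Gr\"obner basis computation at one parameter value therefore does not settle all cases; you would need either a parametric Gr\"obner basis with case analysis on degenerating leading coefficients, or a semicontinuity argument showing that the generic (hence minimal) value of $\dim\calO_\rho^!(4)$ already exceeds $60$. Your proposal supplies neither, whereas the paper's uniform positivity argument sidesteps both issues entirely.
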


\begin{proof}
This is the operad $\calO_{\rho}$ studied in Theorem \ref{th:rel12}, where it is established that for $((\alpha:\beta),(\gamma:\delta))=((0:1),(0:1))$ this operad is isomorphic to the operad $\NAP^!$, and for $((\alpha:\beta),(\gamma:\delta))=((1:-1),(1:0))$ this operad is isomorphic to the operad $\Perm$; both these operads are well known to be Koszul. For $((\alpha:\beta),(\gamma:\delta))=((1:-1),(\gamma:\delta))$ with $\delta\ne 0$, Theorem \ref{th:rel12} implies that the Poincaré series of this operad is
 \[
t+t^2+\frac12t^3+\sum_{k\ge 4}\frac{t^k}{k!}
 \] 
whose compositional inverse has a negative coefficient $-\frac{802543633}{39916800}$ at $t^{11}$. By Proposition \ref{prop:positivity}, this operad is not Koszul.
For all other cases, Theorem \ref{th:rel12} implies that we have $\calO_{\rho}(n)=0$ for $n\ge 4$, so the Poincaré series of this operad is $t+t^2+\frac12t^3$. Its compositional inverse has a positive coefficient $\frac{715}{16}$ at $t^{10}$. By Proposition \ref{prop:positivity}, this operad is not Koszul.
\end{proof}

\begin{proposition}[Multiplicities $(2,1)$]\label{prop:sixth}
The quotient of the Novikov operad by the ideal generated by 
\begin{gather*}
(a_1a_2)a_3+(a_2a_3)a_1+(a_3a_1)a_2,\qquad
a_1(a_2a_3)+a_2(a_3a_1)+a_3(a_1a_2),\\
\gamma((a_1,a_2,a_3)+(a_2,a_1,a_3)-2(a_3,a_1,a_2))+\delta(2a_1(a_2a_3)-a_3(a_1a_2)-a_3(a_2a_1))
\end{gather*}
is Koszul.
\end{proposition}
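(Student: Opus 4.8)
The plan is to apply Proposition~\ref{prop:dim2}, which reduces Koszulness of a quadratic operad vanishing in arities $\ge 4$ to a single identity of Hilbert series. Write $\calO_{\gamma,\delta}$ for the operad in the statement. First I would observe that $\calO_{\gamma,\delta}$ is a further quotient of the multiplicity-$(2,0)$ operad of Proposition~\ref{prop:third}: we have merely adjoined the two-dimensional relation to the ideal generated by both copies of the trivial module. Since that $(2,0)$ operad already vanishes in arities $\ge 4$, so does $\calO_{\gamma,\delta}$. In arity three, quotienting the component $V_3^2\oplus V_{2,1}^2$ of the Novikov operad by both trivial summands and by one copy of $V_{2,1}$ leaves $\calO_{\gamma,\delta}(3)\cong V_{2,1}$, so the Hilbert series is the cubic polynomial $f_{\calO_{\gamma,\delta}}(t)=t+t^2+\tfrac13 t^3$.

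Next I would identify the Koszul dual. Because the Novikov operad is isomorphic to its Koszul dual, the relations of $\calO_{\gamma,\delta}^!$ are the orthogonal complement, under the usual sign-twisted pairing, of the relations of $\calO_{\gamma,\delta}$ in the arity-three part of the free operad on one binary generator. A dimension count leaves a two-dimensional relation space of type $V_{2,1}$, exhibiting $\calO_{\gamma,\delta}^!$ as a one-relation operad whose defining relation is of the type appearing in the $(\gamma:\delta)$-family of Theorem~\ref{th:Koszul}, with $\dim\calO_{\gamma,\delta}^!(3)=10$. By Proposition~\ref{prop:dim2} it then suffices to check $-f_{\calO_{\gamma,\delta}^!}(-f_{\calO_{\gamma,\delta}}(t))=t$; equivalently, since $f_{\calO_{\gamma,\delta}}$ is the cubic above with compositional inverse $g(t)=t-t^2+\tfrac53 t^3-\tfrac{10}3 t^4+\cdots$, one must verify that $f_{\calO_{\gamma,\delta}^!}(t)=-g(-t)=t+t^2+\tfrac53 t^3+\tfrac{10}3 t^4+\cdots$.

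The main obstacle is that the dual is infinite-dimensional---already $\dim\calO_{\gamma,\delta}^!(4)=80$---so the functional equation cannot be settled by a finite dimension count. I would resolve this by passing to the shuffle operad underlying $\calO_{\gamma,\delta}^!$ and producing a quadratic Gröbner basis for it, uniformly in $(\gamma:\delta)$. This at once proves that $\calO_{\gamma,\delta}^!$ is Koszul, whence its Hilbert series is forced to equal $-g(-t)$ and the identity of Proposition~\ref{prop:dim2} holds, and it gives Koszulness of $\calO_{\gamma,\delta}=(\calO_{\gamma,\delta}^!)^!$ directly. The delicate point is genericity in the parameter: a monomial order giving a quadratic rewriting system for generic $(\gamma:\delta)$ will have leading coefficients vanishing at finitely many special values, most notably the degenerate points $(\gamma:\delta)=(1:0)$ and $(0:1)$, and these must be covered either by a different choice of leading terms or by the explicit dimension computations underlying Corollary~\ref{cor:dim}.
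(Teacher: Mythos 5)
Your setup is correct and matches the paper's: $\calS_{\gamma,\delta}$ (your $\calO_{\gamma,\delta}$) vanishes in arities $\ge 4$, its arity-three component is $V_{2,1}$, its Hilbert series is $t+t^2+\tfrac13 t^3$, and the dual dimensions $10$ and $80$ in arities $3$ and $4$ are the values forced by the inverse-series identity, so the reduction to Proposition~\ref{prop:dim2} is sound (a minor conceptual slip: the self-duality of the Novikov operad is irrelevant to forming the Koszul dual of a quadratic quotient; that duality only matters for restating the classification in Theorem~\ref{th:Koszul}). The genuine gap is that the core of your argument --- ``producing a quadratic Gr\"obner basis for $\calS_{\gamma,\delta}^!$, uniformly in $(\gamma:\delta)$'' --- is left entirely unconstructed, and the paper's own proof documents that this step is unavailable: in Lemma~\ref{lm:upperbound} the authors state that for the natural candidate normal forms (binary shuffle tree monomials in the polarized generators avoiding the divisors $[a_1,a_2]\cdot a_3$ and $[a_1,a_3]\cdot a_2$, whose count is exactly what the Hilbert series requires) there exists \emph{no} quadratic Gr\"obner basis when $\gamma\ne\delta$. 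What the paper does instead is sandwich the Poincar\'e series of $\calS_{\gamma,\delta}^!$ between two bounds: the lower bound (Lemma~\ref{lm:lowerbound}) comes from a flat-family semicontinuity argument, viewing the bar complex as a complex of free $\k[s]$-modules with $s=\frac{\gamma+\delta}{\gamma-\delta}$ and anchoring at the special fiber $s=0$ (that is, $\gamma=-\delta$), where a quadratic Gr\"obner basis \emph{does} exist; the upper bound (Lemma~\ref{lm:upperbound}) comes from a terminating rewriting procedure that is explicitly not an operadic Gr\"obner basis --- it inducts on the number of bracket labels and on the arity of the left subtree at the root, and ``does not have a direct meaning in terms of operads.'' Without either the deformation argument or such an ad hoc spanning argument, your functional-equation check cannot be completed, since, as you yourself note, the dual is infinite-dimensional.

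Your contingency plan for special parameter values also contains two concrete errors. First, the degenerate point of the polarized relation $(\gamma+\delta)\,a_1\cdot[a_2,a_3]+(\delta-\gamma)\,[a_1,[a_2,a_3]]$ is $\gamma=\delta$ (where $s$ becomes infinite), and the paper treats that case separately by exhibiting a quadratic Gr\"obner basis for $\calS_{\gamma,\gamma}$ itself rather than for its dual; neither $(\gamma:\delta)=(1:0)$ nor $(0:1)$ plays a distinguished role in this family. Second, Corollary~\ref{cor:dim} and the Gr\"obner basis computations behind it concern the operads $\calQ_{1,-1}$, $\calQ_{1,0}$, $\calQ_{0,1}$ of the multiplicities-$(0,1)$ family (the quotients by a copy of the two-dimensional module only), not $\calS_{\gamma,\delta}$ or its Koszul dual, so they cannot be invoked to cover special values of the multiplicities-$(2,1)$ family.
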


\begin{proof}
Let us denote this operad by $\calS_{\gamma,\delta}$. If we take the quotient by both copies of the trivial representation, we can first take the quotient by the one of its two copies that corresponds to $(\alpha:\beta)=(1:1)$, which is one-dimensional in each arity starting from $4$, and then by the remaining copy. It follows that our operad vanishes from the arity $4$ onwards, and its Poincaré series is $t+t^2+\frac{1}{3}t^3$. It has the same Poincaré series as that of operads considered in \cite[Prop.~3.13]{MR4576938}, and our argument will be very similar to the one of that statement. In the polarized presentation, the generators of the ideal of relations of our operad are
\begin{gather*}
[[a_1,a_2],a_3]+[[a_2,a_3],a_1]+[[a_3,a_1],a_2],\quad
[a_1,a_2]\cdot a_3+[a_2,a_3]\cdot a_1+[a_3,a_1]\cdot a_2,\\
[a_1\cdot a_2,a_3]+[a_2\cdot a_3,a_1]+[a_3\cdot a_1,a_2],\quad
(a_1\cdot a_2)\cdot a_3+(a_2\cdot a_3)\cdot a_1+(a_3\cdot a_1)\cdot a_2,\\
2[a_1,a_2]\cdot a_3-[a_1\cdot a_3,a_2]-[[a_1,a_2],a_3]-[a_1,a_2\cdot a_3], \quad
(a_1\cdot a_2)\cdot a_3-a_1\cdot (a_2\cdot a_3)-[a_1,a_3]\cdot a_2,\\
(\gamma+\delta)a_1\cdot [a_2,a_3]+(\delta-\gamma)[a_1,[a_2,a_3]]
\end{gather*}

Suppose first that $\gamma=\delta$. In this case, our polarized presentation may be simplified to
\begin{gather*}
[[a_1,a_2],a_3]+[[a_2,a_3],a_1]+[[a_3,a_1],a_2],\quad
[a_1\cdot a_2,a_3]+[a_2\cdot a_3,a_1]+[a_3\cdot a_1,a_2],\\
-[a_1\cdot a_3,a_2]-[[a_1,a_2],a_3]-[a_1,a_2\cdot a_3], \quad
(a_1\cdot a_2)\cdot a_3,\quad
a_1\cdot [a_2,a_3]
\end{gather*}
If we consider the ordering of monomials that first compares the number of generators  $[-,-]$ used, and if these numbers coincide, compares monomials using the reverse path lexicographic order for the ordering of generators such that $[-,-]$ is greater than $-\cdot-$, one sees that our operad has a quadratic Gr\"obner basis (for instance, it follows from the fact that $[a_1\cdot a_2, a_3]$ and $[a_1\cdot a_3, a_2]$ are the only two normal monomials, and hence the quadratic part of the Gr\"obner basis gives the correct dimensions in all arities) and hence is Koszul. 

Suppose now that $\gamma\ne\delta$. 

\begin{lemma}\label{lm:lowerbound}
For $\gamma\ne\delta$, the compositional inverse of $t-t^2+\frac13t^3$ is, coefficient-wise, a lower bound for the Poincaré series of the operad $\calS_{\gamma,\delta}^!$. 
\end{lemma}

\begin{proof}Recall that the operad $\calS_{\gamma,\delta}^!$ is, up to homological shifts and linear duality, the diagonal part of the bar complex of the operad $\calS_{\gamma,\delta}$, so for the purposes of estimating the dimensions of components, we may focus on the latter chain complex. Let us fix an arity $n\ge 1$, and denote $s=\frac{\gamma+\delta}{\gamma-\delta}$. The polarized presentation of our operad is 
\begin{gather*}
[[a_1,a_2],a_3]+[[a_2,a_3],a_1]+[[a_3,a_1],a_2],\quad
[a_1,a_2]\cdot a_3+[a_2,a_3]\cdot a_1+[a_3,a_1]\cdot a_2,\\
[a_1\cdot a_2,a_3]+[a_2\cdot a_3,a_1]+[a_3\cdot a_1,a_2],\quad
(a_1\cdot a_2)\cdot a_3+(a_2\cdot a_3)\cdot a_1+(a_3\cdot a_1)\cdot a_2,\\
2[a_1,a_2]\cdot a_3-[a_1\cdot a_3,a_2]-[[a_1,a_2],a_3]-[a_1,a_2\cdot a_3], \quad
(a_1\cdot a_2)\cdot a_3-a_1\cdot (a_2\cdot a_3)-[a_1,a_3]\cdot a_2,\\
[a_1,[a_2,a_3]]-s a_1\cdot [a_2,a_3].
\end{gather*}
We shall now consider these relations as defining relations of an operad over the ring $\k[s]$. By a direct inspection, we can see that the component of arity three of this operad is a free $\k[s]$-module of rank two with a basis given by the cosets of $[a_1,a_2]\cdot a_3$ and $[a_1,a_3]\cdot a_2$. It follows that the arity $n$ component of the bar complex of our operad is a chain complex of free $\k[s]$-modules of finite rank, hence the semicontinuity theorem \cite[Sec.~III.12]{MR0463157} applies, and for each integer $k\ge 0$, the $k$-th homology of this chain complex is constant for generic $s$, and may jump up for certain special values of $s$.

Let us show that for $\gamma=-\delta$, that is for $s=0$, the Poincaré series of the operad $\calS_{\gamma,\delta}^!$ is equal to the compositional inverse of $t-t^2+\frac13t^3$. In this case, our polarized presentation may be simplified to
\begin{gather*}
[a_1,a_2]\cdot a_3+[a_2,a_3]\cdot a_1+[a_3,a_1]\cdot a_2,\quad
[a_1\cdot a_2,a_3]+[a_2\cdot a_3,a_1]+[a_3\cdot a_1,a_2],\\
(a_1\cdot a_2)\cdot a_3+(a_2\cdot a_3)\cdot a_1+(a_3\cdot a_1)\cdot a_2,\quad
2[a_1,a_2]\cdot a_3-[a_1\cdot a_3,a_2]-[a_1,a_2\cdot a_3], \\
(a_1\cdot a_2)\cdot a_3-a_1\cdot (a_2\cdot a_3)-[a_1,a_3]\cdot a_2,\quad
[a_1,[a_2,a_3]].
\end{gather*}
If we consider the ordering of monomials that first compares the number of generators $-\cdot-$ used, and if these numbers coincide, compares monomials using the reverse path lexicographic order for the ordering of generators such that $[-,-]$ is greater than $-\cdot-$, one sees that our operad has a quadratic Gr\"obner basis (for instance, it follows from the fact that $[a_1,a_2]\cdot a_3$ and $[a_1,a_3]\cdot a_2$ are the only two normal monomials, and hence the quadratic part of the Gr\"obner basis gives the correct dimensions in all arities) and hence is Koszul, and hence its compositional inverse is the Poincaré series of the operad $\calS_{\gamma,\delta}$, that is $t-t^2+\frac13t^3$.

Our discussion allows us to conclude that
\begin{itemize}
\item for generic values of $(\gamma:\delta)$, the homology of first $n$ arities of the bar complex of the operad $\calS_{\gamma,\delta}$ is concentrated on the diagonal (since the off-diagonal homology groups vanish for one specialisation $s=0$, corresponding to $\gamma=-\delta$, and since homology is semicontinuous),
\item for generic values of $(\gamma:\delta)$, the first $n$ coefficients of the Poincar\'e series of the operad $\calS_{\gamma,\delta}^!$ are  equal to the first $n$ coefficients of the compositional inverse of $t-t^2+\frac13t^3$ (since the Poincar\'e series of the bar complex of an operad  is always equal to the compositional inverse of the Poincar\'e series of that operad, and since we already know that for generic values, the homology of the first $n$ arities of the bar complex of the operad $\calS_{\gamma,\delta}$ is concentrated on the diagonal),
\item for each value of $(\gamma:\delta)$, the dimension of the $n$-th component of the operad $\calS_{\gamma,\delta}^!$ is greater than or equal to the $n$-th coefficient of the compositional inverse of $t-t^2+\frac13t^3$ (since homology is semicontinuous),
\end{itemize}
so the compositional inverse of $t-t^2+\frac13t^3$ is a lower bound for the Poincaré series of the operad $\calS_{\gamma,\delta}^!$. 
\end{proof}

\begin{lemma}\label{lm:upperbound}
For $\gamma\ne\delta$, the compositional inverse of $t-t^2+\frac13t^3$ is, coefficient-wise, an upper bound for the Poincaré series of the operad $\calS_{\gamma,\delta}^!$. 
\end{lemma}

\begin{proof}
We shall show that the shuffle tree monomials whose underlying planar trees are binary, whose vertices are labelled by the polarized operations $(-\cdot-)$ and $[-,-]$, and whose quadratic divisors do not include $[a_1,a_2]\cdot a_3$ and $[a_1,a_3]\cdot a_2$ span the Koszul dual operad. Unfortunately, one can show that there exists no quadratic Gr\"obner basis with these monomials as normal forms, so we shall use some sort of rewriting that terminates but does not have a direct meaning in terms of operads. Overall, we shall argue by induction on arity. The basis of induction is clear: since the component $\calS_{\gamma,\delta}(3)$ has a basis consisting of these monomials, the Koszul dual operad has relations allowing to express these monomials as linear combinations of others. For fixed arity, we shall take into account the label of the root vertex. Let $T$ be any shuffle tree monomial of arity $n$. If the root of $T$ is labelled by $[-,-]$, then, once we use the induction hypothesis and represent the two trees grafted at the root of $T$ as linear combinations of requested shuffle tree monomials, this immediately gives a representation of $T$ with the requisite property. Suppose that the root of $T$ is labelled by $(-\cdot-)$, so that it may have a left quadratic divisor at the root that is prohibited. Since $\alpha\ne\beta$, the two prohibited quadratic divisors appear (individually) in the two defining relations of our operad, and we may replace the arising divisor by a linear combination of allowed quadratic monomials. In the result, we may forget the monomials where the root is labelled by $[-,-]$, since we already proved our statement for such monomials. What are the other monomials that may appear? Among them there are monomials which have fewer occurrences of $[-,-]$, which we may make as another induction parameter, and monomials which have the same number of occurrences of $[-,-]$, but the arity of the left subtree of the root is smaller, which we may make another induction parameter. This means that it is possible to write $T$ as a linear combination of requested shuffle tree monomials. We already know that these monomials form a basis in the Koszul dual operad for $\alpha=\beta$, so the necessary upper bound is established.
\end{proof} 

Combining the two bounds that we found, we conclude that the Poincar\'e series of the operad $\calS_{\gamma,\delta}^!$ is the compositional inverse of $t-t^2+\frac13t^3$, so according to Proposition \ref{prop:dim2}, our operad is Koszul.
\end{proof}

\begin{remark}
The similarity of this result to \cite[Prop.~3.13]{MR4576938} that we mentioned raises a natural question: suppose that an operad $\calO$ is quotient of the magmatic operad by quadratic relations for which we have an isomorphism $\calO(3)\cong V_{2,1}$ of $S_3$-modules. Is it true that $\calO$ is Koszul? Since $V_{2,1}$ appears in the arity three component of the magmatic operad with multiplicity $4$, such quotients are parametrized by points of $\mathbb{P}^3$. The corresponding family is no longer flat: the quotient by relations (in the polarized form)
\begin{gather*}
a_1\cdot (a_2\cdot a_3)=(a_1\cdot a_2)\cdot a_3,\\
a_1\cdot[a_2,a_3]=[a_1,a_2\cdot a_3]=[a_1,[a_2,a_3]]=0
\end{gather*}
is the connected sum of the anticommutative nilpotent operad and the operad $\Com$, so it has a different Poincaré series. This breaks our proof of Koszulness, and indeed one can see that the corresponding family contains non-Koszul operads: the quotient by relations (in the polarized form)
\begin{gather*}
[a_1,[a_2,a_3]]+[a_2,[a_3,a_1]],\\
a_1\cdot[a_2,a_3]=[a_1,a_2\cdot a_3]=a_1\cdot (a_2\cdot a_3)=0
\end{gather*}
is the connected sum of the commutative nilpotent operad and the operad encoding anti-commutative associative algebras that is not Koszul \cite{MR1358617}.
\end{remark}

\begin{proposition}[Multiplicities $(0,2)$]\label{prop:seventh}
The quotient of the Novikov operad by the ideal generated by 
\begin{gather*}
(a_1,a_2,a_3)+(a_2,a_1,a_3)-2(a_3,a_1,a_2),\\
2a_1(a_2a_3)-a_3(a_1a_2)-a_3(a_2a_1)
\end{gather*}
is not Koszul.
\end{proposition}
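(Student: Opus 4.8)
The plan is to determine the Poincaré series of this operad explicitly and then invoke the positivity test (Proposition~\ref{prop:positivity}), exactly as in the other non-Koszul cases treated above. Since the ideal is generated by \emph{both} copies of the two-dimensional module $V_{2,1}$ present in the arity three component of the Novikov operad, the strategy is to identify this quotient with a multiple specialization of the operad $\calQ_{\gamma,\delta}$ studied in Theorem~\ref{th:rel2}. Concretely, I would first observe that quotienting out one copy of $V_{2,1}$ yields $\calQ_{\gamma,\delta}$ for some fixed $(\gamma:\delta)$, and then quotienting out the second copy imposes the remaining independent relation. Because Theorem~\ref{th:rel2} shows $\calQ_{\gamma,\delta}(n)\cong V_n^2\oplus V_{n-1,1}$ for \emph{every} $(\gamma:\delta)\in\mathbb{P}^1$, with the $V_{n-1,1}$ constituent and one copy of $V_n$ being ``cut out'' in a controlled way, the second relation should collapse the component down substantially.

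The key computational step is to pin down $\calO(n)$ as an $S_n$-module. I expect that, after quotienting by both copies of $V_{2,1}$, the surviving component stabilizes to a low-dimensional module—most plausibly a single copy of $V_n$ (the commutative piece, since the operad $\Com$ remains a quotient) together with possibly finitely many exceptional contributions in small arities. In fact, the most efficient route mirrors the proof of Proposition~\ref{prop:fourth}: since the two copies of $V_{2,1}$ are exactly the relations defining two different members of the family $\calQ_{\gamma,\delta}$, their common quotient retains only the submodules that survive in \emph{both}, and Theorem~\ref{th:rel2} identifies these generators (the linearizations of $(a')^{n-1}a$, $a''(a')^{n-3}a^2$, $a^{(n-1)}a^{n-1}$, etc.) concretely enough to read off the intersection. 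Once $\dim\calO(n)$ is known as a function of $n$, I would assemble the Poincaré series $f(t)=\sum_{n\ge1}\frac{\dim\calO(n)}{n!}t^n$ in closed form.

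The final step is to compute the compositional inverse of $f(t)$ to a sufficiently high order and exhibit a single coefficient $a_n$ violating the sign condition $(-1)^{n-1}a_n\ge 0$. By analogy with Propositions~\ref{prop:third} and~\ref{prop:fourth}, where the relevant positivity failure appears at $t^6$ and $t^8$ respectively, I anticipate the obstruction here to surface at a comparably modest degree, and a direct power-series inversion—carried out by machine—will produce the offending coefficient. The main obstacle is the bookkeeping in the first half: correctly identifying which irreducible constituents of $\calQ_{\gamma,\delta}(n)$ are annihilated by imposing the \emph{second} two-dimensional relation, since the two copies of $V_{2,1}$ interact and one must be careful that the generators listed in Theorem~\ref{th:rel2} for different values of $(\gamma:\delta)$ are compared in a common basis of Novikov differential monomials. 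Once the dimension formula is secured, the positivity test delivers non-Koszulness immediately.
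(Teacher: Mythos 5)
Your first half is fine: using the relations from the proof of Theorem~\ref{th:rel2} one does see that the common quotient by both copies of $V_{2,1}$ has $\calO(n)\cong V_n$, i.e.\ one-dimensional components, for all $n\ge 4$, so the Poincar\'e series is $t+t^2+\tfrac13 t^3+\sum_{k\ge 4}\tfrac{t^k}{k!}$, exactly as the paper establishes. The genuine gap is in your final step. You propose to exhibit a coefficient of the compositional inverse of this series violating $(-1)^{n-1}a_n\ge 0$, anticipating it ``at a comparably modest degree'' by analogy with Propositions~\ref{prop:third} and~\ref{prop:fourth}. That analogy fails here: the paper checks that the first $1000$ coefficients of the compositional inverse all have the good signs, so the plain positivity test of Proposition~\ref{prop:positivity} detects nothing at any computationally feasible order (and quite possibly never does). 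This is the one case in the classification where the naive Ginzburg--Kapranov test is inconclusive, which is presumably why it merited a separate argument.

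The missing idea is a \emph{weighted} refinement of the positivity test, following \cite[Prop.~3.6]{MR4576938}. One passes to the polarized presentation of the operad, whose relations are $[a_1,a_2]\cdot a_3=[[a_1,a_2],a_3]=0$, $(a_1\cdot a_2)\cdot a_3=a_1\cdot(a_2\cdot a_3)$, and $[a_1\cdot a_2,a_3]+[a_1,a_2\cdot a_3]=0$; these are homogeneous for the weight grading $w(-\cdot-)=0$, $w([-,-])=1$, so the operad inherits an extra grading and its weighted Poincar\'e series is
\begin{equation*}
t+\frac{1+u}{2}t^2+\frac{1+u}{6}t^3+\sum_{k\ge 4}\frac{t^k}{k!}.
\end{equation*}
Koszulness would force the sign conditions to hold coefficient-wise in $u$ for the compositional inverse in $t$ of this two-variable series, and there the obstruction does appear: the coefficient of $t^{20}$ contains the positive term $\frac{14119421138089}{17322439680000}u^2$. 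Without this refinement (or some other genuinely different argument), your plan cannot be completed, since the one-variable series you set out to invert simply does not betray the failure of Koszulness.
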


\begin{proof}
Using the relations obtained in the proof of Theorem \ref{th:rel2}, it is immediate that all components of our operad starting from arity $4$ are one-dimensional. The first 1000 coefficients of its compositional inverse have ``good'' signs, making one suspect that this operad may be Koszul. We shall, however, show that it is not Koszul, by an argument analogous to that of \cite[Prop.~3.6]{MR4576938}. Namely, we consider the polarized presentation of our operad, for which the relations are 
\begin{gather*}
[a_1,a_2]\cdot a_3=[[a_1, a_2], a_3]=0,\\
(a_1\cdot a_2)\cdot a_3=a_1\cdot (a_2\cdot a_3),\\
[a_1\cdot a_2, a_3]+[a_1,a_2\cdot a_3]=0.
\end{gather*}  
For the weight grading $w(-\cdot-)=0$, $w([-,-])=1$, the relations are homogeneous, and so our operad inherits a weight grading. Clearly, the weighted Poincar\'e series of this operad is  
 \[
t+\frac{(1+u)}2t^2+\frac{1+u}{6}t^3+\sum_{k\ge 4}\frac{t^k}{k!}.
 \]
For the compositional inverse of this power series, the coefficient at~$t^{20}$ has a positive coefficient $\frac{14119421138089}{17322439680000}$ at $u^2$, so our operad is not Koszul.
\end{proof}

\begin{proposition}[Multiplicities $(1,2)$]\label{prop:eighth}
The quotient of the Novikov operad by the ideal generated by 
\begin{gather*}
\alpha((a_1a_2)a_3+(a_2a_1)a_3+(a_3a_1)a_2)+\beta(a_1(a_2a_3)+a_2(a_3a_1)+a_3(a_1a_2)),\\
(a_1,a_2,a_3)+(a_2,a_1,a_3)-2(a_3,a_1,a_2),\\
2a_1(a_2a_3)-a_3(a_1a_2)-a_3(a_2a_1)
\end{gather*}
is Koszul. 
\end{proposition}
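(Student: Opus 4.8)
Write $\calT_{\alpha,\beta}$ for this operad. I first determine its components using the ``both two-dimensional'' quotient of Proposition~\ref{prop:seventh}, whose higher components are one-dimensional and spanned by the iterated symmetric product. By the weight grading there ($w(-\cdot-)=0$, $w([-,-])=1$) this product has weight zero; in polarized form $a''a^2\propto[a\cdot a,a]$ has weight one and hence maps to zero, whereas $(a')^2a$ has a nonzero weight-zero component and maps to the symmetric product nontrivially. Thus $\alpha a''a^2+(\alpha+\beta)(a')^2a$ generates the symmetric product precisely when $\alpha+\beta\ne0$, so that for $(\alpha:\beta)\ne(1:-1)$ we get $\calT_{\alpha,\beta}(n)=0$ for $n\ge4$, with Poincaré series $t+t^2+\tfrac16t^3$, while for $(\alpha:\beta)=(1:-1)$ the trivial relation is the associator $(a,a,a)=0$; factoring through $\calO_{((1:-1),(1:0))}\cong\Perm$ and imposing the remaining two-dimensional relation identifies $\calT_{1,-1}$ with $\Com$, which is Koszul. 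This settles the case $(\alpha:\beta)=(1:-1)$.

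For $(\alpha:\beta)\ne(1:-1)$, since $\calT_{\alpha,\beta}(n)=0$ for $n\ge4$, Proposition~\ref{prop:dim2} reduces Koszulness to showing that the Poincaré series of $\calT_{\alpha,\beta}^!$ is the compositional inverse (in the sign convention of that proposition) of $t+t^2+\tfrac16t^3$; I would establish this by bounding that series from both sides, following Lemmas~\ref{lm:lowerbound} and~\ref{lm:upperbound}. In the polarized presentation the two-dimensional relations take the form $[a_1,a_2]\cdot a_3=[[a_1,a_2],a_3]=0$, associativity of $(-\cdot-)$, and $[a_1\cdot a_2,a_3]+[a_1,a_2\cdot a_3]=0$, and the trivial relation becomes
\[
(\alpha-\beta)\bigl([a_1\cdot a_2,a_3]+[a_2\cdot a_3,a_1]+[a_3\cdot a_1,a_2]\bigr)+3(\alpha+\beta)\,a_1\cdot a_2\cdot a_3=0 .
\]
For the upper bound one exhibits, exactly as in Lemma~\ref{lm:upperbound}, a spanning family of binary shuffle tree monomials avoiding the two prohibited quadratic divisors read off from the leading terms of this presentation, checking by induction on arity and on the number of $[-,-]$-labelled vertices that every shuffle monomial rewrites into that family.

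For the lower bound I would regard the relations as defining an operad over $\k[s]$ with $s=\beta/\alpha$ and specialize to a convenient seed, for instance $(\alpha:\beta)=(1:1)$, where the trivial relation reads $a_1\cdot a_2\cdot a_3=0$ and the resulting polarized presentation admits a quadratic Gr\"obner basis (checked as in Proposition~\ref{prop:sixth}, e.g.\ with \cite{albert,OpGb}); then $\calT_{1,1}$ is Koszul and the off-diagonal homology of its bar complex vanishes. By the semicontinuity theorem~\cite[Sec.~III.12]{MR0463157} this off-diagonal homology vanishes for generic $s$, so the generic Poincaré series of $\calT_{\alpha,\beta}^!$ is the compositional inverse of $t+t^2+\tfrac16t^3$, and a further application of semicontinuity makes this a coefficientwise lower bound for all $s$. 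The two bounds coincide, so $f_{\calT_{\alpha,\beta}^!}$ is this inverse and Proposition~\ref{prop:dim2} yields Koszulness for every $(\alpha:\beta)\ne(1:-1)$. The main obstacle is that the family is not flat across $\mathbb{P}^1$: the Hilbert series jumps at $(1:-1)$, where the operad is the infinite-dimensional $\Com$ (cf.\ the Remark after Proposition~\ref{prop:sixth}); the semicontinuity argument must therefore be carried out over the open locus $\alpha+\beta\ne0$, on which the Hilbert series is constant, with the point $(1:-1)$ handled separately as above.
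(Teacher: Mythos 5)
Your treatment of the case $(\alpha:\beta)=(1:-1)$ contains a genuine error: the identification $\calT_{1,-1}\cong\Com$ is false. The ideal you quotient by is generated in arity three, and an operad ideal generated in arity three cannot touch the arity-two component, so $\calT_{1,-1}(2)\cong V_2\oplus V_{1,1}$ is two-dimensional, while $\Com(2)\cong V_2$ is one-dimensional; in particular the bracket $[a_1,a_2]$ survives in the quotient, and the Hilbert series is $t+t^2+\sum_{n\ge 3}t^n/n!$, not that of $\Com$. The same error recurs in your closing remark that ``the operad is the infinite-dimensional $\Com$'' at $(1:-1)$. What one actually gets at $\alpha+\beta=0$ is the following: modulo the multiplicity-$(0,2)$ relations $[a_1,a_2]\cdot a_3=[[a_1,a_2],a_3]=0$, associativity of $(-\cdot-)$, and $[a_1\cdot a_2,a_3]+[a_1,a_2\cdot a_3]=0$, the trivial-module relation reduces to $[a_1,a_2\cdot a_3]=0$ (hence also $[a_1\cdot a_2,a_3]=0$), so all mixed compositions vanish and the operad is the connected sum of $\Com$ with the anticommutative two-step nilpotent operad. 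This operad \emph{is} Koszul --- the bar complex of a connected sum is the coproduct of bar complexes --- so your conclusion for this point of $\mathbb{P}^1$ is true, but your proof of it is not, and the factorization through $\Perm$ does not rescue it: the quotient of $\Perm$ by the remaining copy of $V_{2,1}$ still has a two-dimensional arity-two component.

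For $\alpha+\beta\ne 0$ your component computation is correct and the semicontinuity scaffolding could in principle be made to work, but as written it is both incomplete and unnecessarily heavy. It is incomplete because the two load-bearing steps are only asserted: the quadratic Gr\"obner basis at the seed $(1:1)$ is delegated to software, and the upper bound ``exactly as in Lemma~\ref{lm:upperbound}'' does not transfer verbatim, since there $\calS_{\gamma,\delta}(3)$ was two-dimensional with a specific two-element monomial basis driving the rewriting induction, whereas here $\calT_{\alpha,\beta}(3)$ is one-dimensional, so the set of prohibited divisors and the induction itself must be redone. It is unnecessarily heavy because, modulo the multiplicity-$(0,2)$ relations, the extra relation is $(\alpha+\beta)\,a_1\cdot(a_2\cdot a_3)+(\beta-\alpha)[a_1,a_2\cdot a_3]=0$, i.e.\ $a_1\cdot(a_2\cdot a_3)=t\,[a_1,a_2\cdot a_3]$ for a scalar $t$, and for the order that first counts occurrences of $(-\cdot-)$ and then compares path-lexicographically, this presentation has a quadratic Gr\"obner basis uniformly in $t$. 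That single observation gives Koszulness for all $(\alpha:\beta)$ with $\alpha+\beta\ne 0$ at once, with no flatness locus, no semicontinuity, and no appeal to Proposition~\ref{prop:dim2}; this is the route the paper takes, and you should replace your two-bound argument by it.
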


\begin{proof}
From Proposition \ref{prop:seventh}, we know that the last two relations define an operad whose polarized presentation is  
\begin{gather*}
[a_1,a_2]\cdot a_3=[[a_1, a_2], a_3]=0,\\
(a_1\cdot a_2)\cdot a_3=a_1\cdot (a_2\cdot a_3),\\
[a_1\cdot a_2, a_3]+[a_1,a_2\cdot a_3]=0.
\end{gather*}  
Since 
 \[
\alpha((a_1a_2)a_3+(a_2a_3)a_1+(a_3a_1)a_2)+\beta(a_1(a_2a_3)+a_2(a_3a_1)+a_3(a_1a_2))
 \]
is a linearization of $\alpha a^2a+\beta aa^2=\frac12(\alpha-\beta) a^2\cdot a + \frac12(\alpha+\beta)[a^2, a]$, it is equivalent, modulo the first two relations, to 
 \[
(\alpha+\beta) a_1\cdot (a_2\cdot a_3) + (\beta-\alpha) [a_1,a_2\cdot a_3]
 \]
If $\alpha+\beta=0$, the relations mean that the operation $(-\cdot-)$ is associative, the operation $[-,-]$ is two-step nilpotent, and all compositions of these operations with one another vanish. This means that we are dealing with the connected sum of the operad of commutative associative algebras and the operad of anticommutative two-step nilpotent algebras. These two operads are well known to be Koszul, and so their connected sum is Koszul too (it follows from the fact that the bar complex of the connected sum is the coproduct of bar complexes).
If $\alpha+\beta\ne 0$, our new relation is of the form $a_1\cdot (a_2\cdot a_3) = t [a_1,a_2\cdot a_3]$ for some $t$. If we consider the ordering of monomials that first compares the number of generators $-\cdot-$ used, and if these numbers coincide, compares monomials using the path lexicographic order (for either order of generators), one sees that our operad has a quadratic Gr\"obner basis and hence is Koszul. 
\end{proof}

\begin{proposition}[Multiplicities $(2,2)$]\label{prop:ninth}
The quotient of the Novikov operad by its arity three component is Koszul. 
\end{proposition}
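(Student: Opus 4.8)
The plan is to apply Proposition~\ref{prop:dim2}, exactly as in the neighbouring propositions. First I would pin down the operad concretely. Write $E=V_2\oplus V_{1,1}$ for the arity-two component of the Novikov operad, viewed as its two-dimensional space of binary generators, so that the Novikov operad is a quadratic quotient $\calF(E)/(R_{\mathrm{Nov}})$ of the free operad $\calF(E)$. Since the Novikov operad is generated in arity two, the operadic ideal generated by the whole arity-three component already contains everything in arity $\ge 3$; hence the operad $\calT$ under consideration has $\calT(1)\cong V_1$, $\calT(2)\cong V_2\oplus V_{1,1}$, and $\calT(n)=0$ for $n\ge 3$. Equivalently, $\calT=\calF(E)/(\calF(E)(3))$ is the quadratic operad with generating module $E$ and \emph{maximal} space of quadratic relations. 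In particular $\calT(n)=0$ for $n\ge 4$, so the first hypothesis of Proposition~\ref{prop:dim2} holds, and it remains only to verify the generating-function identity $-f_{\calT^!}(-f_{\calT}(t))=t$.

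The key step is to identify the Koszul dual. Because the relation space $\calF(E)(3)$ is the entire arity-three component, its orthogonal complement under the canonical pairing between $\calF(E)(3)$ and $\calF(E^{\vee})(3)$ is zero, so $\calT^!=\calF(E^{\vee})$ is the \emph{free} operad on the dual binary generator $E^{\vee}$; as $\dim E^{\vee}(2)=\dim E(2)=2$, this is precisely the magmatic operad $\mathrm{Mag}$ appearing in Theorem~\ref{th:Koszul}. Its Poincar\'e series is the Catalan series
 \[
 f_{\calT^!}(t)=\frac{1-\sqrt{1-4t}}{2}=\sum_{n\ge 1}\tfrac{1}{n}\binom{2n-2}{n-1}t^{n},
 \]
which is characterised by the functional equation $f_{\calT^!}=t+f_{\calT^!}^{\,2}$. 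On the other side $f_{\calT}(t)=t+t^2$, since $\dim\calT(2)/2!=1$ and all higher components vanish.

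Finally I would check the inverse relation directly. Setting $\psi(t)=-f_{\calT^!}(-t)$, the functional equation $f_{\calT^!}=t+f_{\calT^!}^{\,2}$ transforms into $\psi+\psi^2=t$, so $\psi$ is the compositional inverse of $f_{\calT}(t)=t+t^2$; hence $-f_{\calT^!}(-f_{\calT}(t))=\psi(f_{\calT}(t))=t$, and Proposition~\ref{prop:dim2} yields that $\calT$ is Koszul. I expect essentially no obstacle in this case: it is the degenerate corner of the nine-case analysis, and the only point deserving care is recognising that quotienting by the \emph{entire} arity-three component makes the relation space maximal, whence the dual is free and therefore automatically Koszul. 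Indeed, one could bypass Proposition~\ref{prop:dim2} altogether, since free operads are Koszul and Koszulness is preserved under Koszul duality, so that $\calT=(\mathrm{Mag})^{!}$ is Koszul; I would nonetheless present the Poincar\'e-series computation to keep the argument uniform with the preceding propositions.
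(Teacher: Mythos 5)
Your proof is correct and takes essentially the same route as the paper, which simply identifies this quotient as the operad of nilpotent algebras of index three and cites its Koszulness as well known -- precisely the fact you verify. Your details all check out: the dual is indeed the magmatic operad (the relation space being all of $\calF(E)(3)$, its annihilator vanishes), $f_{\calT}(t)=t+t^2$ and the Catalan series are correct, the inversion identity holds, so Proposition~\ref{prop:dim2} applies; your shortcut via Koszulness of free operads and invariance under Koszul duality is equally valid.
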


\begin{proof}
This is the operad of nilpotent algebras of index three which is well known to be Koszul. 
\end{proof}

The results of Propositions \ref{prop:first}--\ref{prop:ninth} imply the following theorem. 

\begin{theorem}\label{th:Koszul}
The following Koszul operads with one binary generator admit the (right) Novikov operad as a quotient:
\begin{itemize}
\item the operad of (left) nonassociative permutative algebras $\NAP$ defined by the identity $a_1(a_2a_3)-a_2(a_1a_3)=0$,
\item the (right) pre-Lie operad defined by the identity $(a_1,a_2,a_3)=(a_1,a_3,a_2)$,
\item each operad in the parametric family depending on the parameter $(\gamma:\delta)\in\mathbb{P}^1$ defined by the identity 
\begin{multline*}
\gamma((a_1,a_2,a_3)+(a_3,a_2,a_1)-(a_2,a_1,a_3)-(a_2,a_3,a_1))+\\
\delta((a_1a_2)a_3+(a_3a_2)a_1-(a_1a_3)a_2-(a_3a_1)a_2),
\end{multline*}
\item each operad in the parametric family depending on the parameter $(\alpha:\beta)\in\mathbb{P}^1$ defined by the identity 
\begin{multline*}
\alpha((a_1a_2)a_3+(a_2a_3)a_1+(a_3a_1)a_2+(a_1a_3)a_2+(a_2a_1)a_3+(a_3a_2)a_1)+\\
\beta(a_1(a_2a_3)+a_2(a_3a_1)+a_3(a_1a_2)+a_1(a_3a_2)+a_2(a_1a_3)+a_3(a_2a_1)).
\end{multline*}
\item the magmatic operad of absolutely free nonassociative algebras.
\end{itemize}
\end{theorem}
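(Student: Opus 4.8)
The plan is to deduce the statement from Propositions~\ref{prop:first}--\ref{prop:ninth} by combining Koszul duality with the self-duality $\mathrm{Nov}^!\cong\mathrm{Nov}$ of the Novikov operad. We are classifying Koszul operads $\calO$ with a single binary generator of which the Novikov operad is a quotient. Since a Koszul operad is quadratic, such an $\calO$ has the form $\mathrm{Mag}/(R_\calO)$, the quotient of the magmatic operad by a space of arity-three relations $R_\calO\subseteq\mathrm{Mag}(3)$, and the surjection $\calO\twoheadrightarrow\mathrm{Nov}$ (an isomorphism on the two-dimensional arity-two components) forces $R_\calO\subseteq R_{\mathrm{Nov}}$. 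As Koszul duality is contravariant and sends surjections to surjections, this surjection dualizes to $\mathrm{Nov}^!\twoheadrightarrow\calO^!$, that is $\mathrm{Nov}\twoheadrightarrow\calO^!$; and $\calO$ is Koszul precisely when $\calO^!$ is. Hence $\calO\mapsto\calO^!$ is a bijection between the operads we wish to classify and the Koszul quotients of the Novikov operad, so it suffices to list the latter and dualize.

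The quadratic quotients of the Novikov operad are exactly the nine families of Propositions~\ref{prop:first}--\ref{prop:ninth}: such a quotient adds only arity-three relations, hence is obtained by quotienting out a submodule of $\mathrm{Nov}(3)\cong V_3^2\oplus V_{2,1}^2$, and since $V_3$ and $V_{2,1}$ are non-isomorphic this submodule splits along isotypic components and is isomorphic to $V_3^{i}\oplus V_{2,1}^{j}$ with $i,j\in\{0,1,2\}$, a $\mathbb{P}^1$ of choices occurring whenever a multiplicity is $1$. Those propositions tell us which members are Koszul: the two points $\calO_{((0:1),(0:1))}\cong\NAP^!$ and $\calO_{((1:-1),(1:0))}\cong\Perm$ of the multiplicity $(1,1)$ family (Proposition~\ref{prop:fifth}), the entire multiplicity $(2,1)$ family $\calS_{\gamma,\delta}$ (Proposition~\ref{prop:sixth}), the entire multiplicity $(1,2)$ family (Proposition~\ref{prop:eighth}), and the multiplicity $(2,2)$ quotient, which is the nilpotent operad of index three (Proposition~\ref{prop:ninth}); all remaining members are non-Koszul by Propositions~\ref{prop:first}, \ref{prop:second}, \ref{prop:third}, \ref{prop:fourth} and~\ref{prop:seventh} together with the generic part of Proposition~\ref{prop:fifth}.

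It then remains to dualize each Koszul quotient and recognise it in the list. Three of the five are immediate: $\NAP^!$ and $\Perm$ dualize to $\NAP$ and the (right) pre-Lie operad $\Perm^!$, and the nilpotent operad of index three, being $\mathrm{Mag}^!$, dualizes to the magmatic operad. For the two parametric families I would compute the annihilator of the relation space inside $\mathrm{Mag}(3)$ under the Koszul pairing; a dimension count already pins down its isomorphism type. The relation space of $\calS_{\gamma,\delta}$ is ten-dimensional, so its annihilator is two-dimensional and of type $V_{2,1}$, which is the $S_3$-type of the defining identity of the $(\gamma:\delta)$-family of the third item; the relation space of the multiplicity $(1,2)$ family is eleven-dimensional, so its annihilator is one-dimensional and of type $V_3$, which is the $S_3$-type of the totally symmetric defining identity of the $(\alpha:\beta)$-family of the fourth item.

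The main obstacle is exactly this last identification: one has to verify that the computed annihilators, regarded as parametrized $S_3$-submodules of $\mathrm{Mag}(3)$, coincide with the spans of the displayed identities, and that the projective parameter of each family is transported to the stated one (a priori only up to a projective change of coordinates). Once the bijection $\calO\mapsto\calO^!$ and the list of Koszul quotients are in hand, the rest is bookkeeping.
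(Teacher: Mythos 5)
Your overall strategy coincides with the paper's: the proof of Theorem~\ref{th:Koszul} there consists precisely of observing that Propositions~\ref{prop:first}--\ref{prop:ninth} give the complete list of Koszul quotients of the (left) Novikov operad and then dualizing, with the computations declared ``straightforward and omitted''; your justification of the bijection $\calO\mapsto\calO^!$ (Koszul implies quadratic, $R_{\calO}\subseteq R_{\mathrm{Nov}}$ dualizes to a surjection from $\mathrm{Nov}^!\cong\mathrm{Nov}$ onto $\calO^!$, and Koszulness transfers) is a correct expansion of what the paper leaves implicit, as is your enumeration of the nine isotypic cases and your reading off of which are Koszul.

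The gap is in the final identification, where you assert that ``a dimension count already pins down the isomorphism type'' of the annihilators. For the multiplicity-$(1,2)$ family this inference fails: a one-dimensional $S_3$-module can be $V_3$ or $V_{1,1,1}$, and the Koszul pairing carries a sign twist --- for a binary quadratic operad $\calP$ the relation space of $\calP^!$ is isomorphic to $\calP(3)^\vee\otimes\mathrm{sgn}_3$, not to $\calP(3)^\vee$ (this twist is exactly what makes $\Com^!$ come out as Lie with the antisymmetric Jacobi relation). Since the multiplicity-$(1,2)$ quotient has arity-three component $V_3$, the annihilator is of type $V_{1,1,1}$, i.e., spanned by a \emph{skew-symmetric} identity: the family is $\alpha\sum_\sigma \mathrm{sgn}(\sigma)\,(a_{\sigma(1)}a_{\sigma(2)})a_{\sigma(3)}+\beta\sum_\sigma \mathrm{sgn}(\sigma)\,a_{\sigma(1)}(a_{\sigma(2)}a_{\sigma(3)})$, whose member at $(\alpha:\beta)=(1:-1)$ is the Lie-admissible operad, exactly as the paper's closing Remark states when it calls that defining identity skew-symmetric. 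The unsigned fully symmetric sums displayed in the theorem appear to be a typo in the statement itself, and your proposal inherits rather than detects it: a $V_3$-type identity cannot hold in all Novikov algebras at all, since $R_{\mathrm{Nov}}\cong V_{2,1}^2\oplus V_{1,1,1}^2$ contains no copy of $V_3$ (concretely, $a^2a\neq aa^2$ in the algebra $B_\delta$ of Appendix~\ref{app:2D}, where $(a,a,a)=(\delta^2+\delta)e\neq 0$ for $a=e+f$), so the operad you matched to the fourth item would not even admit the Novikov operad as a quotient. For the same reason your ``two-dimensional hence $V_{2,1}$'' step for the $(2,1)$ family is logically incomplete (two dimensions could also be $V_3\oplus V_{1,1,1}$), though there the conclusion survives because $V_{2,1}\otimes\mathrm{sgn}\cong V_{2,1}$. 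The repair is to replace the dimension count by the isomorphism $R_{\calO^!}\cong\calO(3)^\vee\otimes\mathrm{sgn}_3$ and then carry out the matching of parametrized submodules and projective parameters that you correctly flag as the remaining verification.
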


\begin{proof}
First, we note that from Propositions \ref{prop:first}--\ref{prop:ninth} we obtain a complete list of Koszul quotients of the left Novikov operad. Computing Koszul duals, we obtain a complete list of Koszul operads admitting the right Novikov operad as a quotient. The corresponding computations are straightforward and omitted.
\end{proof}

\begin{remark}
Recall that a Lie-admissible algebra \cite{MR27750} is an algebra with one binary operation for which the commutator $[a,b]=ab-ba$ satisfies the Jacobi identity. In the $(\alpha:\beta)$ parametric family of Theorem \ref{th:Koszul}, the operad for $(\alpha:\beta)=(1:-1)$ is easily seen to be the operad of Lie-admissible algebras. The defining identity of that operad is a skew-symmetric identity of arity three; other identities of that type were studied by Dzhumadildaev \cite{MR2676255} who in particular introduced alia (anti-Lie admissible) algebras that are defined, in the polarized form, by the identity
 \[
[a_1,a_2]\cdot a_3+[a_2,a_3]\cdot a_1+[a_3,a_1]\cdot a_2=0,
 \]
 and left alia algebras that are defined, in the polarized form, by the identity
 \[
[a_1,a_2] a_3+[a_2,a_3] a_1+[a_3,a_1] a_2=0.
 \]
One can also check that the operad for $(\alpha:\beta)=(1:1)$ of that family is the operad of alia algebras, and the operad for any other value of $(\alpha:\beta)$ is isomorphic to the operad of left alia algebras,
\end{remark}

\appendix

\section{Quotient by a copy of the two-dimensional module: technical lemmas}

\subsection{Proof of Lemma \ref{lm:symmetryI}}\label{app:symmetryI}

The proof consists of several lemmas that progress by gradual multilinearization. Recall that throughout this section we may use Equation \eqref{eq:I}.

\begin{lemma}\label{lm:sym1aaab}
We have 
\begin{equation}\label{eq:I13}
a''b'a^2-a''a'ab=0.
\end{equation}
\end{lemma}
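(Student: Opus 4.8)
The plan is to work entirely in the multidegree $(3,1)$ in the variables $a,b$, where a Novikov differential monomial carries three copies of $a$ and one of $b$ with the derivative orders summing to $3$. There are exactly seven such monomials, namely
\[
a'''a^2b,\ a''a'ab,\ (a')^3b,\ a''b'a^2,\ (a')^2b'a,\ b''a'a^2,\ b'''a^3,
\]
and the target identity \eqref{eq:I13} is a single linear relation between the second and the fourth of them. So it suffices to produce enough linear relations among these seven monomials, all of them consequences of \eqref{eq:I}, and then to extract the desired one by linear algebra.

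First I would manufacture four such relations by applying to \eqref{eq:I} the operations that preserve the class of Novikov differential monomials and land in multidegree $(3,1)$: (i) multiplying \eqref{eq:I} by $a'$; (ii) differentiating \eqref{eq:I} and then multiplying by $a$; (iii) applying the polarization $\Delta_{a\mapsto a'a}$ (the derivation commuting with the differential that sends $a$ to $a'a$ and $b$ to $0$, exactly as used in Section~\ref{sec:triv}); and (iv) substituting $b\mapsto b'a$. Expanding each of these by the Leibniz rule and collecting terms in the seven-element basis above is routine and produces four coefficient vectors depending on $\delta$. The five monomials $a'''a^2b$, $(a')^3b$, $(a')^2b'a$, $b''a'a^2$, $b'''a^3$ are the ones that must be cancelled, leaving only $a''a'ab$ and $a''b'a^2$.

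The elimination is the main step, and the one place where care is needed. The hard part will be that a careless combination of the four relations leaves a factor linear in $\delta$ (of the shape $1+\delta$ or $2+\delta$) multiplying the difference $a''a'ab-a''b'a^2$, which would force a separate discussion of the degenerate values $\delta=-1$ and $\delta=-2$; since \eqref{eq:I} carries the free parameter $\delta$ and we need the lemma for every value of it, this must be avoided. I expect this to be the main obstacle, and it is resolved by choosing the combination in which the two linear factors differ by a constant and hence can be played against each other. Concretely, the combination ``(relation from (iii)) $-\,2\,$(relation from (i)) $-\,$(relation from (ii)) $-\,$(relation from (iv))'' cancels all five unwanted monomials \emph{and} all of the $\delta$-dependence, leaving precisely $a''a'ab-a''b'a^2=0$ with coefficient $1$. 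This yields \eqref{eq:I13} uniformly in $\delta$, completing the proof.
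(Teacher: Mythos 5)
Your proposal is correct and takes essentially the same route as the paper: the paper's proof derives exactly your four relations (multiplying \eqref{eq:I} by $a'$, differentiating \eqref{eq:I} and multiplying by $a$, applying $\Delta_{a\mapsto a'a}$, and substituting $b\mapsto b'a$), and its two-stage elimination, forming \eqref{eq:I10} and \eqref{eq:I11} and subtracting, is precisely your combination (iii) $-\,2\,$(i) $-$ (ii) $-$ (iv). I verified that this combination does cancel the five unwanted monomials together with all $\delta$-dependence, leaving $a''a'ab-a''b'a^2=0$ uniformly in $\delta$, just as you claim.
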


\begin{proof}
Multiplying \eqref{eq:I} by $a'$, we obtain 
\begin{equation}\label{eq:I06}
a''a'ab-b''a'a^2+\delta ((a')^3b-(a')^2b'a)=0.
\end{equation}
Applying the derivation $\Delta_{a\mapsto a'a}$ to \eqref{eq:I}, we obtain 
\begin{equation}\label{eq:I08}
a'''a^2b+4a''a'ab-2b''a'a^2+\delta (2a''a'ab+2(a')^3b-a''b'a^2-2(a')^2b'a)=0.
\end{equation}
Subtracting twice \eqref{eq:I06} from \eqref{eq:I08}, we obtain
\begin{equation}\label{eq:I10}
a'''a^2b+2a''a'ab+\delta (2a''a'ab-a''b'a^2)=0.
\end{equation}
Substituting $b:=b'a$ into \eqref{eq:I}, we obtain
\begin{equation}\label{eq:I07}
b'''a^3+2 b''a'a^2+\delta b''a'a^2=0.  
\end{equation}
Taking the derivative of \eqref{eq:I} and multiplying by $a$, we obtain
\begin{equation}\label{eq:I09}
a'''a^2b+a''b'a^2+a''a'ab-b'''a^3-2b''a'a^2+\delta (2a''a'ab-a''b'a^2-b''a'a^2)=0.
\end{equation}
Adding \eqref{eq:I07} to \eqref{eq:I09}, we obtain
\begin{equation}\label{eq:I11}
a'''a^2b+(1+2\delta)a''a'ab+(1-\delta)a''b'a^2=0.
\end{equation}
Subtracting \eqref{eq:I11} from \eqref{eq:I10}, we obtain
 \[
a''a'ab-a''b'a^2=0.
 \]
\end{proof}

\begin{lemma}
We have 
\begin{equation}\label{eq:I28}
a''b'ab-a''a'b^2=0.
\end{equation}
\end{lemma}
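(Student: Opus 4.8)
The plan is to read \eqref{eq:I28} as the next partial multilinearization in the chain leading to Lemma~\ref{lm:symmetryI}: the identity \eqref{eq:I13} of Lemma~\ref{lm:sym1aaab} is the specialization $c=d=a$ of the symmetry $a''b'cd-a''c'bd=0$, while \eqref{eq:I28} is its specialization $c=a$, $d=b$. Morally one passes from the former to the latter by polarizing one of the two undifferentiated factors of $a''b'a^2$ (and of $a''a'ab$) from $a$ into $b$. I would therefore work inside the multihomogeneous component of degree $(2,2)$ in $a,b$ of total derivative order three; this component has the eight-element basis
\[
a'''ab^2,\ a''a'b^2,\ a''b'ab,\ (a')^2b'b,\ b''a'ab,\ (b')^2a'a,\ b'''a^2b,\ b''b'a^2,
\]
and the goal is to produce enough relations among these monomials to isolate $a''b'ab-a''a'b^2$.

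First I would manufacture three identities in this component out of tools already at hand. (i) Multiplying \eqref{eq:I} by $b'$ yields
\[
a''b'ab-b''b'a^2+\delta\bigl((a')^2b'b-(b')^2a'a\bigr)=0.
\]
(ii) Applying the polarization derivation $\Delta_{a\mapsto b}$ to \eqref{eq:I13} yields
\[
a''b'ab-a''a'b^2-b''a'ab+b''b'a^2=0,
\]
where the factor $2$ produced on $a''b'ab$ by the two undifferentiated $a$'s of $a''b'a^2$ is essential. (iii) Applying $\Delta_{a\mapsto b}$ to \eqref{eq:I06} (which is itself \eqref{eq:I} multiplied by $a'$) yields
\[
a''a'b^2+a''b'ab-b''a'ab-b''b'a^2+2\delta\bigl((a')^2b'b-(b')^2a'a\bigr)=0.
\]

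The final step is a single linear combination. The expression $2\,(\mathrm{i})-(\mathrm{iii})+(\mathrm{ii})$ is arranged so that the two all-first-derivative monomials $(a')^2b'b$ and $(b')^2a'a$ cancel already in $2\,(\mathrm{i})-(\mathrm{iii})$, leaving $-a''a'b^2+a''b'ab+b''a'ab-b''b'a^2$; adding $(\mathrm{ii})$ then cancels $b''a'ab$ and $b''b'a^2$. The third-derivative monomials $a'''ab^2$ and $b'''a^2b$ never appear, because none of the three ingredients is obtained by differentiating. What remains is exactly $2\bigl(a''b'ab-a''a'b^2\bigr)=0$, which is \eqref{eq:I28}.

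The hard part here is not conceptual but a matter of careful bookkeeping: one must correctly track the multiplicities that $\Delta_{a\mapsto b}$ creates when acting on monomials in which $a$ occurs repeatedly (with and without derivatives), since it is precisely these multiplicities that make the designed cancellation work. Because every ingredient is produced by an operation already sanctioned in the text---multiplication by the derivative of a variable, and the derivation $\Delta_{a\mapsto b}$ applied to previously established identities---no new machinery is needed, and the verification reduces to checking the three-term cancellation displayed above.
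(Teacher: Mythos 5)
Your proof is correct: I checked all three ingredients and the final combination. Multiplying \eqref{eq:I} by $b'$ gives exactly your (i) (this is the paper's \eqref{eq:I19}); applying $\Delta_{a\mapsto b}$ to \eqref{eq:I13} gives $b''b'a^2+2a''b'ab-(b''a'ab+a''b'ab+a''a'b^2)=0$, i.e.\ your (ii), with the multiplicity $2$ from the two undifferentiated $a$'s tracked correctly; and applying $\Delta_{a\mapsto b}$ to \eqref{eq:I06} gives $a''a'b^2+a''b'ab-b''a'ab-b''b'a^2+2\delta((a')^2b'b-(b')^2a'a)=0$, your (iii), again with correct multiplicities ($3(a')^2b'b$ from $(a')^3b$ and $2(b')^2a'a+(a')^2b'b$ from $(a')^2b'a$). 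The combination $2(\mathrm{i})-(\mathrm{iii})+(\mathrm{ii})$ then indeed collapses to $2(a''b'ab-a''a'b^2)=0$, and characteristic zero lets you divide by $2$. This is a genuinely different and shorter route than the paper's: the paper does not reuse Lemma \ref{lm:sym1aaab} here, but instead rederives everything from \eqref{eq:I} via the substitution $b:=b'b$, the derivation $\Delta_{a\mapsto a'b}$, multiplication by $b'$, an $a\leftrightarrow b$ interchange, and a derivative-then-multiply-by-$b$ step, producing the chain \eqref{eq:I17}--\eqref{eq:I26} that passes through the third-derivative monomials $a'''ab^2$ and $b'''a^2b$ before arriving at $b''a'ab-b''b'a^2=0$ (your identity after renaming). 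Your argument buys economy by exploiting the already-proved symmetry \eqref{eq:I13} through the simplest polarization $\Delta_{a\mapsto b}$, which keeps the whole computation inside the five second-derivative-or-lower monomials of multidegree $(2,2)$ and avoids third derivatives entirely, as you correctly note; the paper's longer computation has the mild virtue of being self-contained within the lemma, but all the operations you use (multiplication by a derivative, polarization of an established identity) are exactly those the paper sanctions for preserving the Novikov differential realization, so nothing is lost.
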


\begin{proof}
Substituting $b:=b'b$ into \eqref{eq:I}, we obtain 
\begin{equation}\label{eq:I17}
b'''ba^2+3b''b'a^2-a''b'ab+\delta (b''a'ab+a'(b')^2a-(a')^2b'b)=0.
\end{equation}
Applying the derivation $\Delta_{a\mapsto a'b}$ to \eqref{eq:I}, we obtain 
\begin{equation}\label{eq:I18}
a'''ab^2+2a''b'ab-b''a'ab+a''a'b^2+\delta (2a''a'b^2-a''b'ab+(a')^2b'b-a'(b')^2a)=0.
\end{equation}
Multiplying \eqref{eq:I} by $b'$, we obtain
\begin{equation}\label{eq:I19}
a''b'ab-b''b'a^2+\delta ((a')^2b'b-a'(b')^2a)=0.
\end{equation}
Subtracting \eqref{eq:I19} from \eqref{eq:I18}, one obtains 
\begin{equation}\label{eq:I20}
a'''ab^2+(1-\delta)a''b'ab-b''a'ab+(1+2\delta)a''a'b^2+b''b'a^2=0.
\end{equation}
Adding \eqref{eq:I19} to \eqref{eq:I17}, we obtain 
\begin{equation}\label{eq:I21}
b'''a^2b+2b''b'a^2+\delta b''a'ab=0.
\end{equation}
Interchanging $a$ and $b$ in \eqref{eq:I21}, we obtain
\begin{equation}\label{eq:I22}
a'''ab^2+2a''a'b^2+\delta a''b'ab=0.
\end{equation}
The difference of \eqref{eq:I20} and \eqref{eq:I22} is
\begin{equation}\label{eq:I23}
(-1 +2 \delta) a''a'b^2+(1-2\delta) a''b'ab-b''a'ab+b''b'a^2=0. 
\end{equation}
Taking the derivative of \eqref{eq:I} and multiplying by $b$, we obtain
\begin{equation}\label{eq:I24}
a'''ab^2-b'''a^2b+(1-\delta)a''b'ab+(1+2\delta)a''a'b^2-(2+\delta)b''a'ab=0.
\end{equation}
A linear combination of \eqref{eq:I21}, \eqref{eq:I22}, and \eqref{eq:I24} gives us
\begin{equation}\label{eq:I25}
(-1+2\delta) a''a'b2+(1 - 2\delta) a''b'ab-2 b''a'ab+2b''b'a^2=0.
\end{equation}
Finally, the difference of \eqref{eq:I23} and \eqref{eq:I25} is
\begin{equation}\label{eq:I26}
b''a'ab- b''b'a^2=0,
\end{equation}
which is what we want up to renaming variables.
\end{proof}

\begin{lemma}
We have
\begin{equation}\label{eq:I39}
c''a'ab-c''b'a^2=0.
\end{equation}
\end{lemma}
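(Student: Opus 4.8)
The plan is to prove \eqref{eq:I39} by the same partial-multilinearization technique already used for \eqref{eq:I13} and \eqref{eq:I28}, now introducing a third variable $c$. Concretely, I work inside the multidegree-$(2,1,1)$ component (two $a$'s, one $b$, one $c$) of the free commutative associative differential algebra; its Novikov part (total derivative order $3$) is spanned by the thirteen differential monomials $a'''abc,\ b'''a^2c,\ c'''a^2b,\ a''a'bc,\ a''b'ac,\ a''c'ab,\ b''a'ac,\ b''c'a^2,\ c''a'ab,\ c''b'a^2,\ a'b'c'a,\ (a')^2c'b,\ (a')^2b'c$. The goal is to show that $c''a'ab-c''b'a^2$ lies in the span of the consequences of \eqref{eq:I} in this component.

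I generate relations using the admissible operations (multiplying by a first derivative, differentiating and multiplying by a variable, and the substitutions $x\mapsto x'y$), all of which preserve the Novikov grading. Two families are cheap and do not raise derivative orders: multiplying \eqref{eq:I} by $c'$, and applying the derivations $\Delta_{a\mapsto c}$ and $\Delta_{b\mapsto c}$ to the two-variable identities \eqref{eq:I13}, \eqref{eq:I28} and to the intermediate identities \eqref{eq:I06}, \eqref{eq:I19}, \eqref{eq:I26}. Already $\Delta_{a\mapsto c}$ of \eqref{eq:I13} and $\Delta_{b\mapsto c}$ of \eqref{eq:I28} combine to give $c''a'ab-c''b'a^2=3(a''b'ac-a''a'bc)$, so \eqref{eq:I39} reduces to the symmetry $a''a'bc=a''b'ac$ (the instance ``double derivative on $a$, remaining letters $\{a,b,c\}$''). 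Pinning this down requires the remaining relations, which come from differentiating \eqref{eq:I} and multiplying by $c$, and from the substitutions $a\mapsto a'c$ and $a\mapsto c'a$; these unavoidably produce the third-derivative monomials $a'''abc$, $b'''a^2c$, $c'''a^2b$, which then have to be eliminated using further images under $\Delta_{a\mapsto c}$ of the third-derivative two-variable identities \eqref{eq:I10} and \eqref{eq:I11}.

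Assembling the resulting relations into a single matrix and row-reducing, exactly as in \eqref{eq:mat1}--\eqref{eq:mat4}, isolates $c''a'ab-c''b'a^2=0$. Since the multidegree-$(2,1,1)$ subspace of $\calQ_{\gamma,\delta}(4)$ is four-dimensional (its dimension $2K_{(4),(2,1,1)}+K_{(3,1),(2,1,1)}=2+2=4$ matches $V_4^2\oplus V_{3,1}$), the relation space must have rank nine, so the task is precisely to verify that the relations I have written down achieve this rank. The hypothesis $(\gamma:\delta)\ne(0:1)$ (that is, $\gamma=1$ with $\delta$ free) enters exactly here, as the nonvanishing of the relevant $\delta$-dependent determinant, in the same way as the determinant computations in Theorem \ref{th:rel1}.

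The main obstacle is this final elimination step: one must produce enough independent relations to kill the three third-derivative monomials and the three triple-single-derivative monomials simultaneously, and confirm that the determinant governing the elimination does not vanish for $(\gamma:\delta)\ne(0:1)$. The order-preserving relations alone are rank-deficient — they yield only $c''a'ab-c''b'a^2=3(a''b'ac-a''a'bc)$ without forcing the right-hand side to vanish — so the derivative-raising operations and the subsequent elimination of third derivatives are genuinely necessary, which is what makes the computation ``slightly technical'' and worth deferring to the appendix.
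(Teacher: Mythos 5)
Your framework is essentially the paper's (pass to the multidegree-$(2,1,1)$ component of the free commutative differential algebra and eliminate among the thirteen monomials), and your one verified intermediate step is correct: adding $\Delta_{a\mapsto c}$ applied to \eqref{eq:I13} and $\Delta_{b\mapsto c}$ applied to \eqref{eq:I28} really does give $c''a'ab-c''b'a^2=3(a''b'ac-a''a'bc)$. But the attempt stops exactly where the lemma's content lies: you assert that assembling the derivative-raising relations and row-reducing ``isolates'' the claim, without exhibiting any linear combination, and you yourself flag this elimination as the main obstacle. The paper closes it with just four relations: multiplying \eqref{eq:I} by $c'$ (giving \eqref{eq:I29}), applying $\Delta_{a\mapsto a'c}$ (giving \eqref{eq:I31}), substituting $b:=b'c$ (giving \eqref{eq:I32}), and differentiating \eqref{eq:I} and multiplying by $c$ (giving \eqref{eq:I33}); one checks directly that \eqref{eq:I31}$-$\eqref{eq:I32}$-$\eqref{eq:I33}$-2\cdot$\eqref{eq:I29} equals $c''a'ab-c''b'a^2$ on the nose, with every $\delta$-term cancelling. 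Note that your list of generating moves omits the substitution $b\mapsto b'c$ — one of the four relations the paper actually needs — replacing it by $a\mapsto c'a$, which drags in the monomial $c'''a^2b$ and forces the extra third-derivative eliminations you describe; whether your particular list spans the required element is precisely the step you left unverified.

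Two of your guiding claims are also off. First, no $\delta$-dependent determinant can govern the final step: the lemma holds for \emph{every} $\delta$ once $\gamma$ is normalized to $1$, and indeed in the paper's four-term combination the $\delta$-parts cancel identically; the hypothesis $(\gamma:\delta)\ne(0:1)$ enters only through the presence of the $a''ab-b''a^2$ part of \eqref{eq:I}, not through a nonvanishing determinant as in the matrix computations of Theorem \ref{th:rel1}. Had a genuine determinant condition appeared, the lemma would fail for special $\delta$, contradicting its statement. Second, your rank-nine count via the dimension of the $(2,1,1)$-weight space of $\calQ_{\gamma,\delta}(4)$ is circular: that dimension is part of the conclusion of Theorem \ref{th:rel2}, whose proof rests on the present lemma (through Lemma \ref{lm:symmetryI} and Lemma \ref{lem3}), so it can serve at best as a sanity check, never as justification. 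Relatedly, your reduction target $a''b'ac=a''a'bc$ is an instance of \eqref{eq:I62}, which the paper derives only \emph{after}, and using, \eqref{eq:I39}; it must therefore come out of your elimination directly rather than by appeal to the symmetry lemma.
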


\begin{proof}
Multiplying \eqref{eq:I} by $c'$, we obtain
\begin{equation}\label{eq:I29}
a''c'ab-b''c'a^2+\delta ((a')^2c'b-a'b'c'a)=0.
\end{equation}
Applying the derivation $\Delta_{a\mapsto a'c}$ to \eqref{eq:I}, we obtain 
\begin{multline}\label{eq:I31}
a'''abc+2a''c'ab+a''a'bc-2b''a'ac+c''a'ab+\\ \delta (2a''a'bc+2(a')^2c'b-a''b'ac-a'b'c'a-(a')^2b'c)=0.
\end{multline}
Substituting $b:=b'c$ into \eqref{eq:I}, we obtain 
\begin{equation}\label{eq:I32}
b'''a^2c-a''b'ac+2b''c'a^2+c''b'a^2+\delta (b''a'ac-(a')^2b'c+a'b'c'a)=0.
\end{equation}
Taking the derivative of \eqref{eq:I} and multiplying by $c$, we obtain
\begin{equation}\label{eq:I33}
a'''abc-b'''a^2c+a''b'ac+a''a'bc-2b''a'ac+\delta (2a''a'bc-a''b'ac-b''a'ac)=0.
\end{equation}
Taking linear combinations of equations \eqref{eq:I29}--\eqref{eq:I33}, we obtain \eqref{eq:I39}.
\end{proof}

We are now ready to prove the identity of Lemma \ref{lm:symmetryI}. Applying the derivation $\Delta_{a\mapsto d}$ to \eqref{eq:I39}, we obtain 
\begin{equation}\label{eq:I56}
c''d'ab+c''a'bd-2c''b'ad=0.
\end{equation}
Interchanging $c$ and $d$ in \eqref{eq:I56} gives us
\begin{equation}\label{eq:I57}
d''c'ab+d''a'bc-2d''b'ac=0. 
\end{equation}
Applying the derivations $\Delta_{a\mapsto c}$ and $\Delta_{b\mapsto d}$ to \eqref{eq:I26}, we obtain 
\begin{equation}\label{eq:I58}
d''c'ab+b''c'ad+d''a'bc+b''a'cd-2d''b'ac-2b''d'ac=0.
\end{equation}
From \eqref{eq:I57} and \eqref{eq:I58}, we obtain
\begin{equation}\label{eq:I59}
b''c'ad+b''a'cd-2b''d'ac=0. 
\end{equation}
Interchanging $b$ and $c$ in \eqref{eq:I56} gives us 
\begin{equation}\label{eq:I60}
b''d'ac+b''a'cd-2b''c'ad=0.
\end{equation}
Finally, from \eqref{eq:I59} and \eqref{eq:I60} we obtain $b''c'ad-b''d'ac=0$, 
which is what we want up to renaming variables.

\subsection{Proof of Lemma \ref{lm:symmetryII}}\label{app:symmetryII}

The proof consists of several lemmas that progress by gradual multilinearization. Recall that throughout this section we may use Equation \eqref{eq:II}.

Substituting $b:=a'a$, $b:=b'a$, and $b:=a'b$ into \eqref{eq:II}, we obtain 
\begin{equation}\label{eq:I73}
a''a'a^2=b''a'a^2=a''a'ab=0.
\end{equation}
Taking the derivative of \eqref{eq:II} and multiplying by $a$, we obtain 
 \[
2a''a'ab-a''b'a^2-b''a'a^2=0,
 \]
which, together with what we already proved, implies $a''b'a^2=0$.

Substituting $b:=b'b$ into \eqref{eq:II} gives us
 \[
b''a'ab+a'(b')^2a-(a')^2b'b=0,
 \]
which, modulo Equation \eqref{eq:II}, becomes simply $b''a'ab=0$. Applying the derivation $\Delta_{a\mapsto a'b}$ to \eqref{eq:II}, we obtain 
 \[
2a''a'b^2-a''b'ab+(a')^2b'b-a'(b')^2a=0,
 \]
which, modulo Equation \eqref{eq:II}, becomes simply $2a''a'b^2-a''b'ab=0$, which, using the relation $b''a'ab=0$ that we already proved (with $a$ and $b$ interchanged), implies $a''a'b^2=0$.

Multiplying \eqref{eq:II} by $c'$, we obtain
 \[
(a')^2c'b-a'b'c'a=0.
 \]
Interchanging $b$ and $c$ in that latter equation, we obtain
 \[
(a')^2b'c-a'b'c'a=0.
 \]
Substituting $b:=b'c$ into \eqref{eq:II}, we obtain 
 \[
b''a'ac-(a')^2b'c+a'b'c'a=0.
 \]
The two latter equations imply $b''a'ac=0$. Identity $a''a'bc=0$ is symmetric in $b,c$ and hence follows from already proved $a''a'b^2=0$.
Taking the derivative of \eqref{eq:II} and multiplying by $c$, we obtain
 \[
2a''a'bc-a''b'ac-b''a'ac=0,
 \]
which, using the already proved identities, implies $a''b'ac=0$. Applying the derivation $\Delta_{a\to c}$ to $b''a'a^2=0$ and using the already proved identities gives us $b''c'a^2=0$. Finally, since the identity $a''b'cd=0$ is symmetric in $c,d$, it follows from its versions where at most three letters are different. 

\section{A family of two-dimensional Novikov algebras}\label{app:2D}

Let us consider the algebra $B_\delta$ with a basis $e,f$ and the multiplication table
 \[
ee=0, \quad ef=-\delta e,\quad
fe=e, \quad ff=f.
 \]

\begin{proposition}\label{prop:Bdelta}
The algebra $B_\delta$ is a Novikov algebra. Moreover, the identity 
 \[
((a,a,b)-(b,a,a))+\delta(a(ab)-ba^2)=0
 \]
holds in this algebra.
\end{proposition}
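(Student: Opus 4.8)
The plan is to verify directly that the algebra $B_\delta$ defined by the multiplication table
\[
ee=0,\quad ef=-\delta e,\quad fe=e,\quad ff=f
\]
satisfies the two defining Novikov identities and the additional identity in the statement. Since all the identities in question are multilinear, by Proposition \ref{prop:multlin} (and the polarization principle of Proposition \ref{prop:sym-id}) it suffices to check them on basis elements. The first step is therefore to tabulate the products and the associators on the basis $\{e,f\}$: compute $(x,y,z)=(xy)z-x(yz)$ for all $2^3=8$ triples $x,y,z\in\{e,f\}$, and record the left-multiplication values $x(yz)$ as well. This is a finite computation; I would organize it into a small table so that each subsequent identity check is a lookup rather than a recomputation.

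The second step is to verify the two Novikov axioms. For the \emph{left-symmetry of the associator} $(x,y,z)=(x,z,y)$, I would run through the eight triples and confirm the associator is symmetric in its last two arguments; the key feature making this work is that $e$ is (up to scalar) absorbing, so most associators collapse. For the \emph{left-commutativity} $x(yz)=y(xz)$, I would similarly check all eight triples, again using that $ee=0$ and that the only products producing $f$ require both factors to be $f$. The main thing to keep track of here is the sign coming from $ef=-\delta e$ versus $fe=e$, which is precisely the asymmetry encoding the parameter $\delta$.

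The third step is to verify the stated identity $((a,a,b)-(b,a,a))+\delta(a(ab)-ba^2)=0$. Here $a^2=aa$, and the natural approach is to use the computation already performed inside the proof of Theorem \ref{th:rel2}, where the operators $L_{e+f}$ and $R_{e+f}$ acting on $B_\delta$ were explicitly evaluated; this supplies exactly the associator and left-multiplication values I need on the diagonal element $a=e+f$. Since the identity is multihomogeneous of degree $(2,1)$ in $(a,b)$, by Proposition \ref{prop:sym-id} it is equivalent to its full multilinearization, and I would check it by substituting each of the four basis-pair choices $(a,b)\in\{(e,e),(e,f),(f,e),(f,f)\}$ and confirming that the two terms cancel once the factor $\delta$ is inserted.

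The main obstacle is purely bookkeeping: the noncommutativity $ef=-\delta e\neq fe=e$ means I cannot appeal to any symmetry to cut down the number of cases, so every associator and every left-multiplied triple must be evaluated with the correct ordering and the correct power of $\delta$. The substantive check is that $ef+fe=(1-\delta)e$ propagates correctly through the associator identities; I expect the first Novikov axiom to be essentially automatic (both sides often vanish), while the left-commutativity and the $\delta$-dependent identity will require care to see that the $\delta$-coefficients balance exactly. No deep idea is needed beyond a disciplined case analysis, so the proof should be short once the multiplication table is laid out.
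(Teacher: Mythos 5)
Your verification of the two Novikov axioms is fine and matches the paper's computation (the paper tabulates all sixteen triple products, observes that the only nonzero associator is $(e,f,f)$, which is automatically symmetric in its last two arguments, and that the only nontrivial instance of left-commutativity is $e(ff)=f(ef)$). The gap is in your third step. The identity $((a,a,b)-(b,a,a))+\delta(a(ab)-ba^2)=0$ is \emph{quadratic} in $a$, and you correctly note that it is equivalent to its multilinearization --- but the multilinearization is an identity in \emph{three} slots (say $a,c,b$, symmetric in $a,c$), so verifying it requires substituting basis \emph{triples}, not the four basis pairs $(a,b)\in\{(e,e),(e,f),(f,e),(f,f)\}$ you list. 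Substituting basis pairs into the original identity only sees the diagonal cases $a=c$ of the multilinear form and misses the polarization cross-term $a=e$, $c=f$. This is not a harmless omission: in the paper's case analysis the two nontrivial cancellations are precisely $(f,e,f)$ (diagonal, which your pair $(a,b)=(f,e)$ does capture, giving $-(\delta^2+\delta)e+\delta(1+\delta)e=0$) and the \emph{mixed} case $(e,f,f)$, which evaluates to $(\delta^2+\delta)e+\delta(-\delta e-e)=0$ and is invisible to all four of your substitutions. So as written, your plan would declare the identity verified without ever checking the one case where the $\delta$-coefficients must balance nontrivially. Your opening claim that ``all the identities in question are multilinear'' is also false for this third identity, which is where the confusion enters.

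The fix is cheap: either check the multilinearized identity on all basis triples (the paper does this, using the $a\leftrightarrow c$ symmetry to reduce eight triples to six), or, if you prefer to work with the unpolarized identity, exploit that it is quadratic in $a$ and check $a\in\{e,f,e+f\}$ for each $b\in\{e,f\}$, since vanishing of a quadratic map at $e$, $f$, and $e+f$ forces the vanishing of its polarization. One further caution: importing the values of $L_{e+f}$ and $R_{e+f}$ from the proof of Theorem \ref{th:rel2} is not strictly circular (those are raw multiplications in $B_\delta$, independent of the proposition), but it is backwards as a matter of logical architecture, since Theorem \ref{th:rel2} cites Proposition \ref{prop:Bdelta}; the appendix proof is meant to be self-contained, and the direct computation is short enough that it should be.
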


\begin{proof}
Let us first check that this is a Novikov algebra. We compute the triple products 
\begin{gather*}
(ee)e=0,\quad e(ee)=0,\quad
(ef)e=0,\quad e(fe)=0,\\
(fe)e=0,\quad f(ee)=0,\quad
(ff)e=e,\quad f(fe)=e,\\
(ee)f=0,\quad e(ef)=0,\quad (ef)f=\delta^2 e,\quad e(ff)=-\delta e,\\
(fe)f=-\delta e,\quad f(ef)=-\delta e,\quad
(ff)f=f,\quad f(ff)=f,
\end{gather*} 
and see that the only associator that is nonzero if $(e,f,f)$ which is automatically symmetric, and that the only nontrivial left-commutativity to check is $e(ff)=f(ef)$ which does indeed hold. 

The multilinear form of $((a,a,b)-(b,a,a))+\delta(a(ab)-ba^2))=0$ is
 \[
((a,c,b)+(c,a,b)-(b,a,c)-(b,c,a))+\delta(a(cb)+c(ab)-b(ac)-b(ca))=0.
 \]
This identity is symmetric in $a,c$, so there are just the following choices for $(a,b,c)$ to check:
\begin{itemize}
\item case $(e,e,e)$: the identity trivially holds (all terms are zero), 
\item case $(e,e,f)$: the identity trivially holds (all terms are zero),
\item case $(f,e,f)$: the identity becomes  
 \[
((f,f,e)+(f,f,e)-(e,f,f)-(e,f,f))+\delta(f(fe)+f(fe)-e(ff)-e(ff))=0,
 \] that is $-2(\delta^2+\delta)e+\delta(2e+2\delta e)$, which is true,
\item case $(e,f,e)$: the identity trivially holds (all terms are zero), 
\item case $(e,f,f)$: the identity becomes 
  \[
((e,f,f)+(f,e,f)-(f,e,f)-(f,f,e))+\delta(e(ff)+f(ef)-f(ef)-f(fe)),
 \] 
that is $(\delta^2+\delta)e+\delta(-\delta e-e)$, which is true, 
\item case $(f,f,f)$: the identity trivially holds. 
\end{itemize}
\end{proof}

\begin{remark}
Novikov algebras of dimension two were classified by Bai and Meng in \cite{MR1818753}; the algebra we consider is, up to a sign of the parameter, the algebra N6 in their classification.  
\end{remark}

\section{Three operadic Gr\"obner bases calculations}\label{app:Grobner}

In this section, we shall show that, for a good choice of generators, some quotients of the Novikov operad have a finite Gr\"obner basis of relations. We begin with recording the following lemma that can be checked by a direct calculation.

\begin{lemma}
Consider the operations $a\cdot b=ab+ba$ and $[a,b]=ab-ba$ in a Novikov algebra. These operations satisfy the identities
\begin{gather*}
[[a_1,a_2],a_3]=[a_1,[a_2,a_3]]+[[a_1,a_3],a_2],\\
[a_1\cdot a_2,a_3]=[a_1\cdot a_3,a_2]+2a_1\cdot[a_2,a_3]+[a_1,[a_2,a_3]],\\
2[a_1,a_2]\cdot a_3=[a_1\cdot a_3,a_2]+[[a_1,a_3],a_2]+[a_1,a_2\cdot a_3]+[a_1,[a_2,a_3]], \\
2(a_1\cdot a_3)\cdot a_2=[a_1\cdot a_3,a_2]+[[a_1,a_3],a_2]+2a_1\cdot (a_2\cdot a_3)+[a_1,a_2\cdot a_3]+[a_1,[a_2,a_3]],\\
2[a_1,a_3]\cdot a_2=[a_1\cdot a_3,a_2]+[[a_1,a_3],a_2]+2a_1\cdot[a_2,a_3]+[a_1,a_2\cdot a_3]+[a_1,[a_2,a_3]],\\
2(a_1\cdot a_2)\cdot a_3=[a_1\cdot a_3,a_2]+[[a_1,a_3],a_2]+2a_1\cdot(a_2\cdot a_3)+2a_1\cdot[a_2,a_3]+[a_1,a_2\cdot a_3]+[a_1,[a_2,a_3]].
\end{gather*}
\end{lemma}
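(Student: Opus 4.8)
The plan is to verify all six identities by passing to a faithful commutative model of Novikov algebras, in which every bracketing ambiguity disappears and the check reduces to matching coefficients of commutative monomials. Since each of the six relations is multilinear of arity three, it suffices to verify it in the free Novikov algebra on three generators $a_1,a_2,a_3$; by the realization of Dzhumadildaev and L\"ofwall \cite[Th.~7.8]{MR1918188} (or, more generally, by the embedding of \cite{BCZ18}) the free Novikov algebra sits faithfully inside the free commutative associative differential algebra, with the Novikov product given by $a\ast b=a'b$. An identity verified there therefore holds in every Novikov algebra.

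In this model the two polarized operations translate into transparent differential expressions:
\[
a\cdot b=a\ast b+b\ast a=a'b+ab'=(ab)',\qquad [a,b]=a\ast b-b\ast a=a'b-b'a,
\]
where juxtaposition now denotes the \emph{commutative} associative product of the ambient algebra and $'$ its derivation. First I would record the one pleasant simplification that keeps the computation short: for any two elements $u,v$ one has $[u,v]'=u''v-v''u$, the mixed term $u'v'$ cancelling, so nested commutators collapse quickly. Then, proceeding identity by identity, I would expand each side by the Leibniz rule into a linear combination of commutative monomials $a_1^{(i_1)}a_2^{(i_2)}a_3^{(i_3)}$ with $i_1+i_2+i_3=2$ --- of which there are only six distinct types --- and compare the two sides coefficientwise. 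The first identity is simply the Jacobi identity for the commutator (the Novikov product being Lie-admissible), which in this model is immediate from the displayed formula for $[u,v]'$; the remaining five mix the two operations but are handled by the same routine expansion.

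The only real work is bookkeeping: each nested product of three generators unfolds, via two applications of the Leibniz rule, into up to eight signed terms, and one must track these carefully. There is no conceptual obstacle, which is exactly why the statement asserts that the identities hold ``by a direct calculation''. As an entirely equivalent alternative one may instead stay inside the Novikov operad: expand both sides of each identity into the twelve multilinear arity-three monomials $(a_{\sigma(1)}a_{\sigma(2)})a_{\sigma(3)}$ and $a_{\sigma(1)}(a_{\sigma(2)}a_{\sigma(3)})$, and reduce them, modulo the left-commutativity identity $x(yz)=y(xz)$ and the associator-symmetry identity $(x,y,z)=(x,z,y)$, to a fixed normal form in the six-dimensional arity-three component (isomorphic to $V_3^2\oplus V_{2,1}^2$); the two normal forms then coincide for each identity. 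I would prefer the differential model, since commutativity removes the need to keep track of bracketings and reduces the entire verification to elementary polynomial algebra.
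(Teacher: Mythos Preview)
Your proposal is correct and is essentially the ``direct calculation'' the paper invokes without details; the paper gives no proof beyond that phrase, so your choice to carry out the verification in the commutative differential model is a perfectly valid and indeed efficient instantiation of the same approach. Your observation that $[u,v]'=u''v-uv''$ is exactly the simplification that makes the six checks routine, and the alternative you sketch (normal forms in the six-dimensional arity-three component of the Novikov operad) is the other natural way to execute the same direct computation.
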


The following three lemmas are proved using the operadic Gr\"obner basis calculator \cite{OpGb}. In each of them, we use the operations $a\cdot b=ab+ba$ and $[a,b]=ab-ba$, and consider the same reverse graded reverse path lexicographic order of monomials, assuming that $[-,-]$ is greater than $-\cdot-$.

\begin{lemma}
The operad $\calQ_{1,-1}$ has the following reduced Gr\"obner basis of relations:
\begin{gather*}
[a_1,[a_2,a_3]],\quad
[[a_1,a_3],a_2],\quad
[[a_1,a_2],a_3],\\
a_1\cdot[a_2,a_3]+1/2[a_1\cdot a_3,a_2]-1/2[a_1\cdot a_2,a_3],\quad
[a_1,a_2\cdot a_3]-2[a_1,a_2]\cdot a_3+[a_1\cdot a_3,a_2],\\
a_1\cdot(a_2\cdot a_3)-(a_1\cdot a_2)\cdot a_3+[a_1,a_2\cdot a_3]-1/2[a_1\cdot a_3,a_2]+1/2[a_1\cdot a_2,a_3],\\
(a_1\cdot a_3)\cdot a_2-(a_1\cdot a_2)\cdot a_3-1/2[a_1\cdot a_3,a_2]+1/2[a_1\cdot a_2,a_3],\\
[a_1,a_3]\cdot a_2-[a_1,a_2]\cdot a_3+1/2[a_1\cdot a_3,a_2]-1/2[a_1\cdot a_2,a_3],\\
[[a_1,a_2]\cdot a_4,a_3],\quad
[[a_1,a_2]\cdot a_3,a_4],\quad
[[a_1,a_3]\cdot a_4,a_2],\\
[a_1\cdot a_3,a_2]\cdot a_4-[a_1\cdot a_2,a_3]\cdot a_4-[(a_1\cdot a_3)\cdot a_4,a_2]+[(a_1\cdot a_2)\cdot a_4,a_3],\\
 [a_1\cdot a_4,a_2]\cdot a_3-[a_1\cdot a_2,a_3]\cdot a_4-[(a_1\cdot a_3)\cdot a_4,a_2]+\frac12[(a_1\cdot a_2)\cdot a_4,a_3]+\frac12[(a_1\cdot a_2)\cdot a_3,a_4],\\
([a_1,a_2]\cdot a_3)\cdot a_4-2[a_1\cdot a_2,a_3]\cdot a_4-[(a_1\cdot a_3)\cdot a_4,a_2]+\frac32[(a_1\cdot a_2)\cdot a_4,a_3]+\frac12[(a_1\cdot a_2)\cdot a_3,a_4].
\end{gather*} 
\end{lemma}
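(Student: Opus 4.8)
The plan is to carry out the computation entirely in the polarized presentation, so the first step is to rewrite the defining relations of $\calQ_{1,-1}$ in terms of the operations $a\cdot b=ab+ba$ and $[a,b]=ab-ba$. The Novikov identities become the six quadratic relations recorded in the preceding lemma, and the extra relation cutting $\calQ_{1,-1}$ out of the Novikov operad, namely the identity defining $\calQ_{\gamma,\delta}$ specialised at $(\gamma:\delta)=(1:-1)$, polarizes to one further quadratic relation among $a\cdot b$ and $[a,b]$. This yields a finite set of quadratic generators for the ideal of relations, each written as a linear combination of shuffle tree monomials built from the two binary generators.

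Next I would fix, for the prescribed reverse graded reverse path lexicographic order with $[-,-]$ greater than $-\cdot-$, the leading term of every quadratic relation and then run the operadic Buchberger algorithm \cite[Sec.~3.5--3.7]{MR3642294}. Concretely, for each pair of current basis elements I form the overlaps of their leading monomials, build the associated S-polynomials, reduce them modulo the current basis, and adjoin every nonzero reduction as a new relation. I expect the quadratic relations alone not to be confluent: the reductions of arity-three overlaps should produce the cubic elements in the statement, and overlaps of those with the quadratic relations should produce the arity-four elements such as $[a_1\cdot a_3,a_2]\cdot a_4-[a_1\cdot a_2,a_3]\cdot a_4-\cdots$. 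Since the candidate basis is finite, Buchberger's criterion is a genuinely finite check: once every S-polynomial coming from the finitely many overlaps among the listed elements reduces to zero, the set is confirmed to be a Gr\"obner basis, and a final interreduction step produces the reduced basis displayed above.

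The main obstacle is precisely this confluence verification, which in arities four and five involves a large number of overlaps and lengthy reductions; performing it by hand is error-prone, which is why I would delegate it to the operadic Gr\"obner basis calculator \cite{OpGb}, exactly as the preamble indicates. As a built-in consistency check I would count the normal monomials, i.e.\ the complement of the leading-term ideal, in each arity: once confluence is established these normal monomials form a genuine basis of $\calQ_{1,-1}(n)$, so their count should equal $n+1$, matching the upper bound obtained in the proof of Theorem~\ref{th:rel2}. This simultaneously confirms that no Gr\"obner basis element has been missed and supplies the exact value $\dim\calQ_{1,-1}(n)=n+1$, thereby establishing the survival of all the claimed modules at the special value $(1:-1)$ that was deferred to this computation.
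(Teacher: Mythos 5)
Your proposal matches the paper's proof: the authors likewise pass to the polarized operations $a\cdot b$ and $[a,b]$, impose the reverse graded reverse path lexicographic order with $[-,-]>-\cdot-$, and verify the displayed reduced Gr\"obner basis by machine computation with the operadic Gr\"obner basis calculator \cite{OpGb}, with the normal-monomial count $\dim\calQ_{1,-1}(n)=n+1$ extracted exactly as in your consistency check (this is the paper's Corollary~\ref{cor:dim}). Your added detail on running Buchberger's algorithm and interreducing is a correct elaboration of what the software does, not a different route.
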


\begin{lemma}
The operad $\calQ_{1,0}$ has the following reduced Gr\"obner basis of relations:
\begin{gather*}
a_1\cdot[a_2,a_3]+[[a_1,a_3],a_2]-[[a_1,a_2],a_3],\quad
[a_1,a_2\cdot b_3]+[a_1\cdot a_2,a_3]+3[[a_1,a_3],a_2],\\
a_1\cdot(a_2\cdot a_3)-(a_1\cdot a_2)\cdot a_3-[[a_1,a_3],a_2],\quad
[a_1,[a_2,a_3]]+[[a_1,a_3],a_2]-[[a_1,a_2],a_3,]\\
[a_1,a_2]\cdot a_3+[[a_1,a_2],a_3],\quad
[a_1,a_3]\cdot a_2+[[a_1,a_3],a_2],\\
(a_1\cdot a_3)\cdot a_2-(a_1\cdot a_2)\cdot a_3+[[a_1,a_3],a_2]-[[a_1,a_2],a_3],\\
[a_1\cdot a_3,a_2]-[a_1\cdot a_2,a_3]+3[[a_1,a_3],a_2]-3[[a_1,a_2],a_3],\\
[[a_1\cdot a_2,a_4],a_3]-[[a_1\cdot a_2,a_3],a_4] -2[[[a_1,a_4],a_2],a_3]+2[[[a_1,a_3],a_2],a_4], \\
[[a_1\cdot a_3,a_4],a_2]-[[a_1\cdot a_2,a_3],a_4]+2[[[a_1,a_4],a_2],a_3]-3[[[a_1,a_3],a_2],a_4]-[[[a_1,a_2],a_3],a_4],\\
[(a_1\cdot a_2)\cdot a_3,a_4]+3[[a_1\cdot a_2,a_3],a_4]-4[[[a_1,a_4],a_2],a_3]+8[[[a_1,a_3],a_2],a_4]-2[[[a_1,a_2],a_3],a_4],\\  
[[[a_1,a_4],a_3],a_2]-[[[a_1,a_4],a_2],a_3],\quad
[[[a_1,a_3],a_4],a_2]-[[[a_1,a_3],a_2],a_4],\\
[[[a_1,a_2],a_4],a_3]-[[[a_1,a_2],a_3],a_4].
\end{gather*} 
\end{lemma}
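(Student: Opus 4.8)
The plan is to verify the claim by the standard machinery of operadic Gr\"obner bases in the shuffle operad, following \cite[Ch.~3]{MR3642294}. First I would pass from the symmetric to the shuffle operad: the polarized presentation of $\calQ_{1,0}$ has one symmetric binary generator $-\cdot-$ and one antisymmetric binary generator $[-,-]$, and after forgetting the symmetric group actions each of these becomes a shuffle generator, while the defining relations---the four polarized relations of the Novikov operad recorded in the Proposition above together with the relation obtained by specializing the $(\gamma:\delta)$-family relation to $(1:0)$---are rewritten as elements of the free shuffle operad on these two generators. I would then fix the prescribed order (reverse graded reverse path lexicographic with $[-,-]>-\cdot-$) and read off the leading shuffle-tree monomial of each generating relation.

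With the set-up in place, I would run the operadic Buchberger completion: for every pair of basis elements whose leading monomials admit a tree overlap, form the associated S-polynomial, reduce it modulo the current basis, and adjoin the nonzero normalized remainders. Since all of the starting relations are quadratic, the first genuinely new elements arise in arity four and account for the cubic entries of the stated list (such as $[[a_1\cdot a_2,a_4],a_3]-\cdots$ and $[[[a_1,a_2],a_4],a_3]-\cdots$); the point is then to check that every remaining overlap among the enlarged basis reduces to zero, so that the completion stops exactly at the listed family and no generator of arity five or higher is needed. Finally I would confirm that the output is \emph{reduced}: every element is monic, no leading monomial divides a monomial occurring in another element, and all tails are normal forms.

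As an independent consistency check I would enumerate the normal monomials---those shuffle-tree monomials divisible by none of the leading terms---in each arity and count them. This count must equal $n+1$, in agreement with the upper bound $\dim\calQ_{1,0}(n)\le n+1$ obtained in the proof of Theorem \ref{th:rel2}; agreement simultaneously certifies that the basis is complete and that $\dim\calQ_{1,0}(n)=n+1$, which is what feeds into Corollary \ref{cor:dim}.

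The main obstacle is purely the size and delicacy of the completion rather than any conceptual difficulty. The leading monomials here are not the naive ones---indeed, as the authors note for the closely related operad $\calS_{\gamma,\delta}$, there need be no quadratic Gr\"obner basis with the expected normal forms---so the basis genuinely extends into arity four and the intermediate S-polynomial reductions are intricate. Verifying termination, i.e.\ that all arity-four and arity-five overlaps reduce to zero modulo the listed set, is the crux; this is exactly the bookkeeping that is delegated to the operadic Gr\"obner basis calculator \cite{OpGb}, and a by-hand proof would simply reproduce these (routine but voluminous) reductions.
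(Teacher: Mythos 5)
Your proposal matches the paper's proof in essence: the paper establishes this lemma (like the two neighboring ones) precisely by running the operadic Gr\"obner basis calculator \cite{OpGb} on the polarized presentation with the stated reverse graded reverse path lexicographic order in which $[-,-]$ is greater than $-\cdot-$, and the dimension count via normal monomials is exactly what the paper then does in Corollary~\ref{cor:dim}. The only small imprecision is peripheral: once Buchberger completion terminates with all S-polynomials reducing to zero (for a basis with quadratic and cubic elements this requires checking overlaps up to arity six, not five), the basis is already certified, so the normal-monomial count of $n+1$ serves as a consistency check and dimension computation rather than a completeness certificate.
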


\begin{lemma}
The operad $\calQ_{0,1}$ has the following reduced Gr\"obner basis of relations:
\begin{gather*}
a_1\cdot [a_2,a_3]-[[a_1,a_3],a_2]+[[a_1,a_2],a_3],\quad
[a_1,a_2\cdot a_3]-[[a_1,a_3],a_2]+[a_1\cdot a_2,a_3],\\
a_1\cdot (a_2\cdot a_3)+[[a_1,a_3],a_2]-(a_1\cdot a_2)\cdot a_3,\quad
[a_1,[a_2,a_3]]+[[a_1,a_3],a_2]-[[a_1,a_2],a_3],\\
[a_1,a_2]\cdot a_3 - [[a_1,a_2],a_3],\quad
[a_1,a_3]\cdot a_2 - [[a_1,a_3],a_2],\\
(a_1\cdot a_3)\cdot a_2-(a_1\cdot a_2)\cdot a_3+[[a_1,a_3],a_2]-[[a_1,a_2],a_3],\\
[a_1\cdot a_3,a_2]-[a_1\cdot a_2,a_3]+[[a_1,a_3],a_2]-[[a_1,a_2],a_3],\\
[[a_1\cdot a_3,a_4],a_2]-[[a_1\cdot a_2,a_3],a_4]+[[[a_1,a_3],a_2],a_4]-[[[a_1,a_2],a_3],a_4],\\
[[a_1\cdot a_2,a_4],a_3]-[[a_1\cdot a_2,a_3],a_4],\quad
[(a_1\cdot a_2)\cdot a_3,a_4]-[[a_1\cdot a_2,a_3],a_4], \\
[[[a_1,a_4],a_3],a_2]-[[[a_1,a_4],a_2],a_3],\quad
[[[a_1,a_3],a_4],a_2]-[[[a_1,a_3],a_2],a_4],\\
[[[a_1,a_2],a_4],a_3]-[[[a_1,a_2],a_3],a_4].
\end{gather*} 
\end{lemma}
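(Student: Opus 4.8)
The plan is to certify that the displayed set $G$ is the reduced Gröbner basis by passing to the shuffle operad associated to $\calQ_{0,1}$ and applying the operadic Buchberger criterion. First I would record the defining relations of $\calQ_{0,1}$ in the polarized presentation: one starts from the polarized presentation of the Novikov operad recorded above and adjoins the polarized form of the defining relation of $\calQ_{0,1}$, i.e.\ the image of $2a_1(a_2a_3)-a_3(a_1a_2)-a_3(a_2a_1)$ under $ab=\tfrac12(a\cdot b+[a,b])$. I would then forget the symmetric group actions, expand every relation into shuffle tree monomials in the two binary generators $-\cdot-$ and $[-,-]$, and fix the reverse graded reverse path lexicographic order with $[-,-]$ above $-\cdot-$; this determines the leading monomial of each relation unambiguously.

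With the order fixed, the heart of the argument is the completion procedure. I would form every critical pair arising from overlaps of leading shuffle tree monomials and reduce the corresponding S-polynomials modulo the current basis. The eight arity-three relations are obtained directly from the defining relations after triangulating their leading terms; the arity-four elements in the list---those of shape $[[a_1\cdot a_3,a_4],a_2]-\cdots$ together with the iterated-bracket relations $[[[a_1,a_4],a_3],a_2]-[[[a_1,a_4],a_2],a_3]$ and its companions---appear as the reduced remainders of the arity-four S-polynomials. The decisive claim is that the procedure closes at arity four: every critical pair built from the arity-four leading terms lives in arity five and must reduce to zero modulo $G$, which is exactly what makes $G$ both finite and complete.

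Rather than verifying each arity-five reduction by hand, I would certify completeness through a dimension cross-check, which is the cleaner route. Each element of $G$ is visibly an operadic consequence of the defining relations (checkable in arity three directly, and in arity four after a single composition), so the ideal $\langle G\rangle$ is contained in the relation ideal and the quotient by $\langle G\rangle$ surjects onto $\calQ_{0,1}$. By Theorem~\ref{th:rel2} we have $\dim\calQ_{0,1}(n)=n+1$, and since the forgetful functor to shuffle operads preserves underlying dimensions, the monomials in normal form with respect to the leading terms of $G$ span a space of dimension at least $n+1$ in each arity. It then suffices to count these normal monomials and confirm the count is exactly $n+1$: this collapses all the inequalities, forcing $\langle G\rangle$ to equal the relation ideal and the normal monomials to be a basis, hence $G$ to be a Gröbner basis. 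A final inspection that no leading term divides another and that every trailing term is already in normal form yields the reduced basis.

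I expect the main obstacle to be precisely this monomial bookkeeping rather than any single reduction. Operadic Gröbner bases are routinely infinite, so there is no a priori reason for closure at arity four, and establishing that the admissible shuffle tree monomials number exactly $n+1$ for all $n$ requires pinning down the divisibility pattern cut out by the arity-four leading terms and running an induction (or generating-function) argument on the surviving trees. This combinatorial count, together with the enumeration of arity-five critical pairs that underlies it, is where the real work lies, and in practice it is the computation performed by the operadic Gröbner basis calculator~\cite{OpGb}, so the proof reduces to certifying that its output is the set $G$ displayed above.
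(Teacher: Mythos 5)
There is a genuine gap: your certification step is circular within the logic of the paper. You propose to avoid checking the arity-five S-polynomials by a dimension cross-check, taking $\dim\calQ_{0,1}(n)=n+1$ from Theorem~\ref{th:rel2} as known. But for the case $(\gamma:\delta)=(0:1)$ the proof of Theorem~\ref{th:rel2} only establishes the \emph{upper} bound $\dim\calQ_{0,1}(n)\le n+1$ by hand (via Lemmas~\ref{lm:symmetryII} and~\ref{lm:21conseq}); the \emph{lower} bound --- the survival of the two copies of $V_n$ and the copy of $V_{n-1,1}$ --- is deduced there from Corollary~\ref{cor:dim}, which is itself a consequence of the very lemma you are proving. (The explicit algebras $A$ and $B_\delta$ are used only for $\delta\notin\{0,-1\}$ with $\gamma\ne 0$; no independent witness is given for $(0:1)$.) And note that your collapse argument genuinely needs the lower bound: from the surjection of the quotient by $\langle G\rangle$ onto $\calQ_{0,1}$ and the count of $n+1$ normal monomials, the inequalities
\[
n+1=\#\{\text{normal monomials}\}\ \ge\ \dim\bigl(\calF/\langle G\rangle\bigr)(n)\ \ge\ \dim\calQ_{0,1}(n)
\]
only collapse if one already knows $\dim\calQ_{0,1}(n)\ge n+1$, which is unavailable without the lemma.

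The paper's actual proof runs in the opposite direction and needs no dimension input: the displayed set is certified as a Gr\"obner basis by direct Buchberger completion in the shuffle setting, carried out by the operadic Gr\"obner basis calculator~\cite{OpGb} --- one verifies that every S-polynomial of the listed relations reduces to zero, a finite check since the critical pairs of the arity-three and arity-four leading terms live in arities four and five. The dimension count $n+1$ then \emph{follows} (Corollary~\ref{cor:dim}) by enumerating normal monomials, and is fed back into Theorem~\ref{th:rel2}. Your outline of the completion procedure (fixing the order, triangulating the eight arity-three relations, obtaining the arity-four remainders, and observing closure at arity four) is exactly right and matches what the software verifies; to repair the proof you should either perform (or cite the machine verification of) the arity-five reductions directly, or else supply an independent lower bound $\dim\calQ_{0,1}(n)\ge n+1$, e.g.\ by exhibiting explicit algebras in the variety on which the three candidate identities do not vanish, in the spirit of the $A$ and $B_\delta$ arguments used for generic $(\gamma:\delta)$.
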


\begin{corollary}\label{cor:dim}
The dimension of the components $\calQ_{1,-1}(n)$, $\calQ_{1,0}(n)$, and $\calQ_{0,1}(n)$ is $n+1$. 
\end{corollary}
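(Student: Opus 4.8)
The plan is to deduce the dimension directly from the three reduced Gr\"obner bases exhibited in the preceding lemmas, using the standard fact (see \cite{MR3642294}) that once a set of relations is known to be a Gr\"obner basis, the shuffle tree monomials that are \emph{not} divisible by the leading monomial of any basis element form a linear basis of the corresponding shuffle operad. Since the arity $n$ component of a shuffle operad has the same dimension as the arity $n$ component of the underlying symmetric operad, it suffices to show that in each of the three cases the number of such normal monomials in arity $n$ equals $n+1$.

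First I would read off the leading monomials from each of the three lists. In all three cases these occur only in arities three and four (eight quadratic and six cubic leading monomials), so the set of normal monomials is cut out by a finite list of forbidden tree patterns and the counting problem becomes purely combinatorial. I would begin with the quadratic leading terms. For instance, for $\calQ_{1,-1}$ the four leading monomials of the shape $m\bigl(x_1,m'(x_2,x_3)\bigr)$ (namely $a_1\cdot(a_2\cdot a_3)$, $a_1\cdot[a_2,a_3]$, $[a_1,a_2\cdot a_3]$ and $[a_1,[a_2,a_3]]$) forbid any internal vertex from having an internal right child, regardless of labels; hence every normal monomial must be \emph{left-combed}. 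The remaining quadratic leading terms then translate into local conditions on each pair of adjacent vertices along the left spine: a constraint on the two labels together with the relative order of the leaf grafted at the upper vertex and the right leaf of the lower one (the shuffle condition already forces the bottom-left leaf to be the global minimum, so only these local comparisons remain free).

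Next I would incorporate the six cubic leading monomials, which rule out the finitely many exceptional three-vertex configurations not already excluded, and then characterize the surviving normal monomials explicitly. I expect them to form a one-parameter family of left combs --- for example combs that are everywhere $-\cdot-$ except possibly for a single $[-,-]$ whose position and attached leaf are constrained --- indexed so that there are exactly $n+1$ of them. The cleanest way to finish is an induction on arity: pruning the top vertex of a normal monomial of arity $n$ yields a normal monomial of arity $n-1$, and the admissible ways of re-attaching a top vertex (a label together with an allowed leaf) are counted by the local conditions above; checking the base case $n=3$ (four normal monomials in each case, matching $3+1$) together with the recursion gives $n+1$ in general. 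The three operads would be treated in parallel: although their leading-monomial \emph{sets} differ, the arity-wise cardinalities coincide, and the same bookkeeping applies. As an independent check one can compare with the dimensions returned by the Gr\"obner basis calculator \cite{OpGb} and with the bounds already obtained for generic parameters in the proof of Theorem~\ref{th:rel2}.

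The hard part will be the second and third steps: correctly determining the leading monomials for the reverse graded reverse path-lexicographic order (in particular which term of each relation is leading), and faithfully translating the quadratic and cubic leading terms --- with their delicate leaf-order conditions --- into pattern-avoidance rules on left-combed shuffle trees, so that the enumeration provably yields exactly $n+1$ survivors in every arity and for every one of the three operads. By contrast, once the admissible family is pinned down, the actual count of $n+1$ is a routine induction.
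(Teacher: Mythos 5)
Your proposal follows essentially the same route as the paper: its proof of Corollary~\ref{cor:dim} is exactly the normal-monomial count for the three reduced Gr\"obner bases, observing that the leading terms of right-comb shape force every normal form to be a left comb and then listing the $n+1$ surviving monomials in each arity. One small correction to your guessed picture: ``everywhere $-\cdot-$ with at most one $[-,-]$'' is accurate only for $\calQ_{1,-1}$; for $\calQ_{1,0}$ and $\calQ_{0,1}$ (whose leading-monomial sets coincide) the normal forms are instead the all-$\cdot$ left comb, the $n-1$ iterated-bracket combs of the shape $[\cdots[[[a_1,a_i],a_2],a_3],\cdots,a_n]$ for $2\le i\le n$, and the single comb $[[\cdots[(a_1\cdot a_2),a_3],\cdots],a_n]$ with one $\cdot$ at the bottom --- still $n+1$ in total, as your parallel-treatment remark anticipated.
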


\begin{proof}
Let us start with $\calQ_{1,-1}(n)$. The first four elements of the Gr\"obner basis ensure that all normal forms are left combs. Examining the other relations, we see that the normal monomials of arity at least $4$ are precisely the following ones:
\begin{gather*}
(\cdots (a_1\cdot a_2)\cdots a_{n-1})\cdot a_n,\\
[(\cdots((\cdots (a_1\cdot a_2)\cdots a_{i-1})\cdot a_i)\cdots a_n),a_i], \quad 2\le i\le n,\\
[(\cdots(\cdots (a_1\cdot a_2)\cdots )\cdot a_{n-2}),a_{n-1}]\cdot a_n,
\end{gather*}
and there are exactly $n+1$ of them.

The leading terms of the Gr\"obner bases of $\calQ_{1,0}$ and $\calQ_{0,1}$ are the same, so we shall prove these two statements simultaneously. 
There are four relations that ensure that all normal forms are left combs. Examining the other relations, we see that the normal monomials of arity at least $4$ are precisely the following ones:

\begin{gather*}
(\cdots (a_1\cdot a_2)\cdots a_{n-1})\cdot a_n,\\
[\cdots[[\cdots [[[a_1,a_i],a_2],a_3],\cdots,a_{i-1}],a_{i+1}],\cdots,a_n], \quad 2\le i\le n,\\
[[\cdots [(a_1\cdot a_2),a_3],\cdots a_{n-1}], a_n],
\end{gather*}
and there are exactly $n+1$ of them.
\end{proof}

\printbibliography

\end{document}